\documentclass{article}
\usepackage{mathtools}
\usepackage{amsfonts}
\usepackage{amssymb}
\usepackage[utf8]{inputenc}
\usepackage[english]{babel}
\usepackage{amsthm}
\usepackage{cancel}
\usepackage{comment}

\usepackage{titlesec}
\setcounter{secnumdepth}{2}
\titleformat{\paragraph}
{\normalfont\normalsize\bfseries}{\theparagraph}{1em}{}
\titlespacing*{\paragraph}
{0pt}{3.25ex plus 1ex minus .2ex}{1.5ex plus .2ex}

\usepackage[pdftex,pdfstartview=FitH,pdfborderstyle={/S/B/W 1}, %
colorlinks=true, linkcolor=blue, urlcolor=blue,citecolor=blue,%
pagebackref=true]{hyperref}

\theoremstyle{plain}
\newtheorem{theorem}{Theorem}
\numberwithin{theorem}{section}
\newtheorem{corollary}[theorem]{Corollary}
\newtheorem{proposition}[theorem]{Proposition}

\newtheorem{conv}[theorem]{Convention}

\theoremstyle{definition}
\newtheorem{definition}[theorem]{Definition}

\theoremstyle{remark}

\numberwithin{equation}{section}
\usepackage{graphicx}
\usepackage{pgfplots}
\usetikzlibrary{intersections}
\usetikzlibrary{calc}

\begin{document}

\title{RECURRENCE AND EMBEDDINGS IN PLANAR WIND-TREE MODELS}
\author{Chen Frenkel}
\date{}
\maketitle

We study periodic infinite billiards in the plane. We show that for rational models, some particular obstacles can be added periodically,
so that the billiard flow in the resulting table is recurrent in almost every direction. \\

\section{Introduction}

The subject of billiard dynamics has a long history and has been broadly studied. It is usually modeled on a `rational' polygonal table, where all the angles are rational multiples of $\pi$.
In that setting, the dynamical behavior is rather well understood and `nicely behaved': upon picking a starting point, the billiard flow is ergodic in almost every direction.

If we drop the assumption that the table is compact, then the picture is different. 
For example, we can construct an infinite table by taking the whole $\mathbb{R}^{2}$ plane, and placing (infinitely many) polygonal obstacles on it.
In this case, there isn't much we know about the billiard flow.

The classical wind-tree model was introduced long ago, in 1912 \cite{EhEh}.
As suggested, it is constructed by taking the plane $\mathbb{R}^{2}$, and placing identical rectangular obstacles periodically along the $\mathbb{Z}^{2}$-lattice. 
Surprisingly, only recently several results shed some light on the dynamics in this model.
These developments were achieved studying a different field, which initially seemed unrelated, dynamics on moduli spaces of flat surfaces.

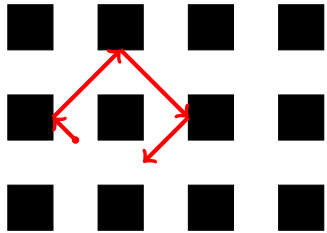
\begin{figure}[htbp]
\centering
\begin{tikzpicture}[every node/.style = {opacity = 1, black, above left}, scale=0.6]

\coordinate (rec1) at (0,0);
\draw [name path=rectangle1, fill=black]  (rec1) rectangle +(1,1);
\coordinate (rec2) at (2,0);
\draw [name path=rectangle2, fill=black]  (rec2) rectangle +(1,1);
\coordinate (rec3) at (4,0);
\draw [name path=rectangle3, fill=black]  (rec3) rectangle +(1,1);
\coordinate (rec4) at (6,0);
\draw [name path=rectangle4, fill=black]  (rec4) rectangle +(1,1);

\coordinate (rec5) at (0,-2);
\draw [name path=rectangle1, fill=black]  (rec5) rectangle +(1,1);
\coordinate (rec6) at (2,-2);
\draw [name path=rectangle2, fill=black]  (rec6) rectangle +(1,1);
\coordinate (rec7) at (4,-2);
\draw [name path=rectangle3, fill=black]  (rec7) rectangle +(1,1);
\coordinate (rec8) at (6,-2);
\draw [name path=rectangle4, fill=black]  (rec8) rectangle +(1,1);

\coordinate (rec9) at (0,-4);
\draw [name path=rectangle1, fill=black]  (rec9) rectangle +(1,1);
\coordinate (rec10) at (2,-4);
\draw [name path=rectangle2, fill=black]  (rec10) rectangle +(1,1);
\coordinate (rec11) at (4,-4);
\draw [name path=rectangle3, fill=black]  (rec11) rectangle +(1,1);
\coordinate (rec12) at (6,-4);
\draw [name path=rectangle4, fill=black]  (rec12) rectangle +(1,1);

\filldraw [red] (rec5) +(1.5,0) circle (2pt);
\draw [->,red,ultra thick] (rec5) +(1.5,0) -- +(1,0.5);
\draw [->,red,ultra thick] (rec5) +(1,0.5) -- +(2.5,2);
\draw [->,red,ultra thick] (rec2) +(0.5,0) -- +(2,-1.5);
\draw [->,red,ultra thick] (rec7) +(0,0.5) -- +(-1,-0.5);

\end{tikzpicture}
\caption{The original wind-tree model.}
\end{figure}

The wind-tree model is parameterized by the dimensions of the obstacles.
One chooses the parameters, a starting point $p$, and looks at the billiard flow $\mathcal{F}_{t}^{\theta}(p)$ in directions $\theta \in [0,2\pi)$.

Depending on the choice of specific parameters or directions, one can find all kind of dynamical properties.
For example \cite{HLT} it was shown that there are dense $G_{\delta}$ sets of parameters and directions (upon choosing $p$), for which $\mathcal{F}_{t}^{\theta}(p)$ is recurrent. That is, always returns to sets of positive measure.

In this work we want to study the general behavior of the billiard flow.
We look for results that hold for any choice of the parameters of the model, any starting point, and for Lebesgue almost every $\theta \in [0,2\pi)$.
Looking at the general behavior, rather surprising results were found: 

The billiard flow was shown to be non-ergodic \cite{FU}.
In \cite{DHL} the diffusion rate was found to be $\limsup _{t \rightarrow+\infty} \frac{\log d\left(p, \mathcal{F}_{t}^{\theta}(p)\right)}{\log t} = \frac{2}{3}$.
These results show that the billiard flow on the wind-tree model exhibits rather interesting, and somewhat unexpected, dynamical behaviors.

Nonetheless, there came a positive result in \cite{AH}, in which it is shown that the flow is recurrent.
The recurrence property will be the main focus of our paper.

\vspace{3mm}

\noindent
Some other wind-tree models have been considered. In \cite{FH}, recurrence was shown for models where the obstacles are periodic w.r.t.
a lattice of the form $(1, \lambda)  \mathbb{Z}+(0,1) \mathbb{Z}$. In \cite{Pa}, recurrence was established for models with right angled central-symmetric obstacles.
We will take the study to a much more general setting.

\vspace{5mm}

\noindent
In the present work, we tackle the recurrence problem for general periodic billiard models in the plane, which we will also call {\em planar wind-tree models}. 
A planar wind-tree model can be parameterized as $\mathrm{WT}(\Lambda, \Omega)$ using a lattice $\Lambda$ in $\mathbb{R}^2$ and a configuration $\Omega$. 
The configuration $\Omega \subset \mathbb{R}^2$ is of the form $\Omega = \overline{F \backslash \{O_i\}_{i=1}^{r_{\mathrm{o}}}}$ where $F$ is some chosen fundamental domain for $\Lambda$, and $O_i$ are disjoint polygonal obstacles (with connected boundary).
Then, the configuration $\Omega$ is placed periodically in the plane w.r.t.  $\Lambda$.

Note that the parameterization is not unique, and different choices of $\Omega$ are possible.
Like in the compact setting, we will confine the discussion to `rational' models as follows:
\label{rational:config:intro}
Write $e(\Omega) = (e_1, \dots, e_{r_e})$ for the edges of all the polygons in $\{O_i\}_{i=1}^{r_{\mathrm{o}}}$
and $\xi(\Omega) = (\xi_1, \dots, \xi_{r_e})$ for the angles $\xi_i \in[0, \pi)$ they make with any horizontal axis.
Then the model $\mathrm{WT}(\Lambda, \Omega)$ is called {\em rational} if all the differences $\xi_{i}-\xi_{j}$ are of the form $\frac{m}{n} \pi,$ with $n \in \mathbb{N}, m \in \mathbb{Z}$.

\vspace{3mm}

\noindent
We will add new obstacles to $\Omega$ to create a new configuration \\
$\widehat{\Omega}=\overline{F \backslash \{O_i\}_{i=1}^{r_{\mathrm{o}}+\widehat{r_{\mathrm{o}}}}}$, and discuss the resulting wind-tree model $\mathrm{WT}(\Lambda, \widehat{\Omega})$.\\
Then we show:

\begin{theorem}
\label{thm:main}
For any rational wind-tree model $\mathrm{WT}(\Lambda, \Omega)$ one can
add obstacles to $\Omega$ so that for the new configuration $\widehat{\Omega}$ the billiard flow in $\mathrm{WT}(\Lambda, \widehat{\Omega})$ is recurrent.
\end{theorem}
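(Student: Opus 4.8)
The plan is to pass from billiards to translation surfaces by unfolding, and then to reduce recurrence of the billiard flow to recurrence of a linear flow on a $\mathbb{Z}^2$-cover of a compact translation surface, to which a criterion in the spirit of Avila--Hubert can be applied. First I would unfold the rational table: since the model is rational, the group generated by the reflections in the edge directions of $\Omega$ together with those of the added obstacles is a finite dihedral group $D_N$, so unfolding a single fundamental tile produces a compact translation surface $S = S(\widehat{\Omega})$, and the whole table $\mathrm{WT}(\Lambda,\widehat{\Omega})$ unfolds to an infinite translation surface $\widehat{S}$ that is a $\mathbb{Z}^2$-cover of $S$, because $\Lambda \cong \mathbb{Z}^2$. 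Under this correspondence the billiard flow in a given direction $\theta$ is recurrent exactly when the translation flow on $\widehat{S}$ in the corresponding direction is recurrent, so it suffices to produce recurrence in Lebesgue-almost every direction for the cover $\widehat{S}\to S$.

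The cover is encoded by the holonomy of the two generators of $\Lambda$, i.e.\ by a pair of relative cohomology classes $[\eta_1],[\eta_2]$ on $S$. The recurrence criterion I intend to use states, roughly, that the flow on the $\mathbb{Z}^2$-cover is recurrent in almost every direction provided that: (i) $S$ returns to a compact part of its stratum under the Teichm\"uller geodesic flow, which is automatic by Poincar\'e recurrence since the ambient moduli space has finite volume; and (ii) the Kontsevich--Zorich cocycle, restricted to the plane spanned by $[\eta_1],[\eta_2]$, has vanishing Lyapunov exponents along almost every geodesic, so that the displacement cocycle grows subpolynomially. Conditions (i) and (ii) together upgrade Poincar\'e recurrence of the base dynamics to recurrence of the extension.

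The role of the added obstacles is precisely to force condition (ii). The key is to choose the new obstacles $\{O_i\}_{i=r_{\mathrm{o}}+1}^{r_{\mathrm{o}}+\widehat{r_{\mathrm{o}}}}$ so that $S$ acquires a central symmetry, i.e.\ an order-two translation automorphism, under which the classes $[\eta_1],[\eta_2]$ are anti-invariant: placing the new obstacles symmetrically inside a centrally symmetric choice of $F$ makes the displacement cocycle odd, so its classes lie in the $(-1)$-eigenspace of the induced action on cohomology, and on that anti-invariant subspace a Forni-type argument using the isotypic decomposition of the cocycle gives the vanishing of the relevant exponents. If needed, I would add further obstacles to pin down the $SL(2,\mathbb{R})$-orbit closure of $S$ --- for instance cutting $\Omega$ along a rational grid so that $S$ becomes square-tiled, hence Veech, which makes the orbit closed and the cocycle structure completely explicit and is the cleanest way to verify (ii). This is where the \emph{embedding} of the title enters: the construction realizes the given model inside a symmetric, well-understood surface.

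The main obstacle is Step (ii): arranging, by adding only legitimate disjoint polygonal obstacles inside $F$ that keep the model rational, that the holonomy classes defining the cover land in a subspace on which the Kontsevich--Zorich cocycle has vanishing Lyapunov exponents. Making the symmetry act correctly on $[\eta_1],[\eta_2]$ themselves, rather than merely on $S$, is delicate, as is ensuring that the new obstacles introduce no unexpected singular directions and do not enlarge the zero-exponent subspace away from the classes we need. Verifying that the prescribed obstacles genuinely realize the desired symmetric (or square-tiled) surface, while remaining embedded polygons with connected boundary inside $F$, is the technical heart of the argument.
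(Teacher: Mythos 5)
Your reduction --- unfold the rational table to a compact translation surface and realize $\mathrm{WT}(\Lambda,\widehat{\Omega})$ as a $\mathbb{Z}^2$-cover, so that it suffices to prove recurrence of the linear flow on the cover in almost every direction --- is exactly the paper's first step, and your instinct that the added obstacles should create an order-two symmetry is also the right one. But the core of your argument has a genuine gap: you claim that placing the new obstacles symmetrically forces the Kontsevich--Zorich cocycle to have \emph{vanishing Lyapunov exponents} on the plane spanned by the classes defining the cover, because those classes lie in an eigenspace of the induced symmetry. Membership in an isotypic component of a finite group of affine automorphisms only yields a KZ-\emph{invariant} subbundle; it implies nothing about vanishing of exponents. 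The classical Ehrenfest wind-tree model is a decisive counterexample: that configuration has all the central symmetry you ask for, the two classes defining the $\mathbb{Z}^2$-cover lie in two-dimensional invariant isotypic subbundles, and Delecroix--Hubert--Leli\`evre \cite{DHL} show the top exponent there equals $2/3\neq 0$ --- indeed this nonzero exponent is precisely the diffusion rate quoted in the introduction. Your fallback does not repair this: you cannot make the unfolded surface square-tiled by adding obstacles (adding obstacles does not change the holonomies of the original edges, which need not be commensurable), and square-tiledness would not help anyway, since rational wind-trees are square-tiled and still have exponent $2/3$. A secondary soft spot is that even granting (ii), passing from vanishing exponents to recurrence needs Forni-type deviation bounds in almost every direction plus a Schmidt/Conze-type recurrence theorem for $\mathbb{Z}^2$-valued cocycles with $o(T^{1/2})$ growth; these exist but are only gestured at.

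The paper's proof avoids Lyapunov exponents entirely, and this is the idea your proposal is missing. Adding two small L-shaped obstacles, rotated by $\xi$ and $\xi+\pi$ for some $\xi\in\xi_{\text{diff}}(\Omega)$, makes the unfolding constant even, so the dihedral group contains the rotation by $\pi$, an involution $\iota\in\text{Aff}(X,\Sigma)$. This produces a KZ-invariant symplectic splitting $H_{g}^{1}=H_1^{\iota+}\oplus H_1^{\iota-}$ (Corollary~\ref{cor:quad:hodge:split}); one checks that the cover classes $\overline{\gamma}$ satisfy the no-drift condition and are $\iota$-invariant (they land in $H_1^{\iota+}$, not the anti-invariant part as in your proposal). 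The decisive point is then the inner corners $s_1,s_2$ of the two L's: in $X$ they become regular marked points of cone angle $4\cdot\frac{\pi}{2}=2\pi$, and the straight segment joining them, concatenated with its $\iota$-image, is a closed geodesic whose homology class is $\iota$-\emph{anti}-invariant, hence the core curve of a ``good cylinder'' lying in $H_1^{\iota-}$. The Avila--Hubert criterion combined with Chaika--Eskin Birkhoff genericity (Theorem~\ref{rec:criterion}) then gives recurrence in almost every direction, with no control on exponents required. In short, what must be engineered by the added obstacles is not a degenerate Lyapunov spectrum (which no known construction of this kind can force) but a cylinder transverse, in the symplectic splitting, to the subspace carrying the cover.
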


\vspace{2mm}

\noindent
The embedded obstacles in the proof of Theorem~\ref{thm:main} will be the same for all models, and will consist of a pair of $L$-shaped polygons.
Yet, their size, orientation and placement will depend on the specific model (though they can vary).

\begin{figure}[htbp]
\centering

\tikzset{every picture/.style={line width=0.75pt}} 

\begin{tikzpicture}[ scale=0.6, x=0.75pt,y=0.75pt,yscale=-1,xscale=1]

\draw  [fill={rgb, 255:red, 0; green, 0; blue, 0 }  ,fill opacity=1 ] (151,21) -- (211,80.33) -- (151,80.33) -- cycle ;
\draw  [color={rgb, 255:red, 0; green, 0; blue, 0 }  ,draw opacity=1 ][fill={rgb, 255:red, 0; green, 0; blue, 0 }  ,fill opacity=1 ] (251,21) -- (311,80.33) -- (251,80.33) -- cycle ;
\draw  [fill={rgb, 255:red, 0; green, 0; blue, 0 }  ,fill opacity=1 ] (351,21) -- (411,80.33) -- (351,80.33) -- cycle ;
\draw  [color={rgb, 255:red, 0; green, 0; blue, 0 }  ,draw opacity=1 ][fill={rgb, 255:red, 0; green, 0; blue, 0 }  ,fill opacity=1 ] (451,21) -- (511,80.33) -- (451,80.33) -- cycle ;
\draw  [fill={rgb, 255:red, 23; green, 5; blue, 5 }  ,fill opacity=1 ] (151,111) -- (211,170.33) -- (151,170.33) -- cycle ;
\draw  [fill={rgb, 255:red, 0; green, 0; blue, 0 }  ,fill opacity=1 ] (251,111) -- (311,170.33) -- (251,170.33) -- cycle ;
\draw  [fill={rgb, 255:red, 0; green, 0; blue, 0 }  ,fill opacity=1 ] (351,111) -- (411,170.33) -- (351,170.33) -- cycle ;
\draw  [fill={rgb, 255:red, 0; green, 0; blue, 0 }  ,fill opacity=1 ] (451,111) -- (511,170.33) -- (451,170.33) -- cycle ;
\draw  [fill={rgb, 255:red, 0; green, 0; blue, 0 }  ,fill opacity=1 ] (251,351) -- (311,410.33) -- (251,410.33) -- cycle ;
\draw  [line width=1.5]  (293,351.29) -- (308,351.29) -- (308,364.79) ;
\draw [color={rgb, 255:red, 255; green, 14; blue, 14 }  ,draw opacity=1 ][fill={rgb, 255:red, 172; green, 15; blue, 34 }  ,fill opacity=1 ][line width=1.5]    (321,411.29) -- (346.78,392.37) ;
\draw [shift={(350,390)}, rotate = 503.72] [fill={rgb, 255:red, 255; green, 14; blue, 14 }  ,fill opacity=1 ][line width=0.08]  [draw opacity=0] (9.29,-4.46) -- (0,0) -- (9.29,4.46) -- cycle    ;
\draw [color={rgb, 255:red, 255; green, 14; blue, 14 }  ,draw opacity=1 ][fill={rgb, 255:red, 172; green, 15; blue, 34 }  ,fill opacity=1 ][line width=1.5]    (349,390) -- (312.21,362.67) ;
\draw [shift={(309,360.29)}, rotate = 396.61] [fill={rgb, 255:red, 255; green, 14; blue, 14 }  ,fill opacity=1 ][line width=0.08]  [draw opacity=0] (9.29,-4.46) -- (0,0) -- (9.29,4.46) -- cycle    ;
\draw  [line width=1.5]  (300.5,371.54) -- (285.5,371.54) -- (285.5,358.04) ;
\draw [color={rgb, 255:red, 255; green, 14; blue, 14 }  ,draw opacity=1 ][fill={rgb, 255:red, 172; green, 15; blue, 34 }  ,fill opacity=1 ][line width=1.5]    (309,360.29) -- (334.78,341.37) ;
\draw [shift={(338,339)}, rotate = 503.72] [fill={rgb, 255:red, 255; green, 14; blue, 14 }  ,fill opacity=1 ][line width=0.08]  [draw opacity=0] (9.29,-4.46) -- (0,0) -- (9.29,4.46) -- cycle    ;
\draw  [fill={rgb, 255:red, 0; green, 0; blue, 0 }  ,fill opacity=1 ] (351,351) -- (411,410.33) -- (351,410.33) -- cycle ;
\draw  [line width=1.5]  (393,351.29) -- (408,351.29) -- (408,364.79) ;
\draw  [line width=1.5]  (400.5,371.54) -- (385.5,371.54) -- (385.5,358.04) ;
\draw  [fill={rgb, 255:red, 0; green, 0; blue, 0 }  ,fill opacity=1 ] (351,261) -- (411,320.33) -- (351,320.33) -- cycle ;
\draw  [line width=1.5]  (393,261.29) -- (408,261.29) -- (408,274.79) ;
\draw  [line width=1.5]  (400.5,281.54) -- (385.5,281.54) -- (385.5,268.04) ;
\draw  [fill={rgb, 255:red, 0; green, 0; blue, 0 }  ,fill opacity=1 ] (450,261) -- (510,320.33) -- (450,320.33) -- cycle ;
\draw  [line width=1.5]  (492,261.29) -- (507,261.29) -- (507,274.79) ;
\draw  [line width=1.5]  (499.5,281.54) -- (484.5,281.54) -- (484.5,268.04) ;
\draw  [fill={rgb, 255:red, 0; green, 0; blue, 0 }  ,fill opacity=1 ] (451,351) -- (511,410.33) -- (451,410.33) -- cycle ;
\draw  [line width=1.5]  (493,351.29) -- (508,351.29) -- (508,364.79) ;
\draw  [line width=1.5]  (500.5,371.54) -- (485.5,371.54) -- (485.5,358.04) ;
\draw  [color={rgb, 255:red, 252; green, 15; blue, 15 }  ,draw opacity=1 ][fill={rgb, 255:red, 255; green, 0; blue, 4 }  ,fill opacity=1 ] (319,411.5) .. controls (319,410.12) and (320.12,409) .. (321.5,409) .. controls (322.88,409) and (324,410.12) .. (324,411.5) .. controls (324,412.88) and (322.88,414) .. (321.5,414) .. controls (320.12,414) and (319,412.88) .. (319,411.5) -- cycle ;
\draw [color={rgb, 255:red, 255; green, 14; blue, 14 }  ,draw opacity=1 ][fill={rgb, 255:red, 172; green, 15; blue, 34 }  ,fill opacity=1 ][line width=1.5]    (322,171.29) -- (347.78,152.37) ;
\draw [shift={(351,150)}, rotate = 503.72] [fill={rgb, 255:red, 255; green, 14; blue, 14 }  ,fill opacity=1 ][line width=0.08]  [draw opacity=0] (9.29,-4.46) -- (0,0) -- (9.29,4.46) -- cycle    ;
\draw [color={rgb, 255:red, 255; green, 14; blue, 14 }  ,draw opacity=1 ][fill={rgb, 255:red, 172; green, 15; blue, 34 }  ,fill opacity=1 ][line width=1.5]    (351,150) -- (264.18,83.76) ;
\draw [shift={(261,81.33)}, rotate = 397.34000000000003] [fill={rgb, 255:red, 255; green, 14; blue, 14 }  ,fill opacity=1 ][line width=0.08]  [draw opacity=0] (9.29,-4.46) -- (0,0) -- (9.29,4.46) -- cycle    ;
\draw  [color={rgb, 255:red, 252; green, 15; blue, 15 }  ,draw opacity=1 ][fill={rgb, 255:red, 255; green, 0; blue, 4 }  ,fill opacity=1 ] (319,170.79) .. controls (319,169.41) and (320.12,168.29) .. (321.5,168.29) .. controls (322.88,168.29) and (324,169.41) .. (324,170.79) .. controls (324,172.17) and (322.88,173.29) .. (321.5,173.29) .. controls (320.12,173.29) and (319,172.17) .. (319,170.79) -- cycle ;
\draw [color={rgb, 255:red, 255; green, 14; blue, 14 }  ,draw opacity=1 ][fill={rgb, 255:red, 172; green, 15; blue, 34 }  ,fill opacity=1 ][line width=1.5]    (261,81.33) -- (243.28,93.71) ;
\draw [shift={(240,96)}, rotate = 325.07] [fill={rgb, 255:red, 255; green, 14; blue, 14 }  ,fill opacity=1 ][line width=0.08]  [draw opacity=0] (9.29,-4.46) -- (0,0) -- (9.29,4.46) -- cycle    ;
\draw  [fill={rgb, 255:red, 4; green, 0; blue, 0 }  ,fill opacity=1 ] (309,221.4) -- (317.5,221.4) -- (317.5,198) -- (334.5,198) -- (334.5,221.4) -- (343,221.4) -- (326,237) -- cycle ;
\draw  [fill={rgb, 255:red, 0; green, 0; blue, 0 }  ,fill opacity=1 ] (250,261) -- (310,320.33) -- (250,320.33) -- cycle ;
\draw  [line width=1.5]  (292,261.29) -- (307,261.29) -- (307,274.79) ;
\draw  [line width=1.5]  (299.5,281.54) -- (284.5,281.54) -- (284.5,268.04) ;
\draw  [fill={rgb, 255:red, 0; green, 0; blue, 0 }  ,fill opacity=1 ] (151,261) -- (211,320.33) -- (151,320.33) -- cycle ;
\draw  [line width=1.5]  (193,261.29) -- (208,261.29) -- (208,274.79) ;
\draw  [line width=1.5]  (200.5,281.54) -- (185.5,281.54) -- (185.5,268.04) ;
\draw  [fill={rgb, 255:red, 0; green, 0; blue, 0 }  ,fill opacity=1 ] (151,351) -- (211,410.33) -- (151,410.33) -- cycle ;
\draw  [line width=1.5]  (193,351.29) -- (208,351.29) -- (208,364.79) ;
\draw  [line width=1.5]  (200.5,371.54) -- (185.5,371.54) -- (185.5,358.04) ;

\end{tikzpicture}

\caption{$\mathrm{WT}(\Lambda, \Omega)$ (top) and $\mathrm{WT}(\Lambda,  \widehat{\Omega})$ (bottom).}
\end{figure}

\vspace{5mm}
\noindent
The billiard flow on a compact rational table can be studied by looking at the geodesic flow on a corresponding `translation surface' $X$.
Some non-compact models can also be studied using cover constructions of translation surfaces, termed `$\mathbb{Z}^{k}$-covers'.
$\mathbb{Z}^{k}$-covers are defined at the level of homology using a set of classes $\overline{\gamma}=\{[\gamma_{i}]\}_{i=1}^{k} \subset H_{1}(X, \mathbb{Z})$.
$\mathrm{WT}(\Lambda, \Omega)$ will be described as a $\mathbb{Z}^{2}$-cover of some surface $X$.

The dynamical behavior in $\mathbb{Z}$-covers is better understood,
and recurrence corresponds to a simple `no-drift' condition on $[\gamma_1]$ (\cite{HoW}).
The general case of recurrence in a $\mathbb{Z}^k$-cover with $k \ge 2$ is more complicated, as the no-drift condition is not sufficient.
This was demonstrated with a counterexample by Delecroix \cite{De}.

\vspace{5mm}
\noindent
In our work, we will show recurrence in $\mathrm{WT}(\Lambda,  \widehat{\Omega})$ by using a geometric criterion that was proved in \cite{AH}.
The criterion concerns the (stratified) moduli space of translation surfaces, and the Hodge bundle $H_{g}^{1}$ over it .
We will define these, and the Kontsevich-Zorich cocycle of $(X,\overline{\gamma})$ in $H_{g}^{1}$.
The criterion involves a splitting $H_{g}^{1} = V \oplus V^{\perp}$, with $\overline{\gamma} \subset V(X)$, that is invariant for the cocycle.
Then one needs to find a `good cylinder' --- a flat annulus in $X$ with core curve in $V^{\perp}(X)$.

For a given $\mathrm{WT}(\Lambda, \Omega)$ we will construct $\widehat{\Omega}$
so that $X(\Lambda, \widehat{\Omega})$ contains such a good cylinder.

\subsection{Organization of the paper}

We begin with Section 2, where we recall some background material concerning flat surfaces and their moduli spaces.
Section 3 discusses translation surfaces of infinite type, and $\mathbb{Z}^k$-translation covering constructions.
In Section 4 we formulate the unfolding procedure of planar wind-tree models.
Section 5 shows the recurrence criterion from \cite{AH}.
Then in Section 6 we give the proof of the main theorem.

\subsection{Acknowledgements}

This paper is part of the author's MSc Thesis at Tel Aviv University, under the supervision of Prof. Barak Weiss.
The author would like to express his deepest gratitude to Prof. Barak Weiss, for the his guidance and patience.
The partial support of grants BSF 2016256 and ISF 2095/15 is gratefully acknowledged.


\section{Background}
We begin with some background, concerning flat surfaces and their moduli spaces. We also present several constructions, to be used later.
For additional reading, see $\cite{FoMa},\cite{Yo},\cite{Zo}$.
We especially recommend the available chapters of the upcoming book $\cite{DVH}$ which discusses translation surfaces of infinite type.

\subsection{Translation surfaces and properties}
Translation structure is a stronger notion that that of complex structure.
There are three equivalent definitions for a translation surface --- a constructive definition, a geometric one and an analytic one.
The three definitions lead to some nice properties, which we will mention.

\paragraph{Constructive definition of translation surfaces}
\label{sec:trans:constr}
We call {\em generalized polygon} a compact and connected subset $O \subset \mathbb{R}^{2}$ whose boundary is a finite union of piecewise smooth segments (edges).
Note that the boundary need not be connected.
We will assume this generalization when referring to polygons, unless we are in the context of billiard unfolding. There, when it is important for the edges to be linear, we will use the term `Euclidean polygon'.

We write each edge of $O$ as $e = \left( \gamma, \upsilon \right)$ where 
$\gamma: [0,1] \rightarrow \mathbb{R}^{2}$ is the corresponding path,
and $\upsilon \in \{\pm1\}$ indicates whether or not $e$ is positively oriented w.r.t. $O$. 

A translation surface will be constructed from a finite configuration of (generalized) polygons $\Omega=\left\{O_{i}\right\}_{i=1}^{r_{\mathrm{o}}}$,
by identifying their edges in some manner.

We denote by $e(\Omega)=\left(e_{1}, \ldots, e_{2d}\right)$ the set of edges of all the polygons, which we assume has an even number of elements.
We also denote by $s(\Omega) = \left\{ s_1, \dots, s_{r_s} \right \}$
the unique set of vertices from $\left(\gamma_{1}(0),\gamma_{1}(1),\ldots, \gamma_{2d}(0), \gamma_{2d}(1)\right)$.

\begin{definition}
\label{def:trans:constr}
(Constructive). A {\em translation surface} $X$, or a {\em flat surface}, consists of a configuration $\Omega$ of polygons, an edge set $e(\Omega)$,
and a (pairing) map $\mathfrak{T}:e(\Omega) \rightarrow e(\Omega)$, $\mathfrak{T}^2 = I$,
so that for each $\mathfrak{T}(\gamma_{i},\upsilon_i) =  (\gamma_j,\upsilon_j)$ we have $\upsilon_j = -\upsilon_i$ and 
$\gamma_j(t) = \gamma_i(t) + \tau_{i}$ where $\tau_{i} \in \mathbb{R}^{2}$.
Taking the quotient $\Omega/\mathfrak{T}$ by the relation $\mathfrak{T}$ on the edges, we get the translation surface $X$.
\end{definition}

\vspace{1mm}
\noindent
We will always assume the construction results in a connected surface.
In later sections, $\gamma$ will be used also for marking general paths in $X$, so in places of ambiguity we will also write $\gamma_{e_i}$. 
We consider the subset $e^+(\Omega) \subset e(\Omega)$ of positively oriented edges, so that $e(\Omega) = e^+(\Omega) \cup \mathfrak{T}(e^+(\Omega))$. 
We write $e(\Omega) = (e_1, \dots, e_d, e_{d+1}, \dots, e_{2d})$, with $e_1, \dots, e_d \in e^+(\Omega)$, $ e_{d+i} = \mathfrak{T}(e_i)$.


\paragraph{Cone singularities}
\label{cone:sing}

In the constructive definition, the images of the vertices $s(\Omega)$ under $\mathfrak{T}$ are cone singularities.
That is, each singularity has a neighbourhood $U$ which is a branched coverings of the unit disk ${ \mathbb{P}:U \rightarrow \Delta}$,
and a local coordinate map $(U,w)$ such that $\mathbb{P} w^{-1} = z^{k+1}$ for some $k \in \mathbb{N}$.

\begin{conv}\label{convention}
\label{conv:sing:conf}
{\rm 
When a surface $X$ is given together with a configuration $\Omega$ of polygons, as in Definition~\ref{def:trans:constr},
we will take the singularity set $\Sigma$ as $\Sigma =  s(\Omega) / \mathfrak{T}$.
}
\end{conv}

That is, our choice will then depend on $\Omega$ and we might include vertices with $k_{\sigma} = 0$, which we call regular singularities or marked points.
We denote by $\Sigma_\mathrm{s} \subset \Sigma$ the non-regular singularities.

All the properties concerning points of $X\backslash \Sigma$ also hold for regular singularities, 
but it will be convenient to keep them in $\Sigma$ when we construct a basis for homology based on $\Omega$ in Subsection~\ref{basis:homology}.


\paragraph{Geometric and analytic definitions}

The geometric definition of a translation surface is given by a translation atlas on $X\backslash \Sigma$,
$\left\{U_{\alpha},w_{\alpha}\right\}_{\alpha}$, whose transition functions are all translation maps in $\mathbb{C}$:
${ w_{\beta} w_{\alpha}^{-1}(z)=z+\tau_{\alpha, \beta} }$.

The analytic definition of translation surfaces is given by a pair $(X,\omega)$ of a Riemann surface $X$ and an abelian (holomorphic) differential $\omega$.

Considering the Constructive Definition~\ref{def:trans:constr}, locally in $X \backslash \Sigma$ the abelian differential can be taken as $\omega = dz$,
where $z$ is the natural coordinate in $\mathbb{C}$. Around a singularity $\sigma \in \Sigma$ of order $k$, $\omega = d(z^{k+1}) =  (k+1) z^{k} d z$.
So a cone singularity of order $k$ corresponds to a zero of order $k$ of $\omega$.

\vspace{5mm}
\noindent
Using the Riemann–Hurwitz formula, we get the following relation for the orders of zeros of $\omega$:
\begin{equation}
\label{formula:sum:sing}
\sum_{\sigma \in \Sigma_{s}} k_{\sigma} = 2g - 2,
\end{equation}
where $g$ is the genus of $X$.


\paragraph{Properties of translation surfaces}

Having a translation atlas on $X\backslash \Sigma_{s}$ allows pulling the vector field $\frac{\partial}{\partial y}$ and the euclidean metric from the plane, both being translation invariant under the transition maps $dw_{\beta} w_{\alpha}^{-1}(z) = dz$.
We get a global `north' direction field and a flat metric on $X\backslash \Sigma_{s}$.

We use here $\Sigma_\mathrm{s}$ instead of $\Sigma$, as we will consider these also on regular singularities.
Points of $\Sigma_{s}$ are singularities of the metric, and $X$ is the metric completion of $X \backslash \Sigma_{s}$.

The vector field $v_{\theta}(z)$ of direction $\theta$ is the vector field such that locally $\omega(v_{\theta}(z)) =  e^{i \theta}$.
The geodesic flow in the north direction $\phi_{t}(x)$ is called the {\em Translation flow}, the {\em Linear flow} or the {\em Straight-line flow}.
$\phi_{t}(x)$ is defined for all $t$, as long as it does not go through a singularity in $\Sigma_\mathrm{s}$.
We denote by $\phi_{t}^{\theta}(x)$ the geodesic flow in direction $\theta$.

\vspace{5mm}
\noindent
There are two geometric objects on a translation surface, concerning the translation flow.
A {\em Saddle connection} is a geodesic segment, which goes through $X \backslash \Sigma_\mathrm{s}$, except for its endpoints which are in $\Sigma_\mathrm{s}$. 
A {\em Cylinder} is a maximal flat annulus foliated by homotopic closed geodesics in $X \backslash \Sigma_\mathrm{s}$.


\paragraph{Translation equivalence maps and translation covering maps}

A morphism of translation surfaces is called {\em translation equivalence map}, 
and it is a biholomorphism ${ f:(X,\omega_X) \rightarrow (Y,\omega_Y) }$ such that $f^*\omega_Y = \omega_X$.
Such a map acts in local flat coordinates by translations ${f(z)=z+\tau}$.

A {\em translation covering map} between two translation surfaces $\widetilde{X}$ and $X$
is a cover map $\mathbb{P}:(\widetilde{X},\widetilde{\omega})\rightarrow (X,\omega)$  such that $\widetilde{\omega} = \mathbb{P}^*\omega$.

\vspace{3mm}
\noindent
For a subgroup of $\pi_1(X\backslash\Sigma, x_0)$ with some $x_0 \in X\backslash\Sigma$, one constructs the cover $\widetilde{X}$ by pulling back the translation structure on $\widetilde{X} \backslash \widetilde{\Sigma}$
and looking at the metric completion $\widetilde{X}$, and the map $\mathbb{P}:(\widetilde{X},\widetilde{x}_0)\rightarrow (X,x_0)$.

This construction also sets the ground for non-compact translation surfaces, which we will discuss later in further detail.
We ignore some issues here. For instance, $\widetilde{\Sigma}$ are the cone singularities of $\widetilde{X}$ and might be infinitely ramified (logarithmic singularity).
Since this will not happen in the wind-tree model cover constructions, we ignore it (for a complete discussion, see for example $\cite{DVH}$).

We will be interested in the correspondence between the linear flow $\phi_{t}^{\theta}(x)$ on $X$ and its lift $\widetilde{\phi}_{t}^{\theta}(\widetilde{x})$ to $\widetilde{X}$.


\paragraph{Affine diffeomorphism and $\mathrm{GL}(2, \mathbb{R})$-action}

An {\em affine diffeomorphism} $f: X \rightarrow Y$ between translation surfaces is a diffeomorphism that in local (flat) coordinates of $X$ and $Y$ acts
by affine maps 
$\left(\begin{array}{c}{x}  \\ {y}\end{array}\right)  \mapsto  \left(\begin{array}{cc}{a} & {b} \\ {c} & {d}\end{array}\right)  \left(\begin{array}{c}{x}  \\ {y}\end{array}\right) + \left(\begin{array}{c}{\tau_{\alpha}}  \\ {\tau_{\beta}}\end{array}\right) $. 

We can describe Y by looking at $f^*\omega_Y = A \omega_X$. 
If $\{U_{\alpha}, w_{\alpha}\}_{\alpha}$ is the initial translation structure of $X$, then the pulled-back structure from $Y$ is $\left\{U_{\alpha}, Aw_{\alpha}\right\}_{\alpha}$ which we denote by $AX$.

\vspace{5mm}

\noindent
Thus, this also allows us to define the action of $\mathrm{GL}(2, \mathbb{R})$ on a translation surface, $X \rightarrow AX$.
In terms of the notions involved in the constructive definition with a configuration $\Omega=\left\{O_{i}\right\}_{i=1}^{r_{\mathrm{o}}}$, one acts on the polygons $\left\{AO_{i}\right\}_{i=1}^{r_{\mathrm{o}}}$.
The $\mathrm{SL}(2, \mathbb{R})$-action, which is area and orientation preserving, will be of special interest.

\vspace{5mm}

\noindent
We let $\text{Aff}(X)$ denote the group of affine automorphisms on $X$.
We will also consider the group $\text{Aff}(X,\Sigma)$, whose elements fix $\Sigma$ (not necessarily pointwise).
We denote by $\text{Aff}^+(X,\Sigma)$ the subgroup of orientation preserving maps.

\paragraph{Intersection form and orientation}
\label{intersection:orientation}

The wind-tree cover constructions will be defined in terms of intersections.
We denote by $H_{1}\left(X, \Sigma, \mathbb{Z}\right)$ the first homology relative to the $\Sigma$,
and by $ i: H_{1}(X, \Sigma, \mathbb{Z}) \times H_{1}(X \backslash \Sigma, \mathbb{Z}) \rightarrow \mathbb{Z} $ the Poincaré-Lefschetz intersection pairing.

\vspace{5mm}
\noindent
A map $f \in \text{Aff}(X,\Sigma)$ affects the intersection form in the following way:
\begin{equation}
\label{form:intersection:orientation}
i\left(f_{*}\left[\gamma_{1}\right], f_{*}\left[\gamma_{2}^{*}\right]\right) = \text{sign}(\mathrm{det}(A)) \, i([\gamma_1],[\gamma_2^{*}]).
\end{equation}
for $\left[\gamma_{1}\right] \in H_{1}(X, \Sigma, \mathbb{Z}),\left[\gamma_{2}^{*}\right] \in H_{1}(X \backslash \Sigma, \mathbb{Z})$.

\paragraph{Holonomy map}
\label{sec:holonomy}

An abelian differential $\omega$ defines a map $\mathrm{hol}_{\omega} \in \operatorname{Hom}\left(H_{1}\left(X, \Sigma, \mathbb{Z}\right), \mathbb{C}\right)$ by
 $\mathrm{hol}_{\omega}(\left[\gamma\right]) := \left[\gamma\right] \mapsto \int_{\gamma} \omega$.
It is called {\em the Holonomy map}, and we also denote it for short by $\omega([\gamma]) := \mathrm{hol}_{\omega}([\gamma])$.

\vspace{4mm}
\noindent
We make two observations, that will be helpful later on.

\vspace{3mm}
\noindent
First, we see that $\omega$ is defined by the values of $\mathrm{hol}_{\omega}$ on a basis $\{[\gamma_i]\}_{i=1}^{2g+|\Sigma|-1} \subset H_{1}\left(X, \Sigma, \mathbb{Z}\right) \cong \mathbb{Z}^{2g+|\Sigma|-1}$; 
the values $\{\omega([\gamma_i])\}_{i=1}^{2g+|\Sigma|-1}$ are called {\em periods}.

\vspace{3mm}
\noindent
Secondly, if a translation surface is given by a configuration $\Omega$, an edge ${ e_i=\left(\gamma_i, \upsilon_i \right) \in e(\Omega) }$
defines an homology class $h(e_i) = [\gamma_i] \in H_{1}\left(X, \Sigma, \mathbb{Z}\right)$.
Note that ${ h(e_i) = h(\mathfrak{T}(e_i)) }$, and $\omega(h(e_i)) = \gamma(1)-\gamma(0)$.

We will see in Section~\ref{basis:homology} that the homology classes $h(e_i)$ with $e_i \in e(\Omega)$
form a basis for $ H_{1}\left(X, \Sigma, \mathbb{Z}\right)$ (up to some technicalities which will be discussed).

\paragraph{Cutting and gluing in parallel}
\label{sec:cut:glue}

For a translation surface $X$ and a configuration $\Omega=\left\{O_{j}\right\}_{j=1}^{r_{\mathrm{o}}}$,
there are two operations that can be done to $\Omega$, so that the resulting surface is still the same $X$.
The two correspond to the usual cutting and gluing operations on surfaces, but adapted to the context of translation surfaces.

As we will have only a minor use of them, and being quite intuitive, we just sketch the details. 

\vspace{3mm}
\noindent
{\em cutting in parallel} is done by cutting a polygon $O \in \Omega$ along a simple piecewise smooth path $\gamma \subset O$,
splitting it into two components, and producing two new isometric edges that can be identified together.
One needs to verify that either $\gamma(1) = \mathfrak{T}(\gamma(0))$, or that the endpoints of $\gamma$ are vertices.

{\em gluing in parallel} is done by gluing $e \in e(O_1)$ and $\mathfrak{T}(e) \in e(O_2)$, into a new polygon $O$.
One needs to verify that the gluing can be done in the plane (with no overlappings).

\subsubsection{Half-translation surfaces}

\paragraph{Definition of half-translation surfaces}

We consider another object, called {\em half-translation surface}, which enjoys properties similar to those of translation surfaces.
Like earlier, there are three equivalent definitions.
For the constructive definition, consider a configuration like those discussed in Subsection~\ref{sec:trans:constr}.

\begin{definition}
\label{def:half:trans:constr}
(Constructive). A {\em half-translation surface} $X$ consists of a configuration $\Omega$, orientation $\Upsilon$ of the plane, edge set $e(\Omega)$, 
and a pairing ${ \mathfrak{T}:e(\Omega) \rightarrow e(\Omega) }$, $\mathfrak{T}^2 = I$, such that for each $\mathfrak{T}(e_i) = e_j$ exactly one of the following holds:
\begin{enumerate}
  \item $\upsilon_j = -\upsilon_i$ and $\gamma_j(t) = \gamma_i(t) + \tau_i$.
  \item $\gamma_j(t) = \gamma_i(1-t) + \tau_i$.
\end{enumerate}
Taking the quotient $\Omega/\mathfrak{T}$ by the relation $\mathfrak{T}$ on the edges, we get the half-translation surface $X$.
\end{definition}

\vspace{3mm}
\noindent
This time, the vertices $\Sigma_s$ might give {\em half-integer} cone singularities,
having a neighbourhood isometric to $\left(\Delta_r^{*}, g_{\alpha}\right)$ where 
$g_{\alpha}=(d r)^{2}+(\frac{k+1}{2} r d \theta)^{2}$, $ k \in \mathbb{N}$.

The geometric definition is given by a {\em half-translation atlas} $\left\{U_{\alpha},w_{\alpha}\right\}_{\alpha}$ on $X\backslash\Sigma_s$,
whose transition functions are all half-translation maps in $\mathbb{C}$: ${ w_{\beta} w_{\alpha}^{-1}(z)=\pm z+\tau_{\alpha, \beta} }$.

The analytic definition is given using a quadratic differential,
a tensor of the local form $q = p_{\alpha}(w_\alpha)(dw_\alpha)^2$, with $p_\alpha$ holomorphic,
so that on $U_{\alpha} \cap U_{\beta}$ we have $p_{\beta}(w_{\beta})(\frac{dw_{\beta}}{dw_{\alpha}})^2 = p_{\alpha}(w_{\alpha})$.
A half-integer cone singularity of order $k$ correspond to a zero of order $k$ of $q$. A zero of order $-1$ is actually a simple pole of $q$.

\vspace{3mm}
\noindent
Some of the properties of a translation surface can be generalized to half-translation surfaces.
We will be interested mainly in the $\mathrm{SL}(2, \mathbb{R})$-action on a half-translation surface, which is similarly described using Definition~\ref{def:half:trans:constr}.

\paragraph{Double cover, involution}
\label{half:double:inv}

A half-translation surface $(X,q)$ might not longer have global direction fields, as those need not be invariant through half-translations.
In such case, we say that $q$ is non-orientable.

Given $(X,q)$ with a non-orientable $q$, we construct a double cover $\widehat{X}$ of $X$, which is a translation surface.
$(\widehat{X},\omega)$ is made of two copies of $\Omega_X$, $\Omega_{\widehat{X}} = (\Omega_X,\iota \Omega_X)$,
where $\iota \in \text{Aff}(\widehat{X})$ is a rotation by $\pi$, $d\iota = -I$.
It has the following identifications:
If $\mathfrak{T}_X(e_i)=e_j$ is identified by a half-translation $\left( \gamma_j(t)= \gamma_i(1-t)+\tau_i  \right)$,
then $\mathfrak{T}_{\widehat{X}}(e_i)=\iota e_i$ and $\mathfrak{T}_{\widehat{X}}(e_j)=\iota e_j$.

\vspace{5mm}
\noindent
For a translation surface $(X,\omega)$, a map $\iota \in \text{Aff}(X)$ as above $d\iota = -I$ will be called an involution.
If $X$ has an involution $\iota$ then $X/\iota$ is a half translation surface with a non-orientable $q$, and $X$ is its double cover.

\paragraph{Symplectic splitting of homology}
\label{half:split:hom:sec}

Let $(X,\omega)$ be a translation surface with involution $\iota$.
There is a symplectic splitting of homology 
\begin{equation}
H_1(X,\mathbb{C}) = H_1^{\iota+}(X,\mathbb{C}) \oplus H_1^{\iota-}(X,\mathbb{C}),
\end{equation}
where $H_1^{\iota+}(X,\mathbb{C})$ is the subspace of classes invariant under $\iota_*$, and $H_1^{\iota-}(X,\mathbb{C})$ is the subspace of anti-invariant classes.

One can also consider a similar splitting for relative homology
$H_{1}(X, \Sigma,\mathbb{C})$, given that $\iota \in \text{Aff}(X,\Sigma)$.
One then has direct sum decompositions
\begin{equation}
\label{half:split:hom}
\begin{aligned}
H_1(X, \Sigma,\mathbb{C}) = { H_1^{\iota+}(X, \Sigma,\mathbb{C}) \oplus H_1^{\iota-}(X, \Sigma,\mathbb{C}) }, \\
H_1(X \backslash \Sigma,\mathbb{C}) = H_1^{\iota+}(X \backslash \Sigma,\mathbb{C}) \oplus H_1^{\iota-}(X \backslash \Sigma,\mathbb{C}).
\end{aligned}
\end{equation}
Now for $[\gamma_1^{+}] \in  H_1^{\iota+}(X, \Sigma,\mathbb{C})$ and $[{\gamma_2^{*}}^{-}] \in H_1^{\iota-}(X \backslash \Sigma,\mathbb{C})$ we have
${ i([\gamma_1^{+}],[{\gamma_2^{*}}^{-}]) = 0 }$ (and also for  $[\gamma_1^{-}] \in  H_1^{\iota-}(X, \Sigma,\mathbb{C})$ and $[{\gamma_2^{*}}^{+}] \in H_1^{\iota+}(X \backslash \Sigma,\mathbb{C})$).
In what follows, a splitting as in formula~(\ref{half:split:hom}) will also be called symplectic.

\vspace{5mm}
\noindent
We can identify 
$H_{1}^{\iota+}(X, \mathbb{C}) \cong H_{1}(X/\iota, \mathbb{C})$, using the exact sequence\\
${ 0 \rightarrow   H_{1}^{\iota+}(X, \mathbb{C})   \rightarrow  H_{1}(X, \mathbb{C})  \stackrel{\mathbb{P}_*}{\longrightarrow}   H_{1}(X/\iota, \mathbb{C}) \rightarrow 0}$.
When looking at relative homology, there is a delicate consideration where simple poles lift to regular points (which we do not mark).
So, if we put $\widetilde{\Sigma}(X)  = \mathbb{P}^{-1}(\Sigma(X/\iota)) \backslash \{\widetilde{\sigma} \,|\, k_\sigma =-1 \}$,
then this time it is the anti-invariant subspace
\begin{equation}
H_{1}^{\iota-}(X, \widetilde{\Sigma}, \mathbb{C}) \cong H_{1}(X/\iota, \Sigma, \mathbb{C}).
\end{equation}


\subsection{Rational billiards}
\subsubsection{Rational billiards unfolding}
\label{sec:billiard:unfold}
A billiard in the plane is a dynamical model, in which a particle moves in a straight line, and upon hitting an obstacle it reflects elastically. 
The billiard flow in certain polygonal tables $P$  can be modeled as the linear flow on a corresponding translation surface.
The idea behind the construction is that instead of reflecting the trajectory upon collision, one can reflect the table $P$ in the plane, and let the particle move in the same direction.
However, one needs to choose polygons for which this process is finite.

The billiard unfolding construction is finite for {\em rational billiards}, and is called {\em the Katok-Zemliakov billiard unfolding}. 
We define both, using some of the notations of Subsection~\ref{sec:trans:constr}.

\vspace{5mm}
\noindent
Let $P$ be a Euclidean polygon, and $e(P)=\left(e_1, \ldots, e_{r_e}\right)$ be its edge set.
In contrast with generalized polygons, here the edges must be linear for the billiard model to be meaningful.
To each edge $e_{i}$ we associate the reflection $\rho_{e_{i}}$ in the plane along the line going through $\gamma_i$.

Denote by $\xi(P)=\left(\xi_{1}, \ldots, \xi_{r_e}\right)$ the set of angles the edges make with some chosen horizontal axis,
like in the Introduction~\ref{rational:config:intro}. Denote by ${ \xi_{\text{diff}}(P)=\left\{ \xi_{i}-\xi_{j} \right\}_{i,j} }$
the set of differences.

A polygon is called {\em rational} if its set of differences is of the form $\xi_{\text{diff}}(P) = \left\{  \frac{m_i}{n_i} \pi \,|\, n_i \in \mathbb{N}, m_i \in \mathbb{Z} \right\}_{i}$.
For a rational polygon, the number $n(P) := \mathrm{l.c.m.}(\left\{   n_i   \right\}_i)$ is called {\em the unfolding constant}.

\vspace{5mm}
\noindent
Let $D_n$ denote the dihedral group of $2n$ elements, $D_n = \left\{  \rho_1, \dots, \rho_{2n}      \right\}$, where $\rho_1 = I$ is the identity, and $n$ is the unfolding constant.
We will also write $\left\{\rho_{1}, \ldots, \rho_{2 n}\right\} = \left\{  \theta_{1}, \ldots, \theta_{n}, \rho\theta_{1}, \ldots, \rho\theta_{n}  \right\}$
where $\theta_i$ are rotation elements, and $\rho$ is a reflection element.

The associated translation surface $X(P)$ is constructed from the configuration $\Omega=\left\{ \rho_i P  \right\}_{i=1}^{2n}$,
where each time we reflect $P$ in a different copy of the plane. 
For each edge $e_j \in e(P)$, $\rho_i e_j$ is defined by $\rho_i e_j = \left(   \rho_i \gamma_j, \upsilon_j \cdot \upsilon_{\rho_i}  \right)$
where\\ $\upsilon_{\rho_i}=\left\{\begin{array}{ll}{+1,} & {\text{if }\rho_i = \;\;\theta_i} \\ {-1,} & {\text{if }\rho_i = \rho\theta_{i-n}.}\end{array}\right.$

Now, the edge set of $\Omega$ is $e(\Omega) = D_n e(P).$
The identifications are given by $\mathfrak{T}(\rho_i e_j) = \rho_{i}\rho_{e_j}\rho_{i}^{-1} \rho_i e_j = \rho_i \rho_{e_j}e_j$.
With these identifications, one gets the associated translation surface $X(P)$.

\vspace{3mm}
\noindent
The billiard flow on $P$ lifts to a linear flow on $X$, and a linear flow on $X$ projects to the billiard flow on $P$.
Note though that $X(P)$ is not a cover of $P$.
We will study billiard flows by examining the associated linear flows.

\subsubsection{Induced affine automorphisms}
\label{sec:billiard:action}

Given a polygon $P$, the unfolding group $D_{n}$ and the unfolded surface $X(P)$, the configuration $\Omega=\left\{\rho_{i} P\right\}_{i=1}^{2 n}$
is invariant under the maps $\rho_i$ that permute the copies of $P$. They are affine automorphisms of $X$. Also, putting $\Sigma = D_n s(P) / \mathfrak{T}$, we have $\rho_i \in \text{Aff}(X,\Sigma)$.

\vspace{5mm}
\noindent
Formula~(\ref{form:intersection:orientation}) now takes the form:
\begin{corollary}
\label{cor:billiard:sign}
Let $[\gamma_{1}] \in H_{1}(X, \Sigma, \mathbb{Z}),[\gamma_{2}^{*}] \in H_{1}(X \backslash \Sigma, \mathbb{Z})$.\\
For $\{ \rho_{i}= \theta_{i} \}_{i=1}^n$,   $ \quad\quad\,\,\,\,  i(\rho_{i*}[\gamma_{1}],\rho_{i*}[\gamma_{2}^{*}]) =  \quad \!\!  i([\gamma_{1}],[\gamma_{2}^{*}])$.\\
For $\{ \rho_{i}=\rho \theta_{i-n}\}_{i=n+1}^{2n}$,  $           i(\rho_{i*}[\gamma_{1}],\rho_{i*}[\gamma_{2}^{*}]) =     	   -i([\gamma_{1}],[\gamma_{2}^{*}])$.
\end{corollary}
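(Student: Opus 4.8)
The plan is to deduce the corollary directly from Formula~(\ref{form:intersection:orientation}). That formula already reduces the effect of \emph{any} $f \in \text{Aff}(X,\Sigma)$ on the pairing $i(\cdot,\cdot)$ to the single scalar $\text{sign}(\det A)$, where $A$ is the linear part of $f$ in flat coordinates. Since the paragraph preceding the statement records that each $\rho_i$ lies in $\text{Aff}(X,\Sigma)$, the entire content of the corollary is the computation of $\text{sign}(\det A)$ for $A = d\rho_i$ in the two cases, followed by substitution.

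First I would pin down the linear part $A = d\rho_i$. The automorphism $\rho_i$ acts on the configuration $\Omega = \{\rho_j P\}_{j=1}^{2n}$ by the permutation $\rho_j P \mapsto \rho_i\rho_j P$ of the copies, and on each copy it is the restriction of the plane isometry $\rho_i \in D_n$. As $\rho_i$ permutes isometric copies, it is a flat isometry of $X$, so its constant linear part $A$ is orthogonal and $\det A = \pm 1$; concretely $A$ is the linear part of $\rho_i$ viewed as an element of $D_n \subset O(2)$. The one point that genuinely requires care, and which I expect to be the main (if minor) obstacle, is the \emph{sign} of this determinant: one must confirm that the rotation elements $\theta_i$ act orientation-preservingly on the oriented surface $X$ while the reflection elements $\rho\theta_{i-n}$ act orientation-reversingly. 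This amounts to tracking how the flat charts are carried across the edge gluings $\mathfrak{T}$ between a copy $\rho_j P$ and its image $\rho_i\rho_j P$, checking that no spurious orientation factor is introduced beyond the plane linear part of $\rho_i$.

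With $A$ identified, the two cases are immediate. For $\rho_i = \theta_i$ a rotation, $A \in \mathrm{SO}(2) \subset \mathrm{SL}(2,\mathbb{R})$, so $\det A = 1$ and $\text{sign}(\det A) = +1$; substituting into Formula~(\ref{form:intersection:orientation}) yields $i(\rho_{i*}[\gamma_1],\rho_{i*}[\gamma_2^*]) = i([\gamma_1],[\gamma_2^*])$, the first line. For $\rho_i = \rho\theta_{i-n}$, the factor $\rho$ is orientation-reversing, so $\det A = -1$ and $\text{sign}(\det A) = -1$, giving $i(\rho_{i*}[\gamma_1],\rho_{i*}[\gamma_2^*]) = -i([\gamma_1],[\gamma_2^*])$, the second line. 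This exhausts the statement.
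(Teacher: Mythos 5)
Your proposal is correct and is essentially the paper's own argument: the paper presents Corollary~\ref{cor:billiard:sign} as a direct instance of formula~(\ref{form:intersection:orientation}), using that each $\rho_i$ lies in $\mathrm{Aff}(X,\Sigma)$ with linear part the corresponding dihedral element, so $\mathrm{sign}(\det A)=+1$ for the rotations $\theta_i$ and $-1$ for the reflections $\rho\theta_{i-n}$. Your additional care in checking that the edge gluings introduce no spurious orientation factor is a sound elaboration but not a departure from the paper's route.
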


\subsubsection{Parking garages}
\label{sec:parking}

Following \cite{CW}, we can generalize the unfolding procedure of Section~\ref{sec:billiard:unfold} from polygons to objects called `parking garages'.

Let $P$ be a compact connected surface with boundary, together with an immersion $w_P:P \rightarrow \mathbb{R}^2$.
Assume that $w_P(\partial P)$ is a finite union of linear segments, $w_P(\partial P) = (e_1, \dots, e_{r_e})$. Then $P$ is called {\em a parking garage}.
It can also be described using the Constructive Definition~\ref{def:trans:constr}, but where a subset of linear edges is not paired, and serve as boundary.
The unfolding procedure can then be applied to $P$.

The edges are $e(P) = w_{P}(\partial P)$. We can define $\xi(P),n(P)$ similarly.
The copies $\rho_i P$ are defined using post composition on the immersion $\rho_i \circ w_P$.
The unfolding now follows the earlier procedure, using the identification law ${ \mathfrak{T}(\rho_i e_j) = \rho_i \rho_{e_j}e_j }$.

\vspace{5mm}
\noindent
An interesting use of a parking garage $P$ is to consider $P$ as a sort of translation surface with piece-wise smooth boundary.

A {\em translation surface with boundary} $X_b$ is a translation surface $X$ with a number of open disks removed from its interior $X \backslash \{\Delta_i\}_i$.
Most of the properties we have seen earlier can be generalized to translation surfaces with boundary.
In a parking garage $P$ the boundary is only piece-wise smooth. Still, cover constructions (as we will encounter later), which are essentially topological, hold just as well.

\vspace{5mm}
\noindent
Let us also introduce a more restrictive type of parking garages, which we will encounter in the context of the wind-tree model (this will be important in Section \ref{cover:boundary}).

Assume a translation surface is given like in the Constructive Definition~\ref{def:trans:constr},
just so that one removes open disks $\Delta_i$ from the interior of the polygons $O_j$ of $\Omega$.
That is, the boundary components are not allowed to intersect any edges or vertices.
This way, one carries out the edge identifications while ignoring the boundary.
We call this a {\em simple translation surface with boundary}.

Similar to the above, but where the boundary components are polygons, gives us a {\em simple parking garage}.

\vspace{5mm}
\noindent
In the wind-tree unfolding, we will work with a simple parking garage.
This will allow us to use cover constructions of a translation surface, while also being able to apply an unfolding procedure.

\subsubsection{Configuration and basis for homology}
\label{basis:homology}

We now show how, given a translation surface with a configuration $\Omega$ (and via some manipulations), the homology classes represented by the edges give a basis for homology $H_1(X,\Sigma,\mathbb{Z})$.

This construction will be helpful in Section~\ref{sec:z:d:cover}, for formula~(\ref{z:k:cover:constr}), where we describe $\mathbb{Z}^k$-covers in terms of intersections in $H_1(X,\Sigma_\mathrm{s},\mathbb{Z})$.
The construction will let us easily describe the wind-tree unfolding process later.
This is where Convention~\ref{conv:sing:conf} is needed.

\vspace{5mm}
\noindent
We begin with a simple case. We take $\Omega = {O}$ to be a single polygon $O$ with connected boundary.
We will find a basis for $H_{1}\left(X, \Sigma_{\mathrm{s}}, \mathbb{Z}\right)$.

As $h(e_i) = h(\mathfrak{T}(e_i))$, we focus on $e^+(O)$.
For each $e_i \in e^{+}(O)$ define the cycle $\gamma^{*}_{e_i}$ as a simple loop starting at the midpoint of $\mathfrak{T}(e_i)$,
going in the interior of $O$ and ending at the midpoint of $e_i$, without crossing any other edges. Note that this is well defined up to homotopy,
and $\omega([\gamma^{*}_{e_i}]) = -\tau_i, [\gamma^{*}_{e_i}] \in H_{1}\left(X \backslash \Sigma_{\mathrm{s}}, \mathbb{Z}\right)$.

Letting $e_1, \dots, e_d \in e^+(O)$ be the positively oriented edges, we look at $h(e_1), \dots, h(e_d)$.
We have $i(h(e_i),[\gamma^{*}_{e_i}]) = 1$, and $i(h(e_i),[\gamma^{*}_{e_j}]) = 0$ for $i \ne j$. 
So $\{h(e_i)\}_{i=1}^{d}$ is a basis of $H_1(X,\Sigma_\mathrm{s},\mathbb{Z})$, and $\{[\gamma^{*}_{e_i}]\}_{i=1}^{d}$ is the dual basis of $H_{1}\left(X \backslash \Sigma_{\mathrm{s}}, \mathbb{Z}\right)$.

\vspace{5mm}
\noindent
Passing to general configurations $\Omega=\left\{O_{j}\right\}_{j=1}^{r_{\mathrm{o}}}$, 
and allowing additional marked points $\Sigma \cup \Sigma_{\mathrm{m}}$, introduces some technicalities.
We will sketch the details, but focus on the main conclusion for later.

First, one needs to glue in parallel the polygons into a connected surface with boundary $P$ along $2(r_{\mathrm{o}}-1)$ edges. 
(If these are Euclidean polygons, then $P$ is a parking garage).
For each marked point $\sigma_{\mathrm{m}} \in \Sigma_{\mathrm{m}}$, we need to add an edge between it and some vertex $\sigma \in \Sigma$.
If the boundary of $P$ is not connected, we need to add edges between two vertices on different boundary components.

One can let the additional edges pass in the interior of $P$, without intersecting the boundary, and keeping $P$ connected.
It is then possible to find dual edges, like earlier, showing that the edges give a  basis of $H_1(X,\Sigma \cup \Sigma_{\mathrm{m}},\mathbb{Z})$,
and also a symplectic basis of $H_1(X\backslash(\Sigma \cup \Sigma_{\mathrm{m}}),\mathbb{Z})$.

The main fact for formula~(\ref{z:k:cover:constr}) later is that upon choosing a subset of edges $e_{\mathrm{C}} \subset e(\Omega)$,
if one can glue $\Omega$ to a connected surface along $e(\Omega) \backslash e_{\mathrm{C}}$, 
then the classes $h(e_{\mathrm{c}}), e_{\mathrm{c}} \in e_{\mathrm{C}}$ are part of the symplectic basis.


\subsection{Tiling of the plane}
\label{tilling}

In this section, we show how a tiling of the plane, defined by a lattice $\Lambda$ in $\mathbb{R}^2$ and a fundamental domains $F \subset \mathbb{R}^2$,
can be described using a translation surface.

\vspace{5mm}
\noindent
When refering to fundamental domains $F \subset \mathbb{R}^2$ we restrict ourselves to domains that are simply connected and with simple piecewise smooth boundary.
Elements of $\Lambda$ translate $F$ to exhaust the plane.
The boundary of $F$ consists of pairs of isometric segments $\gamma_i, \mathfrak{T}(\gamma_i)$ that differ by a translation 
$\tau_{i} \in \Lambda$, and with different orientation w.r.t $F$. 

Following Subsection~\ref{sec:cut:glue}, we see that $F$ is a configuration $\Omega = \{F\}$ of a translation surface $X(O)$.
Note that since every vertex of $X$ is summed up to $2\pi$ in the plane, $X$ is a torus. 

\vspace{3mm}
\noindent
Now we can describe the tiling using $X$.
We take copies $F(l_1,l_2)$,  $l_1,l_2 \in \mathbb{Z}$ of $F$, and glue them as follows: 

Pick two different edges $e_1,e_2  \in e^+(F)$, so that $\tau_1,\tau_2$ are linearly independent. 
As $\Lambda$ is a 2-dimensional lattice, write for each $\tau_i$, $\tau_i = a_1^i\tau_1 + a_2^i\tau_2$, $ a_1^i, a_2^i \in \mathbb{Z}$.
Now glue
\begin{equation}
\label{form:lattice:identifications}
e_i(l_1,l_2) \sim \mathfrak{T}(e_i)(l_1 + a_1^i, l_2 + a_2^i).
\end{equation}
Specifically, we glue $e_1(l_1,l_2) \sim \mathfrak{T}(e_1)(l_1 + 1,l_2)$, and $e_2(l_1,l_2) \sim \mathfrak{T}(e_1)(l_1,l_2 + 1)$.
This generates the plane.


\subsection{Moduli spaces of flat surfaces}
\label{moduli:space:section}

\subsubsection{Moduli space of flat surfaces}
Fix a compact Riemann surface $R$ of genus $g \ge 1$. 
Choose an (ordered) discrete subset $\Sigma \subset R, \Sigma = (\sigma_1,\dots,\sigma_{|\Sigma|})$, which will represent singularities and marked points.
Denote by $\operatorname{Diff}(R,\Sigma)$ the group of diffeomorphisms of $R$ that fix each $\sigma_i$.
Let $\operatorname{Diff}_{0}(R,\Sigma) \subset \operatorname{Diff}(R,\Sigma)$ be the subgroup of elements that are (smoothly) isotopic to the identity (through maps fixing each $\sigma_i$).

Recalling formula~(\ref{formula:sum:sing}), we choose an ordered set $\kappa=\left(k_{1}, \dots, k_{|\Sigma|}\right)$, so that $\sum_{i=1}^{|\Sigma|} k_{i}=2 g-2$.
Here we also allow $k_i = 0$, that is, marked points.
We call $\kappa$ {\em the combinatorial datum}.
We also denote $n = 2 g+|\Sigma|-1$.

\vspace{5mm}
\noindent
The stratified Teichmüller space of abelian differentials $\mathcal{T} \mathcal{H}(\kappa)$ consists of triplets $(X,\omega,f)$
where two points $(X_1,\omega_1,f_1),(X_2,\omega_2,f_2)$ are said to be equivalent if
$f_2 \circ f_1^{-1} : X_1 \rightarrow X_2$ is isotopic (through diffeomorphisms sending each $f_1(\sigma_i)$ to $f_2(\sigma_i)$) to a diffeomorphism $h: X_1 \rightarrow X_2$
such that $h^*\omega_2 = \omega_1$.

\vspace{5mm}
\noindent
$\mathcal{T} \mathcal{H}(\kappa)$ can be endowed with a topology and complex affine structure.
On a small neighbourhood of a point $\left(X, \omega_{X}, f_{X}\right) \in U \subset \mathcal{T} \mathcal{H}(\kappa)$,
one can identify elements of the homology groups
$H_{1}\left(Y, \Sigma(\omega_Y), \mathbb{C}\right)$ and $H_{1}\left(X, \Sigma(\omega_X), \mathbb{C}\right)$
for $\left(Y, \omega_{Y}, f_{Y}\right) \in U$, using the Gauss-Manin connection.

Choosing a basis $\left(  [\gamma_1], \dots, [\gamma_n]  \right)$ of $H_{1}(R, \Sigma, \mathbb{Z})$,
the map
$\Theta_U: U \rightarrow \mathbb{C}^n$, $\Theta_U((Y,\omega_Y,f_Y)) = \left( \omega_{Y}(f_{Y *} [\gamma_{1}]), \dots, \omega_{Y}(f_{Y *} [\gamma_{n}]) \right)$,
taking each surface to its periods, gives local coordinates.

\vspace{5mm}
\noindent
The stratified moduli space of abelian differentials $\mathcal{H}(\kappa)$ consists of points $(X,\omega)$,
where $X$ comes with a labeling of the singularities $\Sigma(X)$, 
so that two points $(X_1,\omega_1),(X_2,\omega_2)$ are equivalent if
there exists a diffeomorphism ${ h: X_1 \rightarrow X_2 }$, with $h(\sigma_i(X_1)) = \sigma_i(X_2)$, for which $h^*\omega_2 = \omega_1$. 
We will simply refer to it as a {\em stratum}.

\vspace{5mm}
\noindent
The group $\operatorname{Diff}(R, \Sigma)$ acts on $\mathcal{TH}(\kappa)$ by $\left(X, \omega, f \right) \rightarrow\left(X, \omega, f \circ g^{-1}\right)$ 
for $g \in \operatorname{Diff}(R, \Sigma)$. 
The discrete group $\operatorname{Mod}(R, \Sigma) = \operatorname{Diff}(R, \Sigma) / \operatorname{Diff}_{0}(R, \Sigma)$ is the mapping class group.
We then have $\mathcal{H}(\kappa) = \mathcal{TH}(\kappa) / \operatorname{Mod}(R, \Sigma)$. That is, we forget the marking.

The stratum $\mathcal{H}(\kappa)$ has a complex affine orbifold structure,
as the $\operatorname{Mod}(R, \Sigma)$-action on $\mathcal{TH}(\kappa)$ can be shown to be properly discontinuous. 

\subsubsection{Affine measure and normalization}
\label{mod:space:norm}

One applies several normalizations on $\mathcal{H}(\kappa)$, to allow endowing it with a finite measure. 

Under `normalization of orientation', we use the subgroup $\operatorname{Mod}^{+}(R, \Sigma)$ of orientation-preserving diffeomorphisms.
We denote by $\mathcal{TH}^{+}(\kappa)$ the space of points $(X,\omega,f) \in \mathcal{TH}(\kappa)$ with $f$ orientation preserving, and where isotopy is via $\operatorname{Diff}_{0}^{+}(R,\Sigma)$. 
We set $\mathcal{H}^{+}(\kappa) = \mathcal{TH}^{+}(\kappa) / \operatorname{Mod}^{+}(R, \Sigma)$.

Next, we apply `normalization of area', where ${ \mathcal{H}_{1}(\kappa) \subset \mathcal{H}^{+}(\kappa) }$
consists of abelian differentials corresponding to translation surfaces of area 1.

Now, one can use the period coordinates to pull the standard volume form from $\mathbb{C}^n$ to $\mathcal{TH}^{+}(\kappa)$.
This is possible since the transition maps are affine and orientation-preserving.
We endow $\mathcal{TH}^{+}(\kappa)$ with the associated Lebesgue measure. 
We also note that the measure is invariant under the $\operatorname{Mod}^{+}(R, \Sigma)$-action,
so we obtain a Lebesgue-measure on $\mathcal{H}^{+}(\kappa)$ as well, which we denote by $\lambda$.
We mark by $\lambda_{1}$ the derived measure on $\mathcal{H}_{1}(\kappa)$.

\vspace{3mm}
\noindent
A fundamental result by Masur and Veech is the following \cite{Ma1,Ve}:
\begin{theorem}
The total mass of $\lambda_{1}$ is finite.
\end{theorem}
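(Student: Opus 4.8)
The plan is to reduce the finiteness of $\lambda_1$ to a volume estimate and then to a counting/integrability statement controlled by the integral affine structure of the stratum. First I would dispose of the area normalization: rescaling $\omega \mapsto t\omega$ multiplies area by $t^2$ and acts on period coordinates $\Theta_U$ as the homothety $z \mapsto tz$ of $\mathbb{C}^n$, so $\mathcal{H}_1(\kappa)$ is a transverse section to the $\mathbb{R}_{>0}$-scaling orbits and $\lambda$ splits as a radial factor times $\lambda_1$. Integrating out the radial coordinate shows that $\lambda_1$ is finite if and only if the $\lambda$-volume of the truncated cone $B = \{(X,\omega) : \operatorname{Area}(\omega) \le 1\}$ is finite, so it suffices to bound $\lambda(B)$.

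Next I would use that the period map conjugates the transition functions, i.e. the $\operatorname{Mod}(R,\Sigma)$-monodromy acting on $H_1(X,\Sigma,\mathbb{C}) \cong \mathbb{C}^n$, to integer matrices preserving the lattice $H_1(X,\Sigma;\mathbb{Z}\oplus i\mathbb{Z})$. This endows $\mathcal{H}(\kappa)$ with an integral affine structure in which $\lambda$ is the associated Lebesgue measure, so that lattice-point counts and saddle-connection holonomies become well-defined invariants. The space $\mathcal{H}_1(\kappa)$ fails to be compact precisely because surfaces may degenerate by developing a short saddle connection, and the entire difficulty is to show that these cuspidal regions carry only finite volume.

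Concretely I would follow Veech's zippered-rectangle parametrization. Up to passing to a finite cover and fixing a Rauzy class, a full-measure part of the stratum is coordinatized by an interval-exchange combinatorial datum together with length data $\lambda \in \mathbb{R}^d_{>0}$ and suspension data. The Teichm\"uller geodesic flow $g_t$, realised through the $\mathrm{SL}(2,\mathbb{R})$-action $X \mapsto g_t X$, expands the horizontal and contracts the vertical directions and admits a Poincar\'e section whose first-return map is the Rauzy--Veech renormalization $\mathcal{R}$. Disintegrating $\lambda_1$ along this section expresses it as the natural Lebesgue measure on the simplex of normalized length data integrated against the return-time function, so finiteness of $\lambda_1$ becomes equivalent to integrability of the Rauzy--Veech return time against the $\mathcal{R}$-invariant measure.

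The main obstacle is exactly this integrability, which is the quantitative cusp estimate: one must show that the set of interval exchanges needing a long time before the induction balances the subintervals has rapidly decaying measure, equivalently that the $\lambda$-volume of surfaces carrying a saddle connection of holonomy at most $\varepsilon$ is $O(\varepsilon^2)$. I would prove this by a careful analysis of the combinatorics of the Rauzy graph, bounding the volumes of the nested sets on which the induction is slow and summing the resulting geometric series; the geometric content, that short-saddle-connection regions shrink like the area of a disk of radius $\varepsilon$, is what makes the sum converge. Making these bounds uniform over the stratum and disentangling the overlaps of distinct short-saddle-connection regions is where the real work lies. An alternative would be Masur's original argument coupling finiteness with recurrence of the Teichm\"uller flow, but the zippered-rectangle route makes the cusp estimate the most transparent.
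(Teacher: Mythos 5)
The paper does not prove this statement at all: it is quoted as a foundational theorem with references to \cite{Ma1} and \cite{Ve}, and everything downstream treats it as a black box. So the only meaningful comparison is between your proposal and the original proofs, and your outline is recognizably Veech's route (zippered rectangles and Rauzy--Veech induction) rather than Masur's Teichm\"uller-theoretic argument.

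As a proof, however, your proposal has a genuine gap, and it sits exactly where you say ``the real work lies.'' The preliminary reductions are correct and standard: the cone construction reduces finiteness of $\lambda_1$ to finiteness of $\lambda$ on $\{\operatorname{Area} \le 1\}$; period coordinates give an integral affine structure with $\operatorname{Mod}(R,\Sigma)$-monodromy preserving the integer lattice; and the Teichm\"uller flow admits the Rauzy--Veech renormalization as a first-return map on a section. But none of this carries the content of the theorem. The entire difficulty is the quantitative cusp estimate --- the integrability of the return time against the induced measure on the section, or the bound on the volume of the region of surfaces having a saddle connection of length at most $\varepsilon$ --- and your proposal asserts this estimate rather than proving it. Two points make the gap more than routine detail. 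First, the invariant measure for the Rauzy--Veech induction on the space of normalized length data is itself \emph{infinite} (this is precisely why Zorich later introduced the accelerated induction), so ``integrability of the roof function against the $\mathcal{R}$-invariant measure'' is a delicate statement about a roof function decaying in the cusps of an infinite-measure base; verifying it is Veech's combinatorial analysis over the Rauzy classes, which you invoke but do not reproduce. Second, the $O(\varepsilon^2)$ thin-part bound you cite as ``the geometric content'' is a sharper theorem of Masur--Smillie type; the heuristic ``area of a disk of radius $\varepsilon$'' does not become an argument until one controls how many mapping-class-group translates of a cusp region can overlap, which is again the actual issue. So the proposal is a correct roadmap of a known proof, but as a self-contained argument it is incomplete.
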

\noindent
Having a finite measure sets the ground for discussing ergodic actions.

\subsubsection{$\mathrm{SL}(2, \mathbb{R})$-action  and geodesic flow}

Using the $\mathrm{SL}(2,\mathbb{R})$-action on a translation surface, we can define an ${ \mathrm{SL}(2,\mathbb{R}) \text{-action} }$ on $\mathcal{TH}_{1}(\kappa),\mathcal{H}_{1}(\kappa)$.
It is applied by post-composition on local coordinates,\\ 
${ A (X,\omega,f_X) = (A X, A\omega, f_A \circ f_X) }$, $A \in \mathrm{SL}(2,\mathbb{R})$.

Since the $\mathrm{SL}(2,\mathbb{R})$-action is orientation and area preserving, the action is defined on $\mathcal{TH}_{1}(\kappa),\mathcal{H}_{1}(\kappa)$.
The $\mathrm{SL}(2,\mathbb{R})$-action is continuous and preserves the measures $\lambda,\lambda_{1}$.

\vspace{5mm}
\noindent
Thus, one can study the dynamics of the $\mathrm{SL}(2,\mathbb{R})$-action on strata. 
Of special interest is the action of the subgroup $g_{t}=\left(\begin{array}{cc}{e^{t}} & {0} \\ {0} & {e^{-t}}\end{array}\right)$, $t \in \mathbb{R}$,
called {\em the Teichmüller geodesic flow}.
An important result by Masur \cite{Ma1} is:
\begin{theorem}
The $g_t$-action on $\mathcal{H}_{1}(\kappa)$ and thus also the $\mathrm{SL}(2,\mathbb{R})$-action is ergodic w.r.t. $\lambda_{1}$.
\end{theorem}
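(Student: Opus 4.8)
The plan is to prove ergodicity of the geodesic flow $g_t$ and to obtain the full $\mathrm{SL}(2,\mathbb{R})$-statement for free: since $g_t \subset \mathrm{SL}(2,\mathbb{R})$, any $\mathrm{SL}(2,\mathbb{R})$-invariant function is in particular $g_t$-invariant, so once $g_t$ is known to be ergodic such a function is $\lambda_1$-a.e. constant. Thus the entire content lies in the geodesic flow, and the natural tool is the Hopf argument adapted to the Teichm\"uller flow.

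First I would record the hyperbolic behaviour of $g_t$ in the period coordinates $\Theta_U$ introduced in Subsection~\ref{moduli:space:section}. Since $g_t$ scales the real part of each period by $e^{t}$ and the imaginary part by $e^{-t}$, the affine subspaces on which the real periods are held constant are contracted exponentially in forward time (the strong stable leaves), while those on which the imaginary periods are held constant are expanded (the strong unstable leaves); the two horocycle flows $u_s$ and $\bar u_s$ move within these leaves, and the flow direction $g_t$ is transverse to both. The key structural point is that, because these leaves are \emph{affine} in period coordinates and $\lambda_1$ is, up to the area normalization, the Lebesgue measure in those coordinates, the stable and unstable foliations are automatically absolutely continuous and $\lambda_1$ carries a local product structure (a Fubini decomposition) with respect to them. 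This removes what is usually the hardest hypothesis in a Hopf-type argument.

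Then I would run the Hopf argument itself. For a bounded continuous $\varphi$, the forward and backward Birkhoff averages $\varphi^{+}$, $\varphi^{-}$ along $g_t$ exist $\lambda_1$-a.e. by the Birkhoff theorem --- here the finiteness of $\lambda_1$, the Masur--Veech theorem quoted above, is essential --- they agree a.e., and $\varphi^{+}$ (resp.\ $\varphi^{-}$) is constant along the strong stable (resp.\ strong unstable) leaves by uniform continuity of $\varphi$ and convergence of nearby orbits. Using the local product structure, together with the transverse interpolation supplied by $g_t$, I would propagate the a.e.\ constancy of the common invariant function from one leaf to a full neighbourhood, and then, invoking Poincar\'e recurrence to chain neighbourhoods through a fixed compact set of positive measure, to a full-measure saturated set. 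This shows every $g_t$-invariant function is $\lambda_1$-a.e.\ constant.

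The main obstacle is the non-compactness of $\mathcal{H}_1(\kappa)$: the stratum has cusps where the flat surface degenerates (a saddle connection shrinks), the hyperbolicity is only non-uniform, and a priori a Hopf chain could be forced through the escaping ``thin part'' where the leaves are hard to control. The crux is therefore to show that $\lambda_1$-almost every trajectory is recurrent to a fixed compact set with enough regularity to complete the chaining; this is exactly the delicate quantitative input of Masur's proof. I would either control the flow directly near the boundary of the stratum, or, following Masur and Veech, pass through Rauzy--Veech induction: realize surfaces near the expanding leaf as zippered rectangles over interval exchange transformations, so that $g_t$ becomes the suspension of the renormalization map, and deduce ergodicity from the existence and ergodicity of a finite absolutely continuous invariant measure for the Gauss-like renormalization dynamics, where the cusp behaviour is encoded combinatorially and is tractable.
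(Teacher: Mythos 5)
The first thing to note is that the paper contains no proof of this statement: it is quoted as background and attributed to Masur \cite{Ma1} (with Veech \cite{Ve} supplying the companion finiteness theorem), so the only meaningful comparison is between your sketch and the classical proofs you are implicitly reconstructing. Your outline is faithful to them in its architecture. The reduction of the full $\mathrm{SL}(2,\mathbb{R})$-statement to the $g_t$-statement is correct and standard (any $\mathrm{SL}(2,\mathbb{R})$-invariant set is in particular $g_t$-invariant). Your description of the hyperbolic structure is also correct: in period coordinates $g_t$ scales each period by $(x,y)\mapsto(e^{t}x,e^{-t}y)$, the affine leaves of constant real (resp.\ imaginary) periods are stable (resp.\ unstable), the two horocycle subgroups move along them, and because the leaves are affine and $\lambda_1$ is Lebesgue in these coordinates, absolute continuity and local product structure are automatic --- this is indeed the feature that makes a Hopf-type argument viable on strata.

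You should be clear-eyed, however, that your third paragraph is where the entire theorem lives, and there your text stops being a proof and becomes a citation: ``this is exactly the delicate quantitative input of Masur's proof'' and ``following Masur and Veech, pass through Rauzy--Veech induction'' defer precisely to the works the paper itself quotes. Establishing that $\lambda_1$-a.e.\ orbit recurs to a fixed compact part of the stratum with enough control to chain Hopf neighbourhoods --- or, equivalently, building the zippered-rectangle suspension and proving existence, finiteness and ergodicity of the invariant measure for the renormalization map --- is the hard content, and nothing in your first two paragraphs reduces its difficulty. Two further technical caveats: the Hopf argument on a non-compact space needs uniformly continuous (or compactly supported) test functions rather than merely bounded continuous ones; and the strong stable/unstable leaves must be corrected for the area normalization, since the affine subspaces of constant real or imaginary periods do not lie inside $\mathcal{H}_1(\kappa)$. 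Neither caveat is fatal and both are handled in standard treatments (see \cite{FoMa}), but as written your proposal establishes the theorem only modulo the same references the paper cites, which is a reasonable outcome given that this is a deep theorem rather than a lemma admitting a short self-contained argument.
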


\noindent
Studying the dynamics on the stratum can help study the dynamics on an individual translation surface, like in the next result:
\begin{theorem}
\label{thm:masur:unique:ergod}
(Masur criterion \cite{Ma2})
If the $g_t$-orbit of a surface ${ (X,\omega) \in \mathcal{H}_{1}(\kappa) }$ is non-divergent, 
that is, $g_{t_n}(X,\omega) \rightarrow (Y,\omega_Y)$ for some sequence ${ t_n \rightarrow \infty, t_n \in \mathbb{R} }$,
then the linear flow in the vertical direction on $X$ is uniquely ergodic.
\end{theorem}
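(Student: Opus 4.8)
The statement is the classical Masur criterion, so the plan is to reconstruct the standard argument rather than invent something new, and the cleanest route is the contrapositive: assuming the vertical linear flow on $(X,\omega)$ is \emph{not} uniquely ergodic, I would show that the forward $g_t$-orbit is divergent in $\mathcal{H}_1(\kappa)$ (it leaves every compact set as $t\to+\infty$), which contradicts the non-divergence hypothesis $g_{t_n}(X,\omega)\to(Y,\omega_Y)$. The key translation tool is the systole compactness criterion for strata (the Mumford-type statement that, among unit-area surfaces, a subset of $\mathcal{H}_1(\kappa)$ is relatively compact if and only if the length of the shortest saddle connection is bounded uniformly from below). With this in hand, divergence of $g_t(X,\omega)$ becomes equivalent to the systole of $g_tX$ tending to $0$, so the entire problem reduces to manufacturing arbitrarily short saddle connections on the geodesic-flowed surfaces out of the failure of unique ergodicity.

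To extract the combinatorial input, I would pass to a segment $I$ transverse to the vertical foliation; the first-return map to $I$ is an interval exchange transformation $T$, and unique ergodicity of the vertical flow is equivalent to unique ergodicity of $T$. Non-unique ergodicity therefore furnishes two distinct ergodic $T$-invariant probability measures $\mu_1\ne\mu_2$, equivalently two non-proportional invariant vectors for the length data of $T$. I would then feed this into Rauzy--Veech induction (the continued-fraction algorithm for interval exchanges), which produces a nested sequence of subintervals $I_n\subset I$ together with Rokhlin towers over them. The non-uniqueness of the invariant measure forces the tower geometry to degenerate along the induction: developed back onto $X$, the two sides of a tall, thin tower close up into a saddle connection $\gamma_n$ whose holonomy $\mathrm{hol}(\gamma_n)=(x_n,y_n)$ has horizontal component $x_n$ shrinking geometrically (the base width of $I_n$) while $y_n$ stays controlled.

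Applying $g_t$ multiplies horizontal holonomy by $e^{t}$ and vertical holonomy by $e^{-t}$, so I would pick the renormalization time $t_n$ with $e^{t_n}\asymp\sqrt{y_n/x_n}$ to balance the two components; then the squared flat length $e^{2t_n}x_n^2+e^{-2t_n}y_n^2\asymp x_ny_n$ tends to $0$ because $x_n\to 0$ while $y_n$ is controlled. Arranging that as $n$ varies the family $\{\gamma_n\}$ stays short across all sufficiently large times, one gets $\min_n|g_t\cdot\mathrm{hol}(\gamma_n)|\to 0$ as $t\to+\infty$, so the systole of $g_tX$ vanishes and the orbit is divergent. This contradicts $g_{t_n}(X,\omega)\to(Y,\omega_Y)$, since the limit $(Y,\omega_Y)$ lies in a fixed compact set and hence has positive systole. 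The resulting unique ergodicity of $T$ is exactly unique ergodicity of the vertical linear flow, which is the assertion of the theorem.

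I expect the genuinely hard step to be the middle one --- converting the purely measure-theoretic statement ``two invariant measures'' into \emph{quantitatively} short saddle connections that contract under $g_t$ and do so uniformly over all large times. This is where the real content of Masur's argument lies: one must control the combinatorics of Rauzy--Veech induction (or, equivalently, carry out Masur--Smillie's analysis of the boundary of the stratum) to guarantee both that the horizontal widths $x_n$ decay geometrically and that a genuinely non-trivial degenerating class survives precisely because the second invariant measure is non-proportional to the first. By contrast, the compactness criterion and the $g_t$-scaling used in the final paragraph are essentially formal once this controlled degeneration has been established.
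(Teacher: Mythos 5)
The paper does not prove this statement at all: it is quoted as background (Masur's criterion) with a citation to \cite{Ma2}, and the text moves on immediately. So there is no proof in the paper to compare yours against; your proposal has to be judged on its own terms, as a reconstruction of Masur's argument.

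Your skeleton --- arguing the contrapositive, invoking the systole/Mumford compactness criterion for strata, reducing to the first-return interval exchange, and the balancing computation $e^{2t_n}x_n^2+e^{-2t_n}y_n^2\asymp x_ny_n$ --- is the right one, and you correctly flag where the substance lies. But the way you set up the key degeneration contains a genuine error, not merely an omission. You want saddle connections $\gamma_n$ with horizontal holonomy $x_n\to 0$ ``while $y_n$ stays controlled,'' and you deduce $x_ny_n\to 0$ from that. If ``controlled'' means bounded, this situation is impossible unless $X$ already has a vertical saddle connection: the holonomy vectors of saddle connections form a discrete subset of $\mathbb{R}^2$, so $(x_n,y_n)\to(0,y_\infty)$ with $y_\infty$ finite forces the sequence to be eventually constant and vertical. (If a vertical saddle connection exists, divergence is immediate and no construction is needed.) In the minimal case one necessarily has $y_n\to\infty$, and then $x_n\to 0$ by itself gives nothing: Rauzy--Veech widths shrink for \emph{every} minimal interval exchange, uniquely ergodic or not, while tower areas satisfy $x_ny_n\asymp 1$ in the uniquely ergodic balanced regime. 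The entire content of Masur's criterion is to show that non-unique ergodicity forces the \emph{product} $x_ny_n\to 0$ --- i.e., produces subsurfaces or towers of decaying area whose boundary curves have decaying flat length --- and, further, that these curves can be chosen so that the time windows in which each is short under $g_t$ overlap and cover all sufficiently large $t$ (shortness only along some sequence of times does not contradict non-divergence, which merely supplies one convergent subsequence). Your proposal defers exactly these two points to ``controlling the combinatorics,'' but with a target statement ($y_n$ bounded) that cannot hold; as written, the argument does not close.
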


\noindent
Actually, this result is even more general, and holds in the non-stratified moduli space of flat surfaces.
Using this result, and a number of further more ideas, it is possible to show the following:

\begin{theorem}
[\cite{KMS}]
For every translation surface $X$, the linear flow is ergodic in almost every direction.
\end{theorem}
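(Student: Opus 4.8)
The plan is to deduce ergodicity in almost every direction from Masur's criterion (Theorem~\ref{thm:masur:unique:ergod}) combined with a non-divergence estimate for the Teichm\"uller geodesic flow $g_t$ on the stratum $\mathcal{H}_1(\kappa)$. First I would reduce the statement about directions to one about the vertical flow. Writing $r_\theta \in \mathrm{SO}(2) \subset \mathrm{SL}(2,\mathbb{R})$ for the rotation by $\theta$, the linear flow $\phi_t^\theta$ in direction $\theta$ on $(X,\omega)$ is conjugate, via the action of $r_\theta$ on the surface, to the vertical flow on $r_\theta X$. Thus it suffices to prove that for Lebesgue-almost every $\theta \in [0,2\pi)$ the vertical flow on $r_\theta X$ is uniquely ergodic, which is stronger than ergodicity.

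Second, I would invoke Masur's criterion in the form of Theorem~\ref{thm:masur:unique:ergod}: if the forward geodesic $t \mapsto g_t r_\theta X$ is non-divergent in $\mathcal{H}_1(\kappa)$, i.e.\ returns infinitely often to some fixed compact set, then the vertical flow on $r_\theta X$ is uniquely ergodic. The problem therefore reduces to showing that the set of \emph{divergent directions}
\[
D = \{\theta : g_t r_\theta X \text{ leaves every compact set as } t \to \infty\}
\]
has Lebesgue measure zero.

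The core of the argument is a quantitative non-divergence estimate. By Masur's compactness criterion, a subset of $\mathcal{H}_1(\kappa)$ is precompact exactly when the length of the shortest saddle connection, the systole, is bounded below; hence divergence of $g_t r_\theta X$ is equivalent to $\mathrm{sys}(g_t r_\theta X) \to 0$. For each saddle connection $v$ of $X$ with holonomy $\mathrm{hol}_\omega(v) = (a,b) \in \mathbb{C}$, the vector $g_t r_\theta \cdot (a,b)$ becomes short only when $\theta$ lies close to the direction of $v$ and $t$ ranges over a controlled window; the measure of such $\theta$ is small and decays with the length of $v$. Using the discreteness and quadratic growth of the set of holonomy vectors of saddle connections, I would sum these contributions over all saddle connections of length at most $R$, let $R \to \infty$, and apply a Borel--Cantelli argument to conclude that the set of $\theta$ for which the systole tends to zero is null. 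Together with the two reductions above, this yields ergodicity for almost every $\theta$.

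The hard part will be this quantitative non-divergence step. A single short saddle connection produces only a bounded excursion toward the cusp; genuine divergence requires an \emph{infinite} sequence of ever-shorter saddle connections, and the delicate point is to bound, uniformly in the depth of the excursion, the measure of directions admitting such a sequence. This is the Teichm\"uller-dynamical analogue of the Dani--Margulis non-divergence theorem, and its proof (as in Kerckhoff--Masur--Smillie) rests on a careful combinatorial analysis---via interval exchange transformations and Rauzy induction, or equivalently via the geometry of the systole along the geodesic---showing that the exceptional set contracts geometrically as the excursions deepen.
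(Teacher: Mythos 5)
First, a point of comparison: the paper does not actually prove this statement --- it is quoted as a known background result with the citation \cite{KMS}, preceded by the remark that it follows from Masur's criterion (Theorem~\ref{thm:masur:unique:ergod}) ``and a number of further more ideas''. So your proposal must be measured against the genuine Kerckhoff--Masur--Smillie argument, not against anything written in the paper. Your overall strategy is the right one, and exactly the route the paper alludes to: conjugate the flow in direction $\theta$ to the vertical flow on $r_\theta X$, invoke Masur's criterion, and reduce the theorem to showing that the set of directions $\theta$ for which $g_t r_\theta X$ diverges in $\mathcal{H}_1(\kappa)$ has Lebesgue measure zero.

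However, the quantitative step you propose fails as stated, and this is a genuine gap rather than a technicality. For a saddle connection with holonomy vector $v$ of length $R$, the set of $\theta$ for which $g_t r_\theta v$ ever becomes shorter than $\epsilon$ has measure $\asymp \epsilon^2/R^2$, since $\min_t \| g_t w \|^2 = 2|ab|$ for $w=(a,b)$, and the condition becomes $R^2 |\sin 2(\alpha+\theta)| < \epsilon^2$. By Masur's quadratic growth estimate there are $\asymp R^2$ saddle connections of length comparable to $R$, so each dyadic block of lengths contributes measure $\asymp \epsilon^2$ to your sum, and the sum over all scales \emph{diverges}. The first Borel--Cantelli lemma therefore gives nothing: you cannot conclude that almost every $\theta$ sees only finitely many $\epsilon$-short saddle connections, which is precisely what your argument needs (for all but the countably many $\theta$ lying in a saddle-connection direction, each saddle connection is short only on one bounded window of times and then stretches forever, so divergence forces infinitely many distinct saddle connections to become short). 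Your final paragraph in effect concedes the point: the actual content of \cite{KMS} is the inductive, stopping-time estimate showing that the measure of directions admitting ever-deeper excursions into the cusp decays geometrically with the depth, and that argument is named but not supplied. What you have is a correct reduction to, and an accurate description of, the hard theorem --- not a proof of it.
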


\subsubsection{The Hodge bundle and the Kontsevich-Zorich cocycle}

We look at the extended real Hodge bundle $\widehat{H}_{g}^{1}$ over $\mathcal{TH}_{1}(\kappa)$, with fibers $H_{1}(X,  \Sigma(\omega), \mathbb{R})$ over $(X,\omega,f)$.
We endow the bundle with the Gauss-Manin flat connection, whose flat trivializations we have seen earlier by identifying nearby fibers. 
This bundle is trivial, and diffeomoprhic to ${ \widehat{H}_{g}^{1} \cong \mathcal{TH}_{1}(\kappa) \times H_{1}(R, \Sigma, \mathbb{R}) }$.
We denote the trivial cocycle $ \widehat{B}^{(t)}: \widehat{H}_{g}^{1} \rightarrow \widehat{H}_{g}^{1} $,
$ \widehat{B}^{(t)}((X,\omega,f), [c])=\left(g_{t}(X,\omega,f), [c]\right) $ by $\widehat{B}^{(t)}$.

\vspace{3mm}
\noindent
The mapping class group $\mathrm{Mod}^{+}(R, \Sigma)$ acts on $\widehat{H}_{g}^{1}$. We have described its action by pre-composition on $\mathcal{TH}_{1}(\kappa)$, which gives the stratum $\mathcal{H}_{1}(\kappa)$,
while it acts by symplectic automorphisms on $H_{1}(R, \Sigma, \mathbb{R})$. So we look at the real Hodge bundle $H_{g}^{1} = (\mathcal{TH}_{1}(\kappa) \times H_{1}(R, \mathbb{R})) / \mathrm{Mod}^{+}(R, \Sigma)$.
Now, the map defined by applying $B^{(t)} = \widehat{B}^{(t)} / \mathrm{Mod}^{+}(R, \Sigma)$ on $H_{g}^{1}$ is called the {\em Kontsevich–Zorich cocycle} (KZ-cocycle for short).

\vspace{5mm}
\noindent
The KZ-cocycle will be of main interest in our paper. 
One looks at the cocycle over loci in $\mathcal{H}_{1}(\kappa)$ given by $\mathrm{SL}(2,\mathbb{R})$-orbit closures of points,
and on invariant sub-bundles. 

\subsubsection{Affine measures and orbit closures}

One of the main problems in the dynamics on the strata, is to classify $\mathrm{SL}(2,\mathbb{R})$-orbit closures and $\mathrm{SL}(2,\mathbb{R})$-ergodic probability measures. 
Luckily, there is an important result concerning this.

\vspace{3mm}
\noindent
We will call {\em affine probabilty measure} any $\mathrm{SL}(2,\mathbb{R})$-ergodic probability measure $\mu$ on $\mathcal{H}_{1}(\kappa)$ 
which is given by pulling back the Lebesgue measure (like we did in Section~\ref{mod:space:norm}).

The breakthrough papers of Eskin-Mirzakhani \cite{EM}, and Eskin-Mirzakhani-Mohammadi \cite{EMM}, give us the following Ratner-like classification result:
\begin{theorem}
\label{EMM}
The orbit closure of any point $(X,\omega) \in \mathcal{H}_{1}(\kappa)$, $\mathcal{M} = \overline{\mathrm{SL}(2, \mathbb{R}) X}$,
is the support of a unique affine probabilty measure $\mu_{\mathcal{M}}$.
\end{theorem}

\noindent
This result also gives further details on the geometry of the support of affine measures, which we omitted.
We will use just one application of this far-reaching result.
Let us call a direction $\theta \in [0,2 \pi)$ {\em Birkhoff generic} for $X$ if
\begin{equation}
\lim _{T \rightarrow \infty} \frac{1}{T} \int_{0}^{T} f\left(g_{t} e^{i \theta} \omega\right) d t=\int_{\mathcal{M}} f d \mu_{\mathcal{M}}.
\end{equation}
for every $f \in C_c(\mathcal{H}_{1}(\kappa))$.
Then we have the following
\begin{theorem}
[\cite{CE}]
\label{Chaika-Eskin}
Let $(X,\omega) \in \mathcal{H}_{1}(\kappa)$, with $\mathrm{SL}(2,\mathbb{R})$-orbit closure $\mathcal{M}$. 
Then, Lebsgue almost every direction $\theta \in [0,2 \pi)$ is Birkhoff generic for $X$.
\end{theorem}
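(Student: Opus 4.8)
The plan is to leverage the measure classification of Theorem~\ref{EMM} and to upgrade the \emph{averaged} equidistribution of an expanding circle to \emph{pointwise} equidistribution of the geodesic in almost every direction. Throughout write $\bar f = \int_{\mathcal{M}} f\,d\mu_{\mathcal{M}}$ for a test function $f \in C_c(\mathcal{H}_1(\kappa))$, and set
\[
F_T(\theta) = \frac{1}{T}\int_0^T f\left(g_t e^{i\theta}\omega\right)\,dt.
\]
Since $C_c(\mathcal{H}_1(\kappa))$ is separable, it suffices to prove that for each fixed $f$ the set of $\theta$ with $F_T(\theta)\not\to\bar f$ is Lebesgue-null; intersecting over a countable dense family then produces a single full-measure set of $\theta$ that is Birkhoff generic for every $f$ at once.

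First I would record the first-moment input. The orbit closure $\mathcal{M}$ carries the unique affine probability measure $\mu_{\mathcal{M}}$ of Theorem~\ref{EMM}, and the accompanying equidistribution theorem of Eskin--Mirzakhani--Mohammadi asserts that the $g_t$-pushforwards of the uniform probability measure $\nu$ on the circle $\{e^{i\theta}\omega\}_\theta$ converge weak-$*$ to $\mu_{\mathcal{M}}$. Since $\tfrac{1}{2\pi}\int_0^{2\pi}F_T(\theta)\,d\theta$ is exactly the time-average of $\langle (g_t)_*\nu,\,f\rangle$, this yields
\[
\frac{1}{2\pi}\int_0^{2\pi} F_T(\theta)\,d\theta \;\xrightarrow[T\to\infty]{}\; \bar f .
\]
The point hidden here is the absence of escape of mass, ensuring the limit is a probability measure so that the classification forces it to be $\mu_{\mathcal{M}}$; this is supplied by the quantitative non-divergence of the $\mathrm{SL}(2,\mathbb{R})$-action, the same mechanism underlying Theorem~\ref{thm:masur:unique:ergod}.

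Next I would run a second-moment argument to pass from the mean to almost every $\theta$. The decisive quantity is
\[
V(T) = \frac{1}{2\pi}\int_0^{2\pi}\bigl|F_T(\theta)-\bar f\bigr|^2\,d\theta ,
\]
and expanding the square rewrites $V(T)$ as a $\theta$-average of the two-point correlation $f(g_s e^{i\theta}\omega)\,f(g_t e^{i\theta}\omega)$ integrated over $0\le s,t\le T$. Because $g_t$ expands the circle at a definite exponential rate, two distinct directions are separated after bounded time, so the pair $(g_t e^{i\theta_1}\omega,\,g_t e^{i\theta_2}\omega)$ should equidistribute \emph{independently}, toward $\mu_{\mathcal{M}}\times\mu_{\mathcal{M}}$, off the diagonal. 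I would derive this decorrelation by applying the classification of Theorem~\ref{EMM} to the product system $\mathcal{M}\times\mathcal{M}$: identify the possible limiting self-joinings of the diagonal circle under $(g_t,g_t)$ and rule out all but the product measure using the expansion. Granting it, $V(T)\to 0$; along a geometric sequence $T_n=\lambda^n$ the decay is summable, so Chebyshev together with Borel--Cantelli gives $F_{T_n}(\theta)\to\bar f$ for a.e. $\theta$. A final interpolation, using that $T\mapsto T F_T(\theta)$ is $\|f\|_\infty$-Lipschitz and letting the ratio $\lambda\downarrow 1$ along a countable sequence, upgrades this to convergence for all $T\to\infty$.

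I expect the decorrelation step to be the main obstacle. It is precisely where the qualitative measure classification must be exploited, rather than any quantitative mixing rate for $g_t$ (which is not available from the results at hand), and controlling the limiting joinings of two expanding directions is the genuinely new difficulty beyond the single-circle equidistribution. Everything else — the reduction to a countable family, the first-moment statement, and the Borel--Cantelli plus Lipschitz interpolation — is standard once the variance bound $V(T)\to 0$ is in place.
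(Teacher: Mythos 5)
The paper does not prove this statement at all --- it is quoted from \cite{CE} --- so your attempt should be measured against the actual Chaika--Eskin argument. Your reduction (separability, first moment via circle equidistribution, then a variance bound $V(T)\to 0$ plus Chebyshev/Borel--Cantelli and Lipschitz interpolation) places the whole weight on the decorrelation step, and there the proposal has a genuine gap. Theorem~\ref{EMM} classifies affine $\mathrm{SL}(2,\mathbb{R})$-ergodic measures \emph{on a stratum} $\mathcal{H}_{1}(\kappa)$; the product $\mathcal{M}\times\mathcal{M}$ is not a stratum, and no Eskin--Mirzakhani--Mohammadi-type classification exists for the diagonal action on products --- classifying such self-joinings is a hard open problem, not a corollary of the cited theorem. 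Worse, even if one had such a classification it would not apply to your limits: weak-$*$ limits of the two-point empirical measures arising from $V(T)$ (same $\theta$, two times, or two directions at one time --- your sketch slides between the two) are invariant only under the diagonal geodesic flow $(g_s,g_s)$, not under $P\times P$ or diagonal $\mathrm{SL}(2,\mathbb{R})$, and geodesic-flow-invariant measures are far too abundant to classify; the extra invariance that powers the EMM equidistribution comes precisely from the circle average, which is absent in a two-point function. Finally, even granting qualitative decorrelation, $V(T)\to 0$ \emph{without a rate} does not give summability of $V(\lambda^n)$, so Chebyshev plus Borel--Cantelli does not close; a quantitative rate is even further out of reach of the qualitative theorems you invoke.

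What Chaika--Eskin actually do avoids any decorrelation statement. The key computation $g_t r_\phi g_{-t}=\bigl(\begin{smallmatrix}\cos\phi & -e^{2t}\sin\phi\\ e^{-2t}\sin\phi & \cos\phi\end{smallmatrix}\bigr)$ shows that an arc of directions of length $e^{-2t}$ is carried by $g_t$, up to bounded distortion, onto a unit-size horocycle/sector piece based at the orbit point $g_t e^{i\theta_0}\omega$. Since that base point lies on the \emph{same} $\mathrm{SL}(2,\mathbb{R})$-orbit, its orbit closure is still $\mathcal{M}$, so the (time-averaged) EMM sector-equidistribution theorem applies at it and shows that the average over each scale-$n$ arc of the next time-window Birkhoff average is uniformly close to $\int f\,d\mu_{\mathcal{M}}$. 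This makes the successive window averages into bounded almost-martingale differences with respect to the filtration by dyadic arcs, and the martingale convergence theorem (plus the Kronecker/interpolation steps you do have) gives a.e.\ convergence with no decorrelation and no rates. The idea you are missing is this renormalization: re-apply the \emph{first-moment} equidistribution theorem at every scale along the orbit, using the hyperbolic expansion of arcs, instead of trying to prove independence between directions or times.
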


\subsubsection{Strata of quadratic differentials}

We can also discuss the strata of non-orientable quadratic differentials. Using the double covers of half-translation surfaces, it is an easy generalization of the strata of abelian differentials.
This will allow us to produce invariant sub-bundles for the KZ-cocycle.

We use notations similar to those in Section~\ref{mod:space:norm}. The combinatorial datum now encodes the orders of zeros and simple poles of quadratic differentials.
Put $n = 2g + |\Sigma| - 2$.
Recall the definitions of double covers and involutions from Section~\ref{half:double:inv}.

\vspace{3mm}
\noindent
The stratified Teichmüller space of quadratic differentials $\mathcal{T} \mathcal{Q}(\kappa)$ consists of triplets $(X,q,f)$,
where $f:R\rightarrow X$ is a diffeomorphism,
and $q$ is a non-orientable quadratic differential on $X$ whose zeros are $f(\Sigma)$, and each $f(\sigma_i)$ is a zero of order $k_i$.
Two points $(X_1,q_1,f_1),(X_2,q_2,f_2)$ are said to be equivalent if
$f_2 \circ f_1^{-1} : X_1 \rightarrow X_2$ is isotopic (through maps sending each $f_1(\sigma_i)$ to $f_2(\sigma_i)$) to a diffeomorphism $h: X_1 \rightarrow X_2$
so that $h^*q_2 = q_1$.
The stratified moduli space of quadratic differentials is then $\mathcal{Q}(\kappa) = \mathcal{TQ}(\kappa) / \operatorname{Mod}(R, \Sigma)$.

\vspace{4mm}
\noindent
We fix a non-orientable quadratic differential $q_0$ on $R$ matching $\kappa$, and let $\widehat{R}$ be its double-cover with combinatorial datum $\widetilde{\kappa}$.
We look at the Teichmüller space $\mathcal{TH}(\widetilde{\kappa})$.
One can find a locus $\mathcal{L}_{\mathcal{TQ}(\kappa)} \subset \mathcal{TH}(\widetilde{\kappa})$
which corresponds to $\mathcal{TQ}(\kappa)$, and identify $\mathcal{Q}(\kappa)$ with its image in the moduli space $\mathcal{L}_{\mathcal{Q}(\kappa)}$.

Each $(X,q,f) \in \mathcal{TQ}(\kappa)$ has a double cover $(\widehat{X},\omega,\widetilde{f}) \in \mathcal{L}_{\mathcal{TQ}(\kappa)}$ 
with ${ \widetilde{f}: \widehat{R} \rightarrow \widehat{X} }$ a lift of $f$.
We identify the subspace ${ H_{1}^{\iota-}(\widehat{X}, \widetilde{\Sigma}(\omega), \mathbb{C}) \subset H_{1}(\widehat{X}, \widetilde{\Sigma}(\omega), \mathbb{C}) }$ 
with $H_{1}(X, \Sigma(q), \mathbb{C})$, as in Section~\ref{half:split:hom:sec}.

\vspace{4mm}
\noindent
Using $\mathcal{L}_{\mathcal{TQ}(\kappa)}, \mathcal{L}_{\mathcal{Q}(\kappa)}$ we get complex structure on $\mathcal{TQ}^{1}(\kappa), \mathcal{Q}_{1}(\kappa)$.
The normalization of orientation and area to $\mathcal{TQ}^{1}(\kappa), \mathcal{Q}_{1}(\kappa)$ are done similarly,
and they are endowed with Lebesgue measures, introduced as before. 

The $\mathrm{SL}(2, \mathbb{R})$-action on $\mathcal{TQ}^{1}(\kappa), \mathcal{Q}_{1}(\kappa)$ is defined by post composition,
$A (X,q,f_X) = (A X, Aq, f_A \circ f_X)$, $A \in \mathrm{SL}(2,\mathbb{R})$. We identify it with the action on $\mathcal{L}_{\mathcal{TQ}_{1}(\kappa)},\mathcal{L}_{\mathcal{Q}_{1}(\kappa)}$. It is continuous, and preserves the Lebesgue measures.

\vspace{5mm}
\noindent
We look at the real extended Hodge bundle $\widehat{Q}_{g}^{1}$ over $\mathcal{TQ}^{1}(\kappa)$, with fibers $H_{1}(X, \Sigma(q), \mathbb{R})$ over $(X,q,f)$.
It corresponds to the sub-bundle $\widehat{H}_{g}^{1,\iota-}(\mathcal{L}_{\mathcal{TQ}(\kappa)})$ over $\mathcal{L}_{\mathcal{TQ}(\kappa)}$
with fibers $H_{1}^{\iota-}(\widehat{X}, \widetilde{\Sigma}(\omega), \mathbb{R})$. 

As before, we denote $Q_{g}^{1} = \left( \mathcal{TQ}^{1}(\kappa) \times H_{1}(R,\Sigma, \mathbb{R}) \right) / \operatorname{Mod}^{+}(R, \Sigma)$,
and $\widehat{B}^{(t)}: \widehat{Q}_{g}^{1} \rightarrow \widehat{Q}_{g}^{1}$,
${ \widehat{B^{(t)}}\left((X,q,f), c \right)=\left(g_{t}(X,q,f), c\right) }$. 
Then the KZ-cocycle on $Q_{g}^{1}$ is defined by
$B^{(t)} = \widehat{B}^{(t)} / \operatorname{Mod}^{+}(R, \Sigma)$.

Identifying the KZ-cocycle of $(X,q)$ in $Q_{g}^{1}$ with that of $(\widehat{X},\omega)$ in $ H_{g}^{1,\iota+}(\mathcal{L}_{\mathcal{Q}_1(\kappa)})$,
We get the following:

\begin{corollary}
\label{cor:quad:hodge:split}
There is a symplectic splitting of the Hodge bundle $H_{g}^{1}(\mathcal{L}_{\mathcal{Q}_1(\kappa)})$,
$ H_{g}^{1}(\mathcal{L}_{\mathcal{Q}_1(\kappa)}) = H_{g}^{1,\iota+}(\mathcal{L}_{\mathcal{Q}_1(\kappa)})  \oplus H_{g}^{1,\iota-}(\mathcal{L}_{\mathcal{Q}_1(\kappa)})$,
invariant for the KZ-cocycle.
Specifically, having ${ (X,\omega) \in \mathcal{H}_{1}(\widetilde{\kappa}) }$ with involution $\iota$ and orbit closure $\mathcal{M}$,
we can take the KZ-cocycle on the sub-bundle $H_{g}^{1,\iota+}(\mathcal{M})$.
\end{corollary}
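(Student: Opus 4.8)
The plan is to read off the splitting fiberwise from the $\pm 1$-eigenspace decomposition of $\iota_*$ established in Subsection~\ref{half:split:hom:sec}, and then to verify that this pointwise decomposition assembles into a genuine splitting of sub-bundles over the locus $\mathcal{L}_{\mathcal{Q}_1(\kappa)}$ which is invariant under the KZ-cocycle. Recall that every point of $\mathcal{L}_{\mathcal{Q}_1(\kappa)}$ is a double cover $(\widehat{X},\omega)$ carrying the involution $\iota$ with $d\iota = -I$, and that on each fiber of the Hodge bundle the induced map $\iota_*$ is itself an involution whose $(+1)$- and $(-1)$-eigenspaces are precisely the summands $H_g^{1,\iota+}$ and $H_g^{1,\iota-}$ of the decompositions recorded in formula~(\ref{half:split:hom}) (and its absolute analogue).

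First I would check that the dynamics preserves the locus, so that orbit closures stay inside it. Since $d\iota = -I$ and $A(-I) = (-I)A$ for every $A \in \mathrm{SL}(2,\mathbb{R})$, the involution commutes with the $\mathrm{SL}(2,\mathbb{R})$-action: if $(\widehat{X},\omega)$ admits $\iota$, then so does $A(\widehat{X},\omega)$. Hence $\mathcal{L}_{\mathcal{Q}_1(\kappa)}$ is $\mathrm{SL}(2,\mathbb{R})$-invariant, and for $(X,\omega) \in \mathcal{H}_1(\widetilde{\kappa})$ with involution $\iota$ the orbit closure $\mathcal{M} = \overline{\mathrm{SL}(2,\mathbb{R})(X,\omega)}$ is contained in it, so the involution is defined at every point of $\mathcal{M}$.

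Next I would assemble and test the sub-bundles. On the trivialized extended bundle the fiber is the fixed space $H_1(\widehat{R},\widetilde{\Sigma},\mathbb{R})$, on which $\iota_*$ is a fixed automorphism; since the trivial cocycle $\widehat{B}^{(t)}$ acts as the identity on this fiber, $\iota_*$ commutes with $\widehat{B}^{(t)}$, and after passing to the $\operatorname{Mod}^{+}(R,\Sigma)$-quotient it commutes with $B^{(t)}$. The eigenspaces of the involution $\iota_*$ have locally constant dimension, determined topologically by $\widetilde{\kappa}$, so $H_g^{1,\iota+}$ and $H_g^{1,\iota-}$ are sub-bundles and $H_g^1(\mathcal{L}_{\mathcal{Q}_1(\kappa)}) = H_g^{1,\iota+} \oplus H_g^{1,\iota-}$; invariance under the KZ-cocycle is then immediate from $\iota_* B^{(t)} = B^{(t)} \iota_*$, as the cocycle preserves each eigenspace. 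That the splitting is symplectic follows because $\det(d\iota) = \det(-I) = 1 > 0$, so by formula~(\ref{form:intersection:orientation}) the map $\iota_*$ preserves the intersection pairing $i(\cdot,\cdot)$; its $\pm 1$-eigenspaces are therefore symplectically orthogonal, which is exactly the vanishing $i([\gamma^+],[\gamma^{*-}]) = 0$ (and its symmetric counterpart) noted after formula~(\ref{half:split:hom}), and this orthogonality is transported by the cocycle since $B^{(t)}$ preserves the pairing.

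The main point to be careful about is the descent to the moduli space and the well-definedness of the sub-bundle over $\mathcal{M}$: one must confirm that $\iota$ is intrinsically attached to each surface of the locus, rather than to a chosen marked representative, so that the eigenspace splitting is respected by the monodromy of $\mathcal{L}_{\mathcal{Q}_1(\kappa)}$ and descends through the $\operatorname{Mod}^{+}(R,\Sigma)$-action. Granting this, restricting the invariant splitting to $\mathcal{M}$ yields the sub-bundle $H_g^{1,\iota+}(\mathcal{M})$ on which the KZ-cocycle may be taken, as claimed.
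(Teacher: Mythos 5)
Your argument is correct, but it reaches the corollary by a different route than the paper. The paper gives no standalone proof: the statement is presented as the upshot of the subsection it sits in, where one constructs the quadratic Teichm\"uller space $\mathcal{TQ}(\kappa)$, its Hodge bundle $Q_g^{1}$ with fibers $H_{1}(X,\Sigma(q),\mathbb{R})$, and the quadratic KZ-cocycle as a $\operatorname{Mod}^{+}(R,\Sigma)$-quotient, and then identifies this bundle with the anti-invariant sub-bundle of the Hodge bundle over the locus $\mathcal{L}_{\mathcal{Q}_1(\kappa)}$; the splitting and its KZ-invariance are read off from that identification together with the symplectic decomposition of Subsection~\ref{half:split:hom:sec}. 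You instead stay entirely on the abelian side: $\iota_*$ is constant in the Gauss--Manin trivialization over the locus (because the markings $\widetilde{f}$ intertwine the deck involutions of $\widehat{R}$ and $\widehat{X}$), hence commutes with $\widehat{B}^{(t)}$ and, after the quotient, with $B^{(t)}$; its eigenbundles give the splitting, and $\det(d\iota)=1$ together with formula~(\ref{form:intersection:orientation}) gives symplectic orthogonality of the eigenspaces. Your route is more elementary and makes explicit two things the paper leaves implicit (the $\mathrm{SL}(2,\mathbb{R})$-invariance of the locus, and the commutation mechanism behind cocycle-invariance); the paper's route, in exchange, disposes automatically of the one step you flag and then only ``grant'', namely descent of the splitting through $\operatorname{Mod}^{+}(R,\Sigma)$: there the anti-invariant sub-bundle is by construction the image of a bundle defined by a quotient on the quadratic side, so monodromy-invariance needs no separate argument. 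Two points to tighten in your write-up: the containment $\mathcal{M}\subset\mathcal{L}_{\mathcal{Q}_1(\kappa)}$ requires the locus to be closed in $\mathcal{H}_{1}(\widetilde{\kappa})$, not merely $\mathrm{SL}(2,\mathbb{R})$-invariant (closedness does hold for such double-cover loci, but it is an additional fact, also used silently by the paper); and intrinsicness of $\iota$ can fail at surfaces admitting extra affine symmetries, which is precisely why formalizing the splitting via $Q_g^{1}$ over $\mathcal{TQ}^{1}(\kappa)$, as the paper does, is the cleanest way to close your ``granting this'' step.
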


\section{$\mathbb{Z}^k$-covers}

\subsection{Infinite translation surfaces}

We generalize the definition of a translation surface to {\em translation surfaces of infinite type}, surfaces which no longer need be compact. 

\begin{definition}
\label{def:infinite:trans:constr}
(Constructive). Let $\Omega_{\infty} = \{O_i\}_{i=1}^{\infty}$ be a countable configuration of polygons.
A translation surface of infinite type $X_{\infty}$ consists of a configuration $\Omega_{\infty}$ and a pairing $\mathfrak{T} : e(\Omega_{\infty}) \rightarrow e(\Omega_{\infty})$
like in Definition~\ref{def:trans:constr}. We look at $\Omega_{\infty} / \mathfrak{T}$. We assume that the resulting surface is connected.
Then  $\Omega_{\infty} / \mathfrak{T}$ is a translation surface of infinite type $X_{\infty}$.
\end{definition}

In this definition, we allowed ourselves to ignore some issues which do not arise in the context of the wind-tree models, like infinitely ramified singularities.
For brevity, we will also use the term `infinite translation surface'. The analytic and geometric definitions can be generalized as well, by dropping the compactness requirement.

\vspace{5mm}
\noindent
We can also introduce infinite parking garages.\\
An infinite parking garage is an immersion $w_{P_{\infty}} : P_{\infty} \rightarrow \mathbb{R}^{2}$, where $P_{\infty}$ is a connected and closed surface with boundary,
so that for each compact subset $K \subset \mathbb{R}^2$, $w_{P}(\partial P)  \cap K$ is a finite union of linear segments. If $w_{P_{\infty}}$ is an embedding, we also call it `infinite polygon'.
We will usually describe an infinite parking garage using the above constructive definition, where a subset of $e(\Omega_{\infty})$ consisting of linear edges, is not paired and serves as boundary.

For example, the wind-tree model $\mathrm{WT}(\Lambda, \Omega)$ in $\mathbb{R}^2$ considered in the Introduction, is an infinite polygon,
with the boundary consisting of the obstacles.

The billiard unfolding procedure can also be carried similarly, if $P_{\infty}$ is given constructively,
though we would need to require $\xi(P_{\infty})$ to be finite. 
Only then we can check if $P_{\infty}$ is rational, and proceed as before,
using the identification law $\mathfrak{T}(\rho_i e_j) = \rho_i \rho_{e_j}e_j$.

\subsection{$\mathbb{Z}^k$-covers}
\label{sec:z:d:cover}

For a cover of a translation surface, we would like to have a nice description, in terms of Definition~\ref{def:infinite:trans:constr}, using the configuration of the finite surface.
That is the case for $\mathbb{Z}^k$-covers.

\begin{definition}
\cite{HoW}
Let $X$ be a translation surface.
Pick a $k$-tuple of classes $\overline{\gamma} = ([\gamma_1], \dots, [\gamma_k]), [\gamma_i] \in H_1(X,\Sigma,\mathbb{Z})$.
Assume $\{[\gamma_i]\}_i$ are linearly independent. Consider the homomorphism $\phi_{\overline{\gamma}}:\pi_1(X\backslash \Sigma, x_0) \rightarrow \mathbb{Z}^k$,
$ \varphi \rightarrow ( i([\gamma_1],[\varphi]),  \, \dots, \,   i([\gamma_k],[\varphi])) $.
Take the cover $(\widetilde{X}_{\overline{\gamma}},\widetilde{x}_0)$ corresponding to $\text{ker}(\phi_{\overline{\gamma}}) \subset \pi_{1}(X \backslash \Sigma, x_0)$.
Then $\widetilde{X}_{\overline{\gamma}}$ is called a {\em $\mathbb{Z}^k$-cover of $X$}.
\end{definition}
The terminology comes from the fact that the deck group of the cover is isomorphic to $\mathbb{Z}^k$.


\begin{figure}[htbp]
\begin{tikzpicture}[ every node/.style = {opacity = 1, black, above left}]

\coordinate (rec1) at (0,0);
\draw [name path=rectangle1]  (rec1) rectangle +(2,2);
\draw [->,red,decorate,decoration={snake,amplitude=2.9mm,segment length=5mm,post length=1mm}](rec1) +(1,0) -- +(1,2);

\draw [->] (2.5,1) -- (3.5,1);

\coordinate (rec2) at (4,0);
\draw [name path=rectangle2_edge1]  (rec2) +(0,0) -- +(1,0);
\draw [name path=rectangle2_edge2]  (rec2) +(0,0) -- +(0,2);
\draw [name path=rectangle2_edge3]  (rec2) +(0,2) -- +(1,2);
\draw [decorate,decoration={snake,amplitude=2.9mm,segment length=5mm,post length=1mm}](rec2) +(1,0) -- +(1,2);
\coordinate (rec2_s) at (4+0.5,0);
\draw [name path=rectangle2_edge1]  (rec2_s) +(1,2) -- +(2,2);
\draw [name path=rectangle2_edge2]  (rec2_s) +(2,2) -- +(2,0);
\draw [name path=rectangle2_edge3]  (rec2_s) +(2,0) -- +(1,0);
\draw [decorate,decoration={snake,amplitude=2.9mm,segment length=5mm,post length=1mm}](rec2_s) +(1,0) -- +(1,2);

\draw [->] (7,1) -- (8,1);

\coordinate (rec3) at (8+0.5,0);
\draw [decorate,decoration={snake,amplitude=2.9mm,segment length=5mm,post length=1mm}](rec3) +(0,0) -- +(0,2);
\draw [name path=rectangle3_edge1]  (rec3) +(0,2) -- +(2,2);
\draw [name path=rectangle3_edge2]  (rec3) +(1,0) -- +(1,2);
\draw [name path=rectangle3_edge3]  (rec3) +(0,0) -- +(2,0);
\draw [decorate,decoration={snake,amplitude=2.9mm,segment length=5mm,post length=1mm}](rec3) +(2,0) -- +(2,2);

\coordinate (rec4) at (0,-3);
\draw [name path=rectangle4_edge1]  (rec4) +(0,0) -- +(10,0);
\draw [name path=rectangle4_edge1]  (rec4) +(0,2) -- +(10,2);
\node[] at  (0,-1.18) {\ldots};
\node[] at  (0,-3.16) {\ldots};
\node[] at  (10.8,-1.18) {\ldots};
\node[] at  (10.8,-3.16) {\ldots};
\draw [decorate,decoration={snake,amplitude=2.9mm,segment length=5mm,post length=1mm}](rec4) +(1,0) -- +(1,2);
\draw [decorate,decoration={snake,amplitude=2.9mm,segment length=5mm,post length=1mm}](rec4) +(3,0) -- +(3,2);
\draw [decorate,decoration={snake,amplitude=2.9mm,segment length=5mm,post length=1mm}](rec4) +(5,0) -- +(5,2);
\draw [decorate,decoration={snake,amplitude=2.9mm,segment length=5mm,post length=1mm}](rec4) +(7,0) -- +(7,2);
\draw [decorate,decoration={snake,amplitude=2.9mm,segment length=5mm,post length=1mm}](rec4) +(9,0) -- +(9,2);

\end{tikzpicture}
\caption{A $\mathbb{Z}$-cover defined by a cycle on the torus.}
\end{figure}
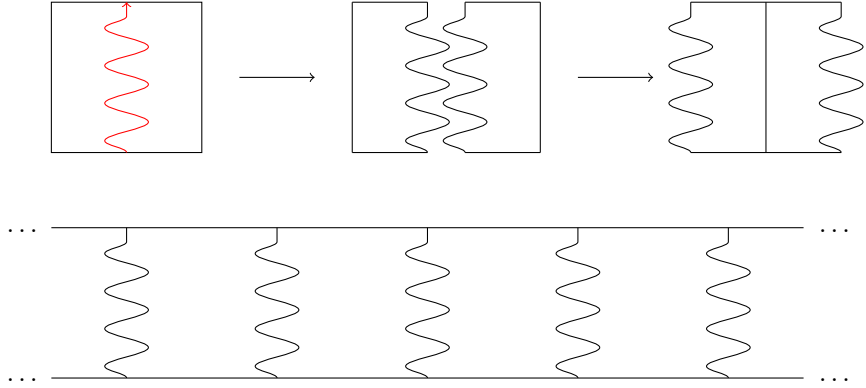

\vspace{5mm}
\noindent
For a $\mathbb{Z}^k$-cover we can describe its construction as in Definition~\ref{def:infinite:trans:constr}.\\
This is where we need the construction of homology basis using a configuration, from Section~\ref{basis:homology}.
As in that section, we modify $\Omega$ by gluing in parallel the polygons into one connected surface with boundary, and possibly add edges.
We still denote the new configuration by $\Omega$ (and soon we will see why).
We work with the configuration $\Omega_{\mathbb{Z}^k} = \{\Omega(\overline{l}) = \Omega(l_1, \dots, l_k)  | \, \overline{l} \in \mathbb{Z}^k \}$.
Recall that $h(e_i) = h(\mathfrak{T}(e_i))$, ${ [\gamma_{e_i}^{*}] = [\gamma_{\mathfrak{T}(e_i)}^{*}] }$, and $\upsilon_{\mathfrak{T}(e_i)} = -\upsilon_{e_i}$.
Then the pairing $\mathfrak{T}_{\mathbb{Z}^{k}}: e(\Omega_{\mathbb{Z}^{k}}) \rightarrow e(\Omega_{\mathbb{Z}^{k}})$
is given by
\begin{equation}
\label{z:k:cover:constr}
\mathfrak{T}_{\mathbb{Z}^{k}}(e_i(l_1, \dots, l_d)) = \mathfrak{T}(e_i)(l_1 +  \upsilon_{e_i} i([\gamma_1], [\gamma_{e_i}^{*}]), \dots, l_d + \upsilon_{e_i} i([\gamma_k],[\gamma_{e_i}^{*}])).
\end{equation}

\vspace{2mm}
\noindent
Luckily, this process simplifies in the $\mathbb{Z}^k$-covers we will encounter.\\
Having $\overline{\gamma} \subset \text{span}_{\mathbb{Z}} \{ h(e_i)\}_{i=1}^{d-r_{\mathrm{o}}+1}$ (that is, the original edges before we modified $\Omega$),
the formula shows that we don't need to add any edges to $\Omega$.
Also, we don't need to glue the polygons in $\Omega$, and the edges $\{ e_i \}_{i=d - (r_{\mathrm{o}} -1)}^{d}$, which are not part of the basis, are identified trivially: 
$\mathfrak{T}_{\mathbb{Z}^{k}}(e_i(\overline{l})) = \mathfrak{T}(e_i)(\overline{l})$.

Therefore, we can avoid modifying the configuration $\Omega$.
Also, we can work with additional marked points in $H_{1}(X, \Sigma \cup \Sigma_{\mathrm{m}}, \mathbb{Z})$, and still the formula holds the same.

\vspace{6mm}
\noindent
Let us mention two results about $\mathbb{Z}^k$-covers from \cite{HoW,DVH}.
We will use the homology group $H_1(X, \Sigma, \mathbb{Q})$ so that we can consider vector spaces.
Given $\overline{\gamma}$, let $V_{\overline{\gamma}}$ be the space $V_{\overline{\gamma}} \subset  H_1(X, \Sigma, \mathbb{Q}) = \text{span}_{\mathbb{Q}}(([\gamma_{1}], \ldots,[\gamma_{k}])$.

\begin{corollary}
$\widetilde{X}_{\overline{\gamma}_1} \cong \widetilde{X}_{\overline{\gamma}_2} \Longleftrightarrow  V_{\overline{\gamma}_1} = V_{\overline{\gamma}_2}$, where the first isomorphism is of translation surfaces.
\end{corollary}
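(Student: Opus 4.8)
The plan is to reduce the statement to a comparison of the defining subgroups of the two covers, and then to compute those subgroups directly from the intersection pairing. By definition, $\widetilde{X}_{\overline{\gamma}}$ is the cover of $X\backslash\Sigma$ associated with the normal subgroup $N_{\overline{\gamma}} := \ker(\phi_{\overline{\gamma}}) \subset \pi_1(X\backslash\Sigma, x_0)$, completed and equipped with the pulled-back translation structure $\widetilde{\omega} = \mathbb{P}^*\omega$. Since this structure is pulled back from $X$ through the projection $\mathbb{P}$, an isomorphism of covers over $X$ is automatically a translation equivalence, and conversely a translation equivalence commuting with the projections is an isomorphism of covers. Thus, working in the natural category of translation covers of $X$, standard covering-space theory reduces the claim to $N_{\overline{\gamma}_1} = N_{\overline{\gamma}_2} \iff V_{\overline{\gamma}_1} = V_{\overline{\gamma}_2}$. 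Here I use that $N_{\overline{\gamma}}$ is normal --- being the kernel of a homomorphism into the abelian group $\mathbb{Z}^k$ --- so that the usual `conjugate subgroup' condition collapses to plain equality.

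The heart of the argument is then to identify $N_{\overline{\gamma}}$ intrinsically. A loop $\varphi$ lies in $N_{\overline{\gamma}}$ exactly when $i([\gamma_j],[\varphi]) = 0$ for every $j = 1, \dots, k$. By bilinearity of the Poincaré--Lefschetz pairing $i$, these finitely many integral conditions are equivalent to $i(v,[\varphi]) = 0$ for all $v$ in the whole rational span $V_{\overline{\gamma}} = \mathrm{span}_{\mathbb{Q}}([\gamma_1], \dots, [\gamma_k])$. Hence $N_{\overline{\gamma}}$ is precisely the preimage in $\pi_1(X\backslash\Sigma, x_0)$ of the annihilator $V_{\overline{\gamma}}^{\perp} \subset H_1(X\backslash\Sigma, \mathbb{Q})$ under $i$, and in particular it depends only on the subspace $V_{\overline{\gamma}}$ and not on the chosen generating tuple. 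This already yields the implication $\Longleftarrow$. For $\Longrightarrow$ I would invoke the non-degeneracy of the Poincaré--Lefschetz pairing between $H_1(X,\Sigma,\mathbb{Q})$ and $H_1(X\backslash\Sigma,\mathbb{Q})$: the map $V \mapsto V^{\perp}$ is an inclusion-reversing bijection on subspaces, so $N_{\overline{\gamma}_1} = N_{\overline{\gamma}_2}$ forces $V_{\overline{\gamma}_1}^{\perp} = V_{\overline{\gamma}_2}^{\perp}$ and hence $V_{\overline{\gamma}_1} = V_{\overline{\gamma}_2}$. This completes the reduction, modulo the categorical point raised below.

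The step I expect to be the main obstacle is justifying that `$\cong$ as translation surfaces' is genuinely equivalent to `$\cong$ as covers of $X$', since an abstract translation equivalence $f: \widetilde{X}_{\overline{\gamma}_1} \to \widetilde{X}_{\overline{\gamma}_2}$ need not a priori commute with the two projections to $X$. To handle this I would show that the covering data is intrinsic to the translation structure: the deck group is a subgroup of translation automorphisms isomorphic to $\mathbb{Z}^k$ acting freely and properly discontinuously with compact quotient $X$, and I would argue that $f$ must carry the deck group of the first cover into that of the second and therefore descend to a translation automorphism $h$ of $X$ fixing $\Sigma$ setwise. In the category of covers over $X$ --- the natural setting for $\mathbb{Z}^k$-covers, and the one intended here --- $h$ is forced to be the identity and the reduction above applies verbatim. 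The delicate part, which I would need to argue carefully (or else circumvent by fixing the projection as part of the data), is exactly this recovery of the deck group and of $\mathbb{P}$ from the bare translation structure of $\widetilde{X}_{\overline{\gamma}}$; everything else is formal once $N_{\overline{\gamma}}$ has been computed from $V_{\overline{\gamma}}$.
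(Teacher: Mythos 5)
The paper does not actually prove this corollary: it is quoted as a known result from \cite{HoW,DVH} (``Let us mention two results about $\mathbb{Z}^k$-covers\dots''), so there is no internal proof to compare with, and your argument has to stand on its own. Most of it does. Your identification of the defining subgroup is correct: by bilinearity of $i$, the kernel $\ker(\phi_{\overline{\gamma}})$ is the preimage in $\pi_1(X\backslash\Sigma,x_0)$ of the annihilator of $V_{\overline{\gamma}}$, hence depends only on the span; together with normality and the perfect pairing between $H_1(X,\Sigma,\mathbb{Q})$ and $H_1(X\backslash\Sigma,\mathbb{Q})$, this correctly gives the implication $V_{\overline{\gamma}_1}=V_{\overline{\gamma}_2}\Rightarrow\widetilde{X}_{\overline{\gamma}_1}\cong\widetilde{X}_{\overline{\gamma}_2}$ and the equivalence ``same kernel $\iff$ same span''.

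The genuine gap is exactly the one you flag in your last paragraph, and it cannot be closed in the form you propose. Even granting that a translation equivalence $f:\widetilde{X}_{\overline{\gamma}_1}\rightarrow\widetilde{X}_{\overline{\gamma}_2}$ carries deck group to deck group and descends, what you obtain is a translation automorphism $h$ of $X$ with $h_*V_{\overline{\gamma}_1}=V_{\overline{\gamma}_2}$, and nothing forces $h=\mathrm{id}$; asserting that ``in the category of covers over $X$, $h$ is forced to be the identity'' is circular, since working in that category is precisely the stronger hypothesis (isomorphism commuting with the projections) that the statement, read literally as isomorphism of translation surfaces, does not grant you. Moreover the obstruction is real, not a missing lemma: whenever $X$ admits a nontrivial translation automorphism $g$ fixing $\Sigma$, the surfaces $\widetilde{X}_{\overline{\gamma}}$ and $\widetilde{X}_{g_*\overline{\gamma}}$ are always translation equivalent --- the map $g\circ\mathbb{P}$ exhibits $(\widetilde{X}_{\overline{\gamma}},\mathbb{P}^*\omega)$ as the cover associated to $g_*N_{\overline{\gamma}}=N_{g_*\overline{\gamma}}$, with the same pulled-back structure because $g^*\omega=\omega$ --- yet $g_*V_{\overline{\gamma}}$ need not equal $V_{\overline{\gamma}}$. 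Concretely, glue two copies of the unit-square torus along a common horizontal slit; the swap of the two copies is a translation involution, and for $[\gamma_1]$ a horizontal core curve of the first copy the spans $V_{[\gamma_1]}$ and $V_{g_*[\gamma_1]}$ differ although the two $\mathbb{Z}$-covers are translation equivalent. So the forward implication is false for such $X$ under the literal reading; the statement is only correct when $\cong$ means isomorphism of covers of $X$, i.e.\ your fallback of ``fixing the projection as part of the data'' is not an optional simplification but the necessary interpretation --- and under that interpretation your kernel computation already constitutes a complete proof.
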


\begin{corollary}
\label{cor:action:z:cover}
An affine automorphism $f \in \mathrm{Aff}(X)$ lifts to an affine automorphism
$\widetilde{f} \in \mathrm{Aff}(\widetilde{X})$ if and only if $f_*V_{\overline{\gamma}} = V_{\overline{\gamma}}$.
\end{corollary}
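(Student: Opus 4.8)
The plan is to reduce the statement to the standard lifting criterion of covering space theory, and then to translate the resulting condition on $\pi_1(X\backslash\Sigma,x_0)$ into the stated linear-algebraic condition on $V_{\overline{\gamma}}$, the key tools being the perfectness of the Poincaré--Lefschetz pairing together with the transformation law~(\ref{form:intersection:orientation}). Throughout I take $f\in\mathrm{Aff}(X,\Sigma)$, which is precisely what is needed for $f_*$ to act on $H_1(X,\Sigma,\mathbb{Q})$, so that $f_*V_{\overline{\gamma}}$ is defined.

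First I would work on $X\backslash\Sigma$, where $\mathbb{P}:\widetilde{X}_{\overline{\gamma}}\to X$ restricts to the honest cover associated with the normal subgroup $H:=\ker(\phi_{\overline{\gamma}})\le\pi_1(X\backslash\Sigma,x_0)$. By the lifting criterion, $f\circ\mathbb{P}$ lifts to a continuous $\widetilde{f}:\widetilde{X}_{\overline{\gamma}}\to\widetilde{X}_{\overline{\gamma}}$ iff $f_*(H)\subseteq H$; applying this to both the homeomorphism $f$ and its inverse gives that a homeomorphism lift exists iff $f_*(H)=H$. Because $\mathbb{P}$ is a translation covering ($\widetilde{\omega}=\mathbb{P}^*\omega$) and $f$ is affine with derivative $A$, any such lift is automatically affine with the same derivative $A$, and it extends over the (unramified) singular points by passing to the metric completion. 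This disposes of the distinction between an affine and a merely topological lift, so that $\widetilde{f}\in\mathrm{Aff}(\widetilde{X})$ exists iff $f_*(H)=H$.

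Next I would abelianize. Since $\phi_{\overline{\gamma}}$ factors through the abelianization, $H\supseteq[\pi_1,\pi_1]$, and as the commutator subgroup is characteristic while $f_*$ is an automorphism of $\pi_1(X\backslash\Sigma,x_0)$, the condition $f_*(H)=H$ is equivalent to $f_*(K)=K$, where $K\subseteq H_1(X\backslash\Sigma,\mathbb{Q})$ is the image of $H$, namely $K=\{c : i([\gamma_j],c)=0 \text{ for all } j\}$ — the annihilator $V_{\overline{\gamma}}^{\perp}$ of $V_{\overline{\gamma}}$ under the intersection pairing. The normality of $H$ with abelian quotient is what guarantees that no basepoint or conjugation subtleties intervene in this reduction. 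Finally comes the duality computation: by formula~(\ref{form:intersection:orientation}), for any $c'$ with $i([\gamma_j],c')=0$ one has $i(f_*[\gamma_j],f_*c')=\mathrm{sign}(\det A)\,i([\gamma_j],c')=0$, whence $f_*K=(f_*V_{\overline{\gamma}})^{\perp}$, the nonzero scalar $\mathrm{sign}(\det A)$ being irrelevant to the annihilator. Since $i:H_1(X,\Sigma,\mathbb{Q})\times H_1(X\backslash\Sigma,\mathbb{Q})\to\mathbb{Q}$ is perfect, the map $W\mapsto W^{\perp}$ is an inclusion-reversing involution between the two spaces, so $f_*K=K$, i.e.\ $(f_*V_{\overline{\gamma}})^{\perp}=V_{\overline{\gamma}}^{\perp}$, holds iff $f_*V_{\overline{\gamma}}=V_{\overline{\gamma}}$. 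This closes the chain of equivalences.

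The step I expect to be the main obstacle is the careful passage between the three incarnations of the ``kernel'' — as a subgroup of $\pi_1$, as the subspace $K$ of absolute homology, and as the annihilator $V_{\overline{\gamma}}^{\perp}$ — making sure the subgroup equality genuinely reduces to the $\mathbb{Q}$-subspace equality and that perfectness of the relative/absolute intersection pairing is invoked correctly, including verifying that the sign $\mathrm{sign}(\det A)$ is harmless. Everything else is routine covering-space theory and linear algebra.
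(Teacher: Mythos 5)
Your proof is correct, but there is nothing in the paper to compare it against: Corollary~\ref{cor:action:z:cover} is quoted as a known result from \cite{HoW,DVH}, and the paper never proves it. Your argument is the standard one underlying that citation --- apply the covering-space lifting criterion to the regular cover of $X\backslash\Sigma$ with deck group $\mathbb{Z}^k$, then abelianize and use Poincar\'e--Lefschetz duality to convert the condition $f_*\ker(\phi_{\overline{\gamma}})=\ker(\phi_{\overline{\gamma}})$ into $f_*V_{\overline{\gamma}}=V_{\overline{\gamma}}$ --- and you handle the points where such an argument could actually fail: normality of $\ker(\phi_{\overline{\gamma}})$ removes the basepoint/conjugation ambiguity in applying the lifting criterion to a map that does not fix $x_0$; a topological lift of an affine map through a translation covering is automatically affine and extends to the metric completion by the Lipschitz property (note only that the completion points can perfectly well be ramified --- what matters is that they are not infinitely ramified, and the Lipschitz extension argument is insensitive to this); the factor $\mathrm{sign}(\det A)$ in formula~(\ref{form:intersection:orientation}) is irrelevant to annihilators; and the integral kernel is saturated, so the subgroup equality in $H_1(X\backslash\Sigma,\mathbb{Z})$ is equivalent to the subspace equality over $\mathbb{Q}$. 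Your standing assumption $f\in\mathrm{Aff}(X,\Sigma)$ is also the right reading of the statement rather than a restriction: without $f(\Sigma)=\Sigma$ the expression $f_*V_{\overline{\gamma}}$ is not defined, and since $\Sigma$ may contain marked points (Convention~\ref{conv:sing:conf}; the paper's main construction depends on regular singularities), preservation of $\Sigma$ is not automatic for elements of $\mathrm{Aff}(X)$.
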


\vspace{5mm}
\noindent
Recall that we denoted by $\widetilde{\phi}_{t}^{\theta}(\widetilde{x})$ the lift of the flow $\phi_{t}^{\theta}(x)$ to $\widetilde{X}_{\overline{\gamma}}$.
We will say that $\overline{\gamma}$ fulfills the `no drift' condition if 
\begin{equation}
\omega([\gamma_1]) = \dots =  \omega([\gamma_k]) = 0.
\end{equation}

As mentioned in the introduction, the recurrence problem for a $\mathbb{Z}$-cover is well studied:
\begin{corollary}
\cite{HoW}
Let $\widetilde{X}_{[\gamma_1]}$ be a $\mathbb{Z}$-cover of $(X,\omega)$, defined by an element ${ [\gamma_{1}] \in H_{1}(X, \Sigma, \mathbb{Z}) }$. 
Assume that the flow $\phi_{t}^{\theta}(x)$ is ergodic.
Then the flow $\widetilde{\phi}_{t}^{\theta}(\widetilde{x})$ on $\widetilde{X}_{[\gamma_1]}$ is recurrent iff $[\gamma_1]$ has no drift.
\end{corollary}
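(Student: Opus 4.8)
The plan is to realize the $\mathbb{Z}$-cover flow as a skew product (cylinder flow) over the ergodic base flow $\phi_t^\theta$ and to reduce recurrence to the conservativity of an integer-valued cocycle, to which one can apply Atkinson's theorem. First I would pass from the flow to a discrete model: choose a transversal $I$ to the direction $\theta$ on $X$ and let $T : I \to I$ be the first-return (Poincar\'e) map, which is an interval-exchange-type map, ergodic with respect to the induced Lebesgue measure $\nu$ because $\phi_t^\theta$ is ergodic. Since the cover $\widetilde{X}_{[\gamma_1]}$ is defined by the homomorphism $\varphi \mapsto i([\gamma_1],[\varphi])$ with deck group $\mathbb{Z}$, each first-return trajectory $x \rightsquigarrow T(x)$ picks up a well-defined integer, namely the intersection number of the closed-up return path with $[\gamma_1]$; this defines a cocycle $\psi : I \to \mathbb{Z}$ over $T$. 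The function $\psi$ takes finitely many values (it is piecewise constant on the continuity intervals of $T$), hence lies in $L^1(I,\nu)$, and the lifted flow $\widetilde{\phi}_t^\theta$ on $\widetilde{X}_{[\gamma_1]}$ is recurrent if and only if the $\mathbb{Z}$-extension of $(T,\nu)$ by $\psi$ is conservative.

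Next I would compute the mean $\int_I \psi\, d\nu$ and identify it with a period of $\omega$. Represent the Poincar\'e dual of $[\gamma_1]$ by a closed $1$-form $\eta$ on $X\setminus\Sigma$, so that $i([\gamma_1],[c]) = \int_c \eta$ for every cycle $c$. Summing $\psi$ over returns and invoking Birkhoff's theorem for the ergodic flow, the average displacement rate equals the spatial average $\int_X \eta(v_\theta)\,dA$, where $v_\theta$ is the unit vector field in direction $\theta$ and $dA$ is the flat area (the invariant measure). A direct computation using $\eta \wedge \operatorname{Re}\omega$ and $\eta\wedge\operatorname{Im}\omega$ together with the duality identity $\int_X \eta \wedge \alpha = \int_{\gamma_1}\alpha$ shows that this average is the component of $\omega([\gamma_1])$ transverse to $\theta$, that is $\operatorname{Im}(e^{-i\theta}\,\omega([\gamma_1]))$. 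In particular it vanishes for every $\theta$ precisely when the no-drift condition $\omega([\gamma_1])=0$ holds.

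With the mean in hand, both implications follow. If $[\gamma_1]$ has drift, i.e. $\omega([\gamma_1])\neq 0$, then its transverse component $\operatorname{Im}(e^{-i\theta}\,\omega([\gamma_1]))$ is nonzero (the single exceptional direction parallel to $\omega([\gamma_1])$ being excluded), so the cocycle has nonzero mean; Birkhoff's theorem then gives $\psi^{(n)} := \sum_{j<n}\psi\circ T^j \to \pm\infty$ linearly for $\nu$-a.e.\ $x$, hence almost every orbit leaves every compact set of $\widetilde{X}_{[\gamma_1]}$ and the flow is not recurrent. If $[\gamma_1]$ has no drift, then $\int_I\psi\,d\nu=0$ and Atkinson's theorem applies: a zero-mean integrable cocycle over an ergodic transformation is conservative, so $\liminf_n |\psi^{(n)}(x)| < \infty$ for $\nu$-a.e.\ $x$, meaning the integer fiber coordinate returns to a bounded set infinitely often; this conservativity of the $\mathbb{Z}$-extension lifts back to recurrence of $\widetilde{\phi}_t^\theta$.

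The main obstacle is the forward (no-drift $\Rightarrow$ recurrent) direction, which rests on Atkinson's theorem; the delicate point is that vanishing mean alone, with no a priori control on the fluctuations of $\psi^{(n)}$, must be shown to force infinitely many returns, and one must also justify that conservativity of the discrete $\mathbb{Z}$-extension of the section map genuinely implies recurrence of the continuous-time lifted flow, transferring between the Poincar\'e section and its suspension. By contrast, the computation of the cocycle mean and the ergodicity of the first-return map are comparatively routine once the duality identity $\int_X\eta\wedge\alpha=\int_{\gamma_1}\alpha$ is in place.
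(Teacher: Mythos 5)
The paper itself contains no proof of this corollary: it is quoted as a known result from \cite{HoW}, so there is no internal argument to compare against. Your proposal is correct and is essentially the argument of the cited source: pass to a Poincar\'e section so that the directional flow becomes an ergodic interval exchange, encode the $\mathbb{Z}$-cover by the integer-valued, piecewise-constant intersection cocycle $\psi$, identify its mean with $\operatorname{Im}\bigl(e^{-i\theta}\,\omega([\gamma_1])\bigr)$ via Poincar\'e duality, and conclude non-recurrence from Birkhoff when this mean is nonzero and conservativity (hence recurrence of the suspension) from Atkinson's theorem when it vanishes. One point you should make explicit rather than parenthetical: the exclusion of the direction parallel to $\omega([\gamma_1])$ is not a defect of your argument but a genuine imprecision in the statement as quoted --- if $\omega([\gamma_1])\neq 0$ and $\theta$ is parallel to it (and the flow in direction $\theta$ is ergodic), the cocycle mean is zero and the lifted flow \emph{is} recurrent, so the literal ``iff'' fails in that one direction; the correct dichotomy, which is what your computation actually proves, is that recurrence holds if and only if the component of $\omega([\gamma_1])$ transverse to $\theta$ vanishes. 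Since the paper only ever invokes this criterion through almost-every-direction statements, the discrepancy is harmless in context, but your write-up should state the corrected dichotomy rather than bury the exceptional direction in a parenthesis.
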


Unfortunately, this does not hold in $\mathbb{Z}^k$-covers with $k \ge 2$. 
The no-drift condition and the ergodicity of $\phi_{t}^{\theta}(x)$ are not enough for ensuring the recurrence of the flow $\widetilde{\phi}_{t}^{\theta}(\widetilde{x})$, 
as shown by a counterexample of Delecroix \cite{De}.

\vspace{3mm}
\noindent
An important observation in \cite{HoW} is that a cylinder $C$ in $X$ lifts to isometric cylinders in
the cover $\widetilde{X}_{\overline{\gamma}}$ if and only if the core curve of the cylinder $m(C) \in H_1(X,\Sigma, \mathbb{Z})$ (the homology class of the geodesics foliating $C$) satisfies
${ i(m(C), [\gamma_i]) = 0 }$ for $i = 1, \dots, k$.

\subsection{$\mathbb{Z}^2$ torus covers}
\label{sec:z2:torus:cover}

We can describe tilings of the plane, as in Section~\ref{tilling}, in terms of $\mathbb{Z}^{2}$-covers.\\
Recalling the notions, we use the configuration $F_{\mathbb{Z}^2}$.
Then the tilling corresponds to the cover $\widetilde{X}_{\overline{\gamma}}$, where
\begin{equation}
\label{form:z2:torus:cover}
\overline{\gamma} = \left(  \sum_{j=1}^{d} a_{1}^{j} h(e_j),\sum_{j=1}^{d} a_{2}^{j} h(e_j)  \right).
\end{equation}

\noindent
Indeed, following formula~(\ref{z:k:cover:constr}) we verify:
${  \upsilon_{e_i} i(\sum_{j=1}^{d} a_{1}^{j} h(e_j), [\gamma_{e_{i}}^{*}])= a_{1}^{i}  }$,
and similarly for the second component.

\vspace{2mm}
\noindent
We will refer to such covers, corresponding to tilings, as {\em $\mathbb{Z}^2$ torus covers}.

\subsection{$\mathbb{Z}^k$-covers of a simple parking garage}
\label{cover:boundary}

We will apply $\mathbb{Z}^k$-covers also to simple translation surfaces with boundary $X_\mathrm{b}$.\\
As we will see, formula~(\ref{z:k:cover:constr}) will stay the same. 
One removes open disks from the interior of the polygons in $\Omega$ (and similarly in $\Omega_{\mathbb{Z}^k}$)
and carries the edge identification while ignoring the boundary.

This is where our restrictive simplicity definition from Section~\ref{sec:parking}, where the boundary does not intersect the edges, 
comes in handy.

\vspace{5mm}
\noindent
Having $X_\mathrm{b} = X\backslash \{\Delta_i\}_i$, and recalling the basis construction from Section~\ref{basis:homology},
we look at $H_1(X_\mathrm{b} \backslash \Sigma,\mathbb{Z})$ and $H_1(X \backslash \Sigma,\mathbb{Z})$.
For $\Delta_i$ removed from the interior of $O_{i'}$, we let $\gamma^*_{\Delta_i}$ be the loop going around the boundary of $\Delta_i$.
Using the basis of $H_1(X \backslash \Sigma,\mathbb{Z})$, and adding the classes $[\gamma^*_{\Delta_i}]$ generates $H_1(X_\mathrm{b} \backslash \Sigma,\mathbb{Z})$.
Since $i(h(e_i)),[\gamma^*_{\Delta_i}]) = 0$ for all the edges, for $\overline{\gamma} \subset \text{span}_{\mathbb{Z}} \{ h(e_i)\}_{i=1}^{d-r_{\mathrm{o}}+1}$
the validity of formula~(\ref{z:k:cover:constr}) is not affected.

More intuitively, we already allowed adding marked points $\Sigma_{\mathrm{m}}$ in the $\mathbb{Z}^k$-cover constructions.
One can think of shrinking the boundaries of the discs $\Delta_i$ to punctures by a weak deformation retraction, constructing the cover, and blowing the punctures back. 

\vspace{5mm}
\noindent
The construction holds the same for a simple parking garage, where the boundary components are just polygonal.
In the wind-tree unfolding, we will apply a $\mathbb{Z}^2$ torus cover to a simple parking garage.
Using the notions of Section~\ref{sec:z2:torus:cover}, $F$ will be a fundamental domain with the polygons removed from its interior.
By the results of this section, formula~(\ref{form:z2:torus:cover}) will still give the correct homology classes defining the cover.

\section{Unfolding of a periodic planar billiard}
\label{Unfolding:planar:billiard}

We will now present the unfolding procedure to a planar periodic billiard $\mathrm{WT}(\Lambda, \Omega)$,
and describe it as a $\mathbb{Z}^2$-cover. For the original wind-tree model, the process was described in \cite{DHL},
and now we will generalize it.

\vspace{3mm}
\noindent
We denoted $\Omega = \overline{F \backslash \{O_i\}_{i=1}^{r_{\mathrm{o}}}}$, where $F$ is some fundamental domain for $\Lambda$, and $O_i$ are the obstacles.
The obstacles $O_i$ are arbitrary closed domains with a connected boundary consisting of linear segments.
This possibly includes degenerate polygons (of zero area) and $2$-gons. 

In Section~\ref{tilling} we have seen how $F$ has a structure of translation surface.
Considering this torus structure on $F$, and letting the obstacles serve as boundary, $\Omega$ becomes a parking garage which we will denote by $P$.
We choose the domain $F$ so that the obstacles $\{O_i\}_{i}$ lie in its interior.
One can verify this is indeed possible, using cutting and gluing-in-parallel on the torus with boundary $F \backslash \{O_i\}_{i}$.
So $P$ is a simple parking garage.

We write $e(P)=e(F)\cup e(O)$, for the edges of $F$ and the edges of $\{O_i\}_{i=1}^{r_{\mathrm{o}}}$.
We will denote edges in $e(F)$ as $e_j \in e(F)$ and edges in $e(O)$ as $b_j \in e(O)$.
As in Section~\ref{tilling}, we pick $e_1,e_2  \in e^+(F)$, so that $\tau_1,\tau_2$ are linearly independent.
Now, each element of $\Lambda$ can be represented as $\tau = l_1\tau_1 + l_2\tau_2$, $ l_1, l_2 \in \mathbb{Z}$.

\vspace{3mm}
\noindent
The unfolding holds for rational configurations $\Omega$.
As in Section~\ref{sec:billiard:unfold}, we introduce $\xi_{\text{diff}}(\Omega) = \left\{\xi_{i}-\xi_{j}\right\}_{i, j} = \left\{  \frac{m_i}{n_i} \pi | n_i \in \mathbb{N}, m_i \in \mathbb{Z} \right\}_{i}$,
and the unfolding constant $n(P) := \text{l.c.m.}(\left\{   n_i   \right\}_i)$. 

We view $\mathrm{WT}(\Lambda, \Omega) = P^{\infty}$ as an infinite parking garage , with the obstacles forming the boundary.
Specifically, we assume $\mathrm{WT}(\Lambda, \Omega)$ is connected. Otherwise, the resulting surface can be described as a $\mathbb{Z}$-cover, or as a finite-type translation surface,
for which we know that recurrence holds.

\subsection{Unfolding procedure}

The unfolding procedure concerns four objects.

\vspace{3mm}
\noindent
The first object is $P^{\infty}$.\\
As we discussed, $P^{\infty}$ is an infinite parking garage.
We denote by $\partial P^{\infty}$ its boundary: $\partial P^{\infty} = e(O)_{\Lambda} = \{ b_j(l_1,l_2) \,\,|\,\, b_j \in e(O), l_1\tau_1 + l_2\tau_2 \in \Lambda \}$.
To model the wind-tree billiard, we will apply unfolding to $P^{\infty}$.

\vspace{3mm}
\noindent
The second object is the infinite translation surface $X^{\infty}$.\\
It results from the unfolding to $P^{\infty}$. For each $\rho_{i} \in D_n$ we reflect $P^{\infty}$ in a different copy of the plane,
$\rho_{1}P^{\infty}, \dots, \rho_{2n}P^{\infty}$, with $\rho_{1}P^{\infty} = P^{\infty}$. 
We identify each $\rho_i b_j(l_1,l_2) \in \rho_i e(O)_{\Lambda}$
with $\rho_i b_j(l_1,l_2) \sim \rho_{i} \rho_{b_j} b_j(l_1,l_2)$. 
This gives $X^{\infty}$. 
The billiard flow on $\mathrm{WT}(\Lambda, \Omega)$ lifts to a linear flow on $X^{\infty}$, and a linear flow on $X^{\infty}$ in turn projects to the billiard flow on $\mathrm{WT}(\Lambda, \Omega)$.

\vspace{3mm}
\noindent
The third object is the finite simple parking garage $P$.\\
Thinking of tillings of the plane as in Section~\ref{tilling}, we can construct $P^{\infty}$ as follows.
We take $e(F)_{\Lambda} = \{ e_j(l_1,l_2) | e_j \in e(F), l_1\tau_1 + l_2\tau_2 \in \Lambda \}$, 
and denote $e(\Omega) =  e(O)_{\Lambda} \cup e(F)_{\Lambda}$.
Then, as in formula~(\ref{form:lattice:identifications}), we identify 
$ e_{j}\left(l_{1}, l_{2}\right) \sim \mathfrak{T}_F\left(e_{j}\right)\left(l_{1}+a_{1}^{j}, l_{2}+a_{2}^{j}\right) $.
This gives $P^{\infty}$.
Following Sections~\ref{sec:z2:torus:cover} and~\ref{cover:boundary}, $P^{\infty}$ is a $\mathbb{Z}^2$ torus cover of $P$, using the cycles from formula~(\ref{form:z2:torus:cover}):
$\overline{\gamma}_P = \left(   \sum_{j=1}^{d} a_{1}^{j} h(e_j),\sum_{j=1}^{d} a_{2}^{j} h(e_j)   \right)$,
${ \overline{\gamma}_P \subset H_1(P,\Sigma(P),\mathbb{Z}) }$.

\vspace{5mm}
\noindent
The fourth object is the finite translation surface $X$.\\
It results from applying the unfolding procedure to $P$. For each $\rho_{i} \in D_n$ we reflect $P$ in a different copy of the plane,
$\rho_{1}P, \dots, \rho_{2n}P$, with $\rho_{1}P = P$. We identify each $\rho_i b_j \in \rho_i e(O)$
with $\rho_i b_j \sim \rho_{i} \rho_{b_{j}} b_{j}$. This gives $X$. 

\vspace{5mm}
\noindent
We now describe $X^{\infty}$ constructively. We work with the configuration $D_n (\Omega)_{\Lambda}$, 
and identify:
\begin{equation}
\label{form:x:infinity:edge:identifications}
\begin{aligned}
& \rho_{i}e_{j}(l_{1}, l_{2}) \sim \rho_{i}\mathfrak{T}_F(e_{j})(l_{1}+a_{1}^{j}, l_{2}+a_{2}^{j}), \\
& \rho_{i}b_{j}(l_{1}, l_{2}) \sim \rho_{i} \rho_{b_{j}} b_{j}(l_{1}, l_{2}).
\end{aligned}
\end{equation}
In detail, we write $X^{\infty} (\Omega,  \Lambda,  D_n)$ as a cartesian product
${ X^{\infty} = (\Omega  \times  \Lambda  \times D_n)/\sim }$ with the following equivalencies:
\begin{equation}
\begin{aligned}
&  X^{\infty}(x_{\alpha}, (l_1^{\alpha}, l_2^{\alpha}), \rho_{\alpha})=X^{\infty}(x_{\beta}, (l_1^{\beta},l_2^{\beta}), \rho_{\beta}) \,\, \Longleftrightarrow \\
&  \left\{\begin{array}
{c}{x_{\alpha}=x_{\beta},\, (l_1^{\alpha}, l_2^{\alpha})=(l_1^{\beta},l_2^{\beta}),\, \rho_{\alpha}=\rho_{\beta}     }\\ 
{x_{\alpha} \in e_j \in e(F),\, x_{\beta} = x_{\alpha} + \tau_{e_j},\, (l_1^{\beta},l_2^{\beta}) =   (l_1^{\alpha} +a_{1}^{j}, l_2^{\alpha}+a_{2}^{j})  ,\, \rho_{\alpha}=\rho_{\beta}    }\\
{x_{\alpha} \in b_j \in e(O),\,  x_{\beta} = x_{\alpha},\,  (l_1^{\alpha}, l_2^{\alpha})=(l_1^{\beta},l_2^{\beta}),\,   \rho_{\alpha}\rho_{b_j}\rho_{\alpha}^{-1} =  \rho_{\beta}   }
\end{array}\right.
\end{aligned}
\end{equation}
Then we have the projections
\begin{equation}
\label{form:wt:projections}
\begin{aligned}
&  \mathbb{P}_{P^{\infty}}: X^{\infty} (\Omega,  \Lambda, D_n) \rightarrow X^{\infty} (\Omega,  \Lambda, \, \cdot \,)  \quad\!\! \cong P^{\infty} ,\\
&  \mathbb{P}_{X} \quad\!\! : X^{\infty} (\Omega,  \Lambda, D_n) \rightarrow X^{\infty} (\Omega, \, \cdot \,, D_n) \cong X  ,\\
&  \mathbb{P}_{P} \quad\!: X^{\infty} (\Omega,  \Lambda, D_n) \rightarrow X^{\infty} (\Omega, \,  \cdot \, , \, \cdot \,) \quad\!\! \cong P  .
\end{aligned}
\end{equation}
Writing $D_n = \left\{\theta_{1}, \ldots, \theta_{n}, \rho \theta_{1}, \ldots, \rho \theta_{n}\right\}$, we will show:
\begin{proposition}
\label{prop:wt:cover}
$X^{\infty}$ is a $\mathbb{Z}^2$-cover of $X$, $X^{\infty} =  \widetilde{X}_{\overline{\gamma}}$, where  $\overline{\gamma} \subset H_1(X,\Sigma,\mathbb{Z})$ is given by
$$\overline{\gamma}=\left(
\begin{aligned}   
& [\gamma_1] =  \sum_{\theta_i \mathrm{\,rotation}}^{i = 1, \dots, n} \, \sum_{j=1}^{d} a_{1}^{j} {\theta_i}_* h\left(e_{j}\right)    -    \sum_{\rho\theta_i \mathrm{\,reflection}}^{i = 1, \dots, n} \, \sum_{j=1}^{d} a_{1}^{j} {\rho\theta_i}_* h\left(e_{j}\right)      \\
& [\gamma_2] =  \sum_{\theta_i \mathrm{\,rotation}}^{i = 1, \dots, n} \, \sum_{j=1}^{d} a_{2}^{j} {\theta_i}_* h\left(e_{j}\right)    -    \sum_{\rho\theta_i \mathrm{\,reflection}}^{i = 1, \dots, n} \, \sum_{j=1}^{d} a_{2}^{j} {\rho\theta_i}_* h\left(e_{j}\right)    
 \end{aligned}
 \right)$$
\end{proposition}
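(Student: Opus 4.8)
The plan is to exhibit $X^{\infty} \to X$ as a $\mathbb{Z}^2$-cover via the projection $\mathbb{P}_{X}$ of formula~(\ref{form:wt:projections}), and then to pin down its defining classes by matching the explicit edge-identifications of $X^{\infty}$ against the general constructive recipe for $\mathbb{Z}^k$-covers in formula~(\ref{z:k:cover:constr}).

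First I would check that $\mathbb{P}_{X} : X^{\infty} = (\Omega \times \Lambda \times D_n)/\!\sim \; \to X$, which forgets the $\Lambda$-coordinate, is a translation covering map. Indeed $X^{\infty}$ is built from the very configuration $D_n\Omega$ defining $X$, with one copy for each $\overline{l} \in \Lambda \cong \mathbb{Z}^2$, and $\Lambda$ acts by $(l_1,l_2) \mapsto (l_1+c_1, l_2+c_2)$ as a free, properly discontinuous group of deck transformations with quotient $X$. Hence the deck group is $\mathbb{Z}^2$, and since $X^{\infty}$ is assumed connected we get $X^{\infty} = \widetilde{X}_{\overline{\gamma}}$ for some pair of linearly independent classes $\overline{\gamma}$; it remains to identify them.

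To do so I would apply formula~(\ref{z:k:cover:constr}) to the configuration $D_n\Omega$. Since $X$ is the unfolding of the simple parking garage $P$, Section~\ref{cover:boundary} guarantees that this formula stays valid in the presence of the obstacle edges, and the classes of $\overline{\gamma}$ will lie in the span of the $F$-edge classes $(\rho_i)_{*} h(e_j)$, so that (as noted after formula~(\ref{z:k:cover:constr})) no modification of the configuration is needed. Comparing with formula~(\ref{form:x:infinity:edge:identifications}) there are two families of edges to match. For an obstacle edge $\rho_i b_j$ the $\Lambda$-shift prescribed by~(\ref{form:x:infinity:edge:identifications}) is trivial; since each proposed $[\gamma_m]$ is a combination of the $F$-edge classes, whose intersection with the dual cycle of an obstacle edge vanishes (the dual-basis relations of Section~\ref{basis:homology}), the recipe~(\ref{z:k:cover:constr}) also returns shift $0$, as required.

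The hard part will be the $F$-edges $\rho_i e_j$, where the shift must equal $a_m^j$ for every copy $\rho_i$, including the reflected ones, and where three sign factors must conspire. In a reflected copy $\rho\theta_i$ the positively oriented $F$-edge is $\rho\theta_i \mathfrak{T}_F(e_j)$ rather than $\rho\theta_i e_j$; its geometric dual cycle is the reversal $-(\rho\theta_i)_{*}[\gamma^{*}_{e_j}]$ of the pushed-forward dual; and the shift attached to this positive edge in~(\ref{form:x:infinity:edge:identifications}) is correspondingly $-a_m^j$. Using Corollary~\ref{cor:billiard:sign} to evaluate $i\bigl((\rho_i)_{*} h(e_j),\,(\rho_i)_{*}[\gamma^{*}_{e_j}]\bigr) = \text{sign}(\mathrm{det}(\rho_i))$ together with the dual-basis relations, I would verify that for rotations the single surviving term of $[\gamma_m]$ contributes $+a_m^j$, while for reflections the compensating minus sign built into $\overline{\gamma}$, the reversal of the dual cycle, and the orientation factor $\upsilon$ combine to reproduce the prescribed shift. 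Once every edge matches, formula~(\ref{z:k:cover:constr}) identifies $X^{\infty}$ with $\widetilde{X}_{\overline{\gamma}}$ for the stated $\overline{\gamma}$. The delicate point throughout is exactly this reconciliation of orientation conventions in the reflected copies; everything else is bookkeeping on the product $\Omega \times \Lambda \times D_n$.
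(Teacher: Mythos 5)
Your proposal is correct and follows essentially the same route as the paper: both verify that the explicit identifications of formula~(\ref{form:x:infinity:edge:identifications}) coincide, edge by edge, with those prescribed by the $\mathbb{Z}^k$-cover recipe~(\ref{z:k:cover:constr}) applied to the stated $\overline{\gamma}$, with the crucial sign reconciliation in reflected copies handled exactly as in the paper via Corollary~\ref{cor:billiard:sign}, the reversal $\rho\theta_{i*}[\gamma^{*}_{e_j}] = -[\gamma^{*}_{\rho\theta_i e_j}]$, the orientation factor $\upsilon$, and the minus sign built into $\overline{\gamma}$. Your preliminary deck-group observation and your choice to anchor the reflected-copy check on the positive edge $\rho\theta_i\mathfrak{T}_F(e_j)$ (with shift $-a_m^j$) rather than on $\rho\theta_i e_j$ (with $\upsilon=-1$ and shift $+a_m^j$, as the paper does) are equivalent reformulations, not a different argument.
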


\vspace{6mm}
\noindent
Let us summarize:
\begin{enumerate}
\item $P \xrightarrow{D_n} X$ looking at $b_j \in e(\Omega)$.
\item $P \xrightarrow{\overline{\gamma}_P} P^{\infty}$ looking at $e_j \in e(F)$.
\item $P^{\infty} \xrightarrow{D_n} X^{\infty}$ looking at $b_j(l_1,l_2) \in e(\Omega_{\mathbb{Z}^2})$.
\item $X \xrightarrow{\overline{\gamma}} X^{\infty}$ looking at $\rho_i e_j \in D_n e(F)$.
\end{enumerate}


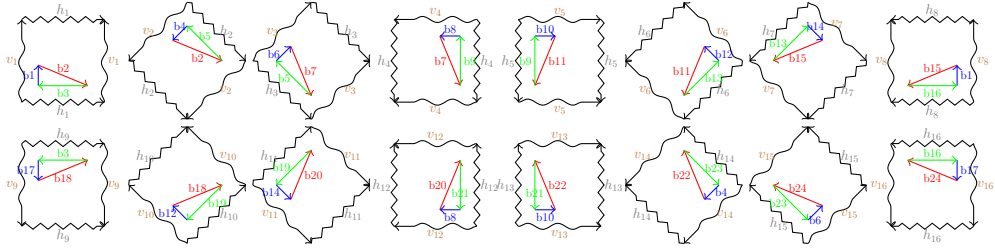
\begin{figure}[htbp]
\begin{tikzpicture}[scale=1.1,every node/.style = {opacity = 1, black, above left,scale = 0.37}] 

\newcommand\recVbaseX{0}; 
\newcommand\recVbaseY{0};
\coordinate (recV_base) at (\recVbaseX,\recVbaseY); 

\newcommand\recEl{1} 
\coordinate (recE) at (\recEl,\recEl);

\newcommand\trVax{0.2} \newcommand\trVay{0.2} 
\newcommand\trVbx{0.2} \newcommand\trVby{0.448528}
\newcommand\trVcx{0.8} \newcommand\trVcy{0.2}
\newcommand\trRfVax{0.2} \newcommand\trRfVay{0.8} 
\newcommand\trRfVbx{0.2} \newcommand\trRfVby{0.551472}
\newcommand\trRfVcx{0.8} \newcommand\trRfVcy{0.8}

\newcommand\recDl{1.5*\recEl} 

\foreach \row / \ref /  \rot  / \index / \trEaN/\trEaO/\trEbN/\trEbO/\trEcN/\trEcO   in {0/0/0/0  /b1/left/b2/above/b3/below/,   0/1/-45/1  /b4/above/b2/below/b5/above/,   0/0/-45/2  /b6/left/b7/right/b5/left/,   0/1/-90/3  /b8/above/b7/left/b9/right/,   0/0/-90/4  /b10/above/b11/right/b9/left/,   0/1/-135/5  /b12/right/b11/left/b13/right/,   0/0/-135/6  /b14/above/b15/below/b13/left/,   0/1/-180/7 /b1/right/b15/above/b16/below/,
					         1/0/0/0  /b17/left/b18/below/b3/above/,  1/1/-45/1   /b12/left/b18/above/b19/right/,  1/0/-45/2   /b14/left/b20/right/b19/left/,  1/1/-90/3   /b8/below/b20/left/b21/below/,  1/0/-90/4   /b10/below/b22/right/b21/below/,  1/1/-135/5   /b4/right/b22/left/b23/right/,  1/0/-135/6  /b6/below/b24/above/b23/left/,  1/1/-180/7  /b17/right/b24/below/b16/above/	} 
{
	\coordinate (recV) at  ($(\recVbaseX+\recDl*\index,\recVbaseY - \row*\recDl)$);
	\newcommand\rotO{\rot * (-1)^\row}
	
	\coordinate (recC) at ($(recV)+0.5*(recE)$); 
	\pgfmathparse{(\row == 0 || \row == 1) && \ref==0 && \rot>-45 ?int(1):int(0)}
	\ifnum\pgfmathresult>0\relax
		\newcommand\OrHb{below} \fi
	\pgfmathparse{(\row == 0 || \row == 1) && \ref==1 && \rot <= -180 ?int(1):int(0)}
	\ifnum\pgfmathresult>0\relax
		\newcommand\OrHb{above} \fi
	\pgfmathparse{\row == 0 &&( (\ref==0 && \rot <= -45) || (\ref==1 && \rot >-180) ) ?int(1):int(0)}
	\ifnum\pgfmathresult>0\relax
		\newcommand\OrHb{left}	\fi
	\pgfmathparse{\row == 1 && ( (\ref==0 && \rot <= -45) || (\ref==1 && \rot > -180) )  ?int(1):int(0)}
	\ifnum\pgfmathresult>0\relax
		\newcommand\OrHb{right} \fi
	\pgfmathparse{(\row == 0 || \row == 1) && \ref==0 && \rot>-45 ?int(1):int(0)}
	\ifnum\pgfmathresult>0\relax
		\newcommand\OrHt{above} \fi
	\pgfmathparse{(\row == 0 && \ref==1 && \rot <= -180) || (\row == 1 && ( (\ref == 0 && \rot <= -135) || (\ref == 1 && \rot <= -180)  )) ?int(1):int(0)}
	\ifnum\pgfmathresult>0\relax
		\newcommand\OrHt{below} \fi
	\pgfmathparse{\row == 0 &&( (\ref==0 && \rot <= -45) || (\ref==1 && \rot > -180  )) ?int(1):int(0)}
	\ifnum\pgfmathresult>0\relax
		\newcommand\OrHt{right} \fi
	\pgfmathparse{\row == 1 && ( (\ref == 0 && \rot <= -45 && \rot >-135) || (\ref == 1 && \rot > -180) ?int(1):int(0)}
	\ifnum\pgfmathresult>0\relax
		\newcommand\OrHt{left} \fi
	\pgfmathparse{(\row == 0 || \row == 1) && ((\ref==0 &&\rot>-90)||(\ref==1 && \rot > -90)) ?int(1):int(0)}
	\ifnum\pgfmathresult>0\relax
		\newcommand\OrHl{left} \fi
	\pgfmathparse{(\row == 0 && \ref==1 &&\rot<=-135) || (\row == 1 && ( (\ref == 0 && \rot <= -135) || (\ref == 1 && \rot <= -135)    )) ?int(1):int(0)}
	\ifnum\pgfmathresult>0\relax
		\newcommand\OrHl{right} \fi
	\pgfmathparse{\row == 0 && ((\ref==0 &&\rot<=-90) || (\ref == 1 && \rot <= -90 && \rot > -135)) ?int(1):int(0)}
	\ifnum\pgfmathresult>0\relax
		\newcommand\OrHl{above} \fi	
	\pgfmathparse{\row == 1 && ((\ref==0 && \rot<=-90 && \rot > -135) || (\ref == 1 && \rot <= -90 && \rot > -135)) ?int(1):int(0)}
	\ifnum\pgfmathresult>0\relax
		\newcommand\OrHl{below} \fi	
	\pgfmathparse{(\row == 0 || \row == 1) && ((\ref==0 &&\rot>-90)||(\ref==1 && \rot > -90)) ?int(1):int(0)}
	\ifnum\pgfmathresult>0\relax
		\newcommand\OrHr{right} \fi
	\pgfmathparse{(\row == 0 || \row == 1) && ((\ref==0 &&\rot<=-135)||(\ref==1 && \rot <= -135)) ?int(1):int(0)}
	\ifnum\pgfmathresult>0\relax
		\newcommand\OrHr{left} \fi
	\pgfmathparse{\row == 0 && ((\ref==0 &&\rot<=-90&&\rot>-135) || (\ref == 1 && \rot <= -90 && \rot > -135)) ?int(1):int(0)}
	\ifnum\pgfmathresult>0\relax
		\newcommand\OrHr{below} \fi
		\pgfmathparse{\row == 1 && ((\ref==0 &&\rot<=-90&&\rot>-135) || (\ref == 1 && \rot <= -90 && \rot > -135)) ?int(1):int(0)}
	\ifnum\pgfmathresult>0\relax
		\newcommand\OrHr{above} \fi
	%
	
	\def \HnIdx {\pgfmathparse{int(8*\row + (\index + 1))}\pgfmathresult}
	\newcommand\HoneRho{$h_{\HnIdx}$}
	\newcommand\HtwoRho{$v_{\HnIdx}$}

	\draw (recV) [rotate around = {\rotO: (recC)}] +(0,0) -- +(\recEl,0) [->,decorate,decoration={zigzag,amplitude=0.4mm,segment length=2mm,post length=1mm}] node[midway,\OrHb]{\Large \textcolor{gray} {\HoneRho}};
	\draw (recV) [rotate around = {\rotO: (recC)}] +(0,\recEl) -- +(\recEl,\recEl) [->,decorate,decoration={zigzag,amplitude=0.4mm,segment length=2mm,post length=1mm}] node[midway,\OrHt]{\Large \textcolor{gray} {\HoneRho}};
	\ifnum \ref=\row
		\draw (recV) [rotate around = {\rotO: (recC)}] +(0,0) -- +(0,\recEl) [->,decorate,decoration={snake,amplitude=0.4mm,segment length=4mm,post length=1mm}] node[midway,\OrHl]{\Large \textcolor{brown} {\HtwoRho}};
		\draw (recV) [rotate around = {\rotO: (recC)}] +(\recEl,0) -- +(\recEl,\recEl) [->,decorate,decoration={snake,amplitude=0.4mm,segment length=4mm,post length=1mm}] node[midway,\OrHr]{\Large \textcolor{brown} {\HtwoRho}};
	\else
		\draw (recV) [rotate around = {\rotO: (recC)}] +(0,\recEl) -- +(0,0) [->,decorate,decoration={snake,amplitude=0.4mm,segment length=4mm,post length=1mm}] node[midway,\OrHl]{\Large \textcolor{brown} {\HtwoRho}};
		\draw (recV) [rotate around = {\rotO: (recC)}] +(\recEl,\recEl) -- +(\recEl,0) [->,decorate,decoration={snake,amplitude=0.4mm,segment length=4mm,post length=1mm}] node[midway,\OrHr]{\Large \textcolor{brown} {\HtwoRho}};
	\fi
	
	\ifnum \ref=\row
		\draw (recV)[rotate around = {\rotO: (recC)}] +(\trVax,\trVay) -- +(\trVbx,\trVby) [->,blue] node[midway,\trEaO]{\large \textcolor{blue} {\trEaN}};
		\draw (recV)[rotate around = {\rotO: (recC)}] +(\trVbx,\trVby) -- +(\trVcx,\trVcy) [->,red]   node[midway,\trEbO]{\large \textcolor{red} {\trEbN}};
		\draw (recV)[rotate around = {\rotO: (recC)}] +(\trVcx,\trVcy) -- +(\trVax,\trVay) [->,green] node[midway,\trEcO]{\large \textcolor{green} {\trEcN}};
	\else
		\draw (recV)[rotate around = {\rotO: (recC)}] +(\trRfVax,\trRfVay) -- +(\trRfVbx,\trRfVby) [->,blue] node[midway,\trEaO]{\large \textcolor{blue} {\trEaN}};
		\draw (recV)[rotate around = {\rotO: (recC)}] +(\trRfVbx,\trRfVby) -- +(\trRfVcx,\trRfVcy) [->,red]  node[midway,\trEbO]{\large \textcolor{red} {\trEbN}};
		\draw (recV)[rotate around = {\rotO: (recC)}] +(\trRfVcx,\trRfVcy) -- +(\trRfVax,\trRfVay)  [->,green]  node[midway,\trEcO]{\large \textcolor{green} {\trEcN}};
	\fi
}

\end{tikzpicture}
\caption{An example of $X$ for $\Omega$ consisting of a triangle with angles $\frac{\pi}{2},\frac{3\pi}{8},\frac{\pi}{8}$.}
\end{figure}

\begin{figure}[htbp]
\begin{tikzpicture}[scale=0.8,every node/.style = {opacity = 1, black, above left,scale = 0.4}] 

\newcommand\recVbaseX{0}; 
\newcommand\recVbaseY{0};
\coordinate (recV_base) at (\recVbaseX,\recVbaseY); 

\newcommand\recEl{1} 
\coordinate (recE) at (\recEl,\recEl);

\newcommand\trVax{0.2} \newcommand\trVay{0.2} 
\newcommand\trVbx{0.2} \newcommand\trVby{0.448528}
\newcommand\trVcx{0.8} \newcommand\trVcy{0.2}
\newcommand\trRfVax{0.2} \newcommand\trRfVay{0.8} 
\newcommand\trRfVbx{0.2} \newcommand\trRfVby{0.551472}
\newcommand\trRfVcx{0.8} \newcommand\trRfVcy{0.8}

\newcommand\recDl{2*\recEl} 

\foreach \row in {0,1}
{
\foreach \ref  \rot  / \index in {0/0/0 , 1/-45/1, 0/-45/2, 1/-90/3, 0/-90/4, 1/-135/5, 0/-135/6, 1/-180/7} 
{
	\coordinate (recV) at  ($(\recVbaseX+\recDl*\index,\recVbaseY - \row*\recDl)$);
	\newcommand\rotO{\rot * (-1)^\row}
	
	\coordinate (recC) at ($(recV)+0.5*(recE)$); 
	\pgfmathparse{(\row == 0 || \row == 1) && \ref==0 && \rot>-45 ?int(1):int(0)}
	\ifnum\pgfmathresult>0\relax
		\newcommand\OrHb{below} \fi
	\pgfmathparse{(\row == 0 || \row == 1) && \ref==1 && \rot <= -180 ?int(1):int(0)}
	\ifnum\pgfmathresult>0\relax
		\newcommand\OrHb{above} \fi
	\pgfmathparse{\row == 0 &&( (\ref==0 && \rot <= -45) || (\ref==1 && \rot >-180) ) ?int(1):int(0)}
	\ifnum\pgfmathresult>0\relax
		\newcommand\OrHb{left}	\fi
	\pgfmathparse{\row == 1 && ( (\ref==0 && \rot <= -45) || (\ref==1 && \rot > -180) )  ?int(1):int(0)}
	\ifnum\pgfmathresult>0\relax
		\newcommand\OrHb{right} \fi
	\pgfmathparse{(\row == 0 || \row == 1) && \ref==0 && \rot>-45 ?int(1):int(0)}
	\ifnum\pgfmathresult>0\relax
		\newcommand\OrHt{above} \fi
	\pgfmathparse{(\row == 0 && \ref==1 && \rot <= -180) || (\row == 1 && ( (\ref == 0 && \rot <= -135) || (\ref == 1 && \rot <= -180)  )) ?int(1):int(0)}
	\ifnum\pgfmathresult>0\relax
		\newcommand\OrHt{below} \fi
	\pgfmathparse{\row == 0 &&( (\ref==0 && \rot <= -45) || (\ref==1 && \rot > -180  )) ?int(1):int(0)}
	\ifnum\pgfmathresult>0\relax
		\newcommand\OrHt{right} \fi
	\pgfmathparse{\row == 1 && ( (\ref == 0 && \rot <= -45 && \rot >-135) || (\ref == 1 && \rot > -180) ?int(1):int(0)}
	\ifnum\pgfmathresult>0\relax
		\newcommand\OrHt{left} \fi
	\pgfmathparse{(\row == 0 || \row == 1) && ((\ref==0 &&\rot>-90)||(\ref==1 && \rot > -90)) ?int(1):int(0)}
	\ifnum\pgfmathresult>0\relax
		\newcommand\OrHl{left} \fi
	\pgfmathparse{(\row == 0 && \ref==1 &&\rot<=-135) || (\row == 1 && ( (\ref == 0 && \rot <= -135) || (\ref == 1 && \rot <= -135)    )) ?int(1):int(0)}
	\ifnum\pgfmathresult>0\relax
		\newcommand\OrHl{right} \fi
	\pgfmathparse{\row == 0 && ((\ref==0 &&\rot<=-90) || (\ref == 1 && \rot <= -90 && \rot > -135)) ?int(1):int(0)}
	\ifnum\pgfmathresult>0\relax
		\newcommand\OrHl{above} \fi	
	\pgfmathparse{\row == 1 && ((\ref==0 && \rot<=-90 && \rot > -135) || (\ref == 1 && \rot <= -90 && \rot > -135)) ?int(1):int(0)}
	\ifnum\pgfmathresult>0\relax
		\newcommand\OrHl{below} \fi	
	\pgfmathparse{(\row == 0 || \row == 1) && ((\ref==0 &&\rot>-90)||(\ref==1 && \rot > -90)) ?int(1):int(0)}
	\ifnum\pgfmathresult>0\relax
		\newcommand\OrHr{right} \fi
	\pgfmathparse{(\row == 0 || \row == 1) && ((\ref==0 &&\rot<=-135)||(\ref==1 && \rot <= -135)) ?int(1):int(0)}
	\ifnum\pgfmathresult>0\relax
		\newcommand\OrHr{left} \fi
	\pgfmathparse{\row == 0 && ((\ref==0 &&\rot<=-90&&\rot>-135) || (\ref == 1 && \rot <= -90 && \rot > -135)) ?int(1):int(0)}
	\ifnum\pgfmathresult>0\relax
		\newcommand\OrHr{below} \fi
		\pgfmathparse{\row == 1 && ((\ref==0 &&\rot<=-90&&\rot>-135) || (\ref == 1 && \rot <= -90 && \rot > -135)) ?int(1):int(0)}
	\ifnum\pgfmathresult>0\relax
		\newcommand\OrHr{above} \fi
	%
	
	\ifnum \rot = 0
		\ifnum \row = 0
			\newcommand\HoneRho{$h_1$}
			\newcommand\HtwoRho{$h_2$}
		\else
			\newcommand\HoneRho{$\rho h_1$}
			\newcommand\HtwoRho{$\rho h_2$}
		\fi
	\else 
		\def \RhoDnIdx {\pgfmathparse{int(4*\row + (\index + 1)/2)}\pgfmathresult}
		\ifnum \ref = \row
			\newcommand\HoneRho{$h_1^{\theta_{\RhoDnIdx}}$}
			\newcommand\HtwoRho{$h_2^{\theta_{\RhoDnIdx}}$}
		\else
			\newcommand\HoneRho{$h_1^{\rho\theta_{\RhoDnIdx}}$}
			\newcommand\HtwoRho{$h_2^{\rho\theta_{\RhoDnIdx}}$}
		\fi
		\relax
	\fi

	\draw (recV) [rotate around = {\rotO: (recC)}] +(0,0) -- +(\recEl,0) [->,thick,blue,decorate,decoration={zigzag,amplitude=0.4mm,segment length=2mm,post length=1mm}] node[midway,\OrHb]{\LARGE \textcolor{blue} {\HoneRho}};
	\draw (recV) [rotate around = {\rotO: (recC)}] +(0,\recEl) -- +(\recEl,\recEl) [->,thick,blue,decorate,decoration={zigzag,amplitude=0.4mm,segment length=2mm,post length=1mm}] node[midway,\OrHt]{\LARGE \textcolor{blue} {\HoneRho}};
	\ifnum \ref=\row
		\draw (recV) [rotate around = {\rotO: (recC)}] +(0,0) -- +(0,\recEl) [->,thick,red,decorate,decoration={snake,amplitude=0.4mm,segment length=4mm,post length=1mm}] node[midway,\OrHl]{\LARGE \textcolor{red} {\HtwoRho}};
		\draw (recV) [rotate around = {\rotO: (recC)}] +(\recEl,0) -- +(\recEl,\recEl) [->,thick,red,decorate,decoration={snake,amplitude=0.4mm,segment length=4mm,post length=1mm}] node[midway,\OrHr]{\LARGE \textcolor{red} {\HtwoRho}};
	\else
		\draw (recV) [rotate around = {\rotO: (recC)}] +(0,\recEl) -- +(0,0) [->,thick,red,decorate,decoration={snake,amplitude=0.4mm,segment length=4mm,post length=1mm}] node[midway,\OrHl]{\LARGE \textcolor{red} {\HtwoRho}};
		\draw (recV) [rotate around = {\rotO: (recC)}] +(\recEl,\recEl) -- +(\recEl,0) [->,thick,red,decorate,decoration={snake,amplitude=0.4mm,segment length=4mm,post length=1mm}] node[midway,\OrHr]{\LARGE \textcolor{red} {\HtwoRho}};
	\fi
	
	\ifnum \ref=\row
		\draw (recV)[rotate around = {\rotO: (recC)}] +(\trVax,\trVay) -- +(\trVbx,\trVby) -- +(\trVcx,\trVcy) -- +(\trVax,\trVay);
	\else
		\draw (recV)[rotate around = {\rotO: (recC)}] +(\trRfVax,\trRfVay) -- +(\trRfVbx,\trRfVby) -- +(\trRfVcx,\trRfVcy) -- +(\trRfVax,\trRfVay);
	\fi
}
}

\end{tikzpicture}
\caption{Example for the cover cycles.}
\end{figure}
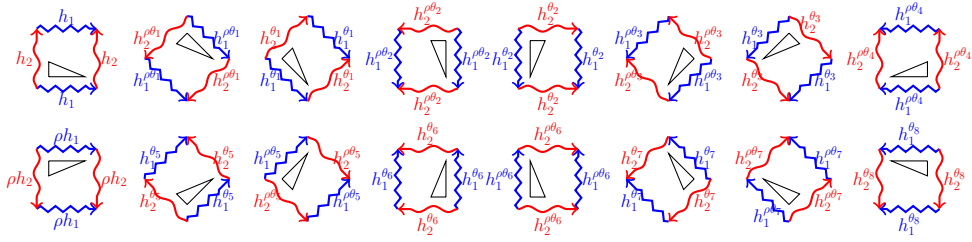

\subsection{Proof of the construction}

As in Subsection~\ref{sec:billiard:action}, $D_n$ acts on $X$ and on $X^{\infty}$ by affine automorphisms.
By formula~(\ref{form:wt:projections}),  the action of $D_n$ on $X^{\infty}$ is the lift of the action of $D_n$ on $X$: 
$  {\rho_i}^{(X)} \circ \mathbb{P}_{X}  =  \mathbb{P}_{X} \circ  {\rho_i}^{(X^{\infty})} $, for each $\rho_i \in D_n$.
In view of Corollary~\ref{cor:action:z:cover}, an initial guess for the cycles $\overline{\gamma}$ giving $X^{\infty}$ as a $\mathbb{Z}^2$-cover is
${D_n}_* \overline{\gamma}_P = \left\{  {D_n}_* {\gamma_l}_P = \sum_{i=1}^{n} \sum_{j=1}^{d} a_{l}^{j} \rho_{i *} h\left(e_{j}\right)                    \right\}$,
$l \in \{1,2\}$.
This will almost be the case. Indeed, $\rho_i P^{\infty}$ is a $\mathbb{Z}^2$-cover of $\rho_i P$, and the cover correlates to the subspace generated by $\rho_i \overline{\gamma}_P$.
But the delicate point is Corollary~\ref{cor:billiard:sign}, which shows that the intersection sign is affected by the $D_n$-action.
\begin{proof}[Proof of Proposition \ref{prop:wt:cover}]
We follow formula~(\ref{z:k:cover:constr}). 
Concerning the discussion there, on the assumptions on $\Omega$, we note: 
\begin{enumerate}
\item $\overline{\gamma} \subset { \text{span}_{\mathbb{Z}} \{ h(e_i | e_i \in e(F) )\}_{i=1}^{d} }$.
\item One can glue in parallel the parking garages $D_n P$ of $\Omega$ along the edges $D_n e(O)$, without changing $D_n e(F)$.
\end{enumerate}
Thus the formula holds, without any needed modifications to $\Omega$.

Now, by formula~(\ref{form:x:infinity:edge:identifications}), we should have $\rho_{i}e_{j}(l_{1}, l_{2}) \sim \rho_{i}\mathfrak{T}_F(e_{j}) \left( l_{1}+a_{1}^{j}, l_{2}+a_{2}^{j} \right)$.
So we should verify that $\rho_{i}\mathfrak{T}_F(e_{j})(l_{1}+a_{1}^{j}) = \mathfrak{T}_X(\rho_i e_j)(l_1 +  \upsilon_{\rho_i e_j} i([\gamma_1],[\gamma^*_{\rho_i e_j}]))$, and similarly for $[\gamma_2]$.
We have $\rho_{i}\mathfrak{T}_F(e_{j}) = \mathfrak{T}_X(\rho_i e_j)$ by the definition of $X$.
Note that if $\rho\theta_i$ is a reflection, then $\rho\theta_{i*}[\gamma^*_{e_j}] = - [\gamma^*_{\rho\theta_i e_j}]$ by Corollary~\ref{cor:billiard:sign}.
We see that:
$\upsilon_{\rho_i e_j} i([\gamma_1], [\gamma^*_{\rho_i e_j}]) = \\
\upsilon_{\rho_i e_j} i(
\sum_{\theta_{i'} \text { rotation }}^{i'=1, \ldots, n} \sum_{j'=1}^{d} a_{1}^{j'} \theta_{i' *} h(e_{j'})-\sum_{\rho \theta_{i'} \text { reflection }}^{i'=1, \ldots, n} \sum_{j'=1}^{d} a_{1}^{j'} \rho \theta_{i' *} h(e_{j'}) \, ,  \,
[\gamma^*_{\rho_i e_j}] )  =  \\
\{\begin{array}{ll}{(+1) \cdot i(a_{1}^{j}  \theta_{i*} h(e_{j}), \theta_{i*} [\gamma^*_{e_{j}}]) = a_{1}^{j} i(\theta_{i*} h(e_{j}),\theta_{i*}[\gamma^*_{e_{j}}] )  = a_{1}^{j},} & {\rho_i = \;\;\theta_i ,} \\ 
{(-1) \cdot i(- a_{1}^{j} \rho\theta_{i*} h (e_{j}), -\rho \theta_{i*}[\gamma^*_{e_{j}}]) = -a_{1}^{j} i(\rho\theta_{i*} h (e_{j}),\rho \theta_{i*}[\gamma^*_{e_{j}}] ) = a_{1}^{j},} & {\rho_i = \rho\theta_{i-n} .}\end{array}  $
\end{proof}

\section{Recurrence criterion}
\label{sec:rec:criterion}

We next present the recurrence criterion used in \cite{AH}.\\
Let $X \in \mathcal{H}_{1}(\kappa)$ be a translation surface, and let $\widetilde{X}_{\overline{\gamma}}$ be a $\mathbb{Z}^k$-cover of $X$ defined by $\overline{\gamma} \subset  H_1(X,\Sigma,\mathbb{Z})$.
We denoted by $\phi_{t}(x)$ the vertical flow on $X$ starting at $x$, and by $\widetilde{\phi}_{t}(\widetilde{x})$ its lift to $\widetilde{X}_{\overline{\gamma}}$.
Let $\mathcal{M}$ be the $\mathrm{SL}(2,\mathbb{R})$-orbit closure of $X$ in $\mathcal{H}_{1}(\kappa)$. 
Assume there is a symplectic splitting of the (extended real) Hodge bundle $H_{g}^{1} = V(\mathcal{M}) \oplus V^{\perp}(\mathcal{M})$ invariant for the KZ-cocycle on $\mathcal{M}$
(as discussed in Corollary~\ref{cor:quad:hodge:split}).
Assume that $\overline{\gamma} \in V_X \cap H_1(X,\Sigma,\mathbb{Z})$.
A cylinder $C$ in $X$ will be called {\em good} if the homology class of its core curve $m(C)$ belongs to $V^{\perp}_X$.

\begin{theorem}
\cite{AH}
\label{AH:rec:criterion}
Assume that the `no drift' condition holds: $\omega([\gamma_i] \in \overline{\gamma}) = 0$.
Let $(Y,\omega_Y) \in \mathcal{M}$ be a surface with a vertical good cylinder $C$, $m(C) \in V^{\perp}_Y$.
Assume that the orbit $g_t(X,\omega)$ accumulates at $(Y,\omega_Y)$.
If the vertical flow $\phi_{t}(x)$ on $X$ is ergodic, then the vertical flow $\widetilde{\phi}_{t}(\widetilde{x})$ on $\widetilde{X}_{\overline{\gamma}}$ is recurrent.
\end{theorem}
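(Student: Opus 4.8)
The plan is to translate recurrence of $\widetilde\phi_t$ into conservativity of a $\mathbb{Z}^k$-valued cocycle over the ergodic base flow, and then to use the good cylinder as a trapping mechanism compatible with the symplectic splitting. Concretely, I would choose a transversal $I$ to the vertical flow on $X$; since $\phi_t$ is ergodic, the first-return map $T\colon I\to I$ is an ergodic probability-measure-preserving interval-exchange-type map with invariant measure $\nu$, and $\widetilde{X}_{\overline\gamma}$ corresponds to the skew product $T_\psi(x,v)=(Tx,v+\psi(x))$ on $I\times\mathbb{Z}^k$ with deck-displacement cocycle $\psi(x)=\big(i([\gamma_1],\sigma_x),\dots,i([\gamma_k],\sigma_x)\big)$, where $\sigma_x$ is the return segment. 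By the theory of skew products over ergodic systems (Schmidt, Maharam), $\widetilde\phi_t$ is recurrent iff $T_\psi$ is conservative; as the base is ergodic, the Hopf dichotomy reduces this to exhibiting a positive-$\nu$-measure set of points whose Birkhoff sums $\psi_n$ return to a bounded subset of $\mathbb{Z}^k$ infinitely often. The no-drift hypothesis enters here: $\omega([\gamma_i])=0$ forces $\int_I\psi\,d\nu=0$ (the mean displacement rate is the horizontal holonomy $\mathrm{Re}\,\omega([\gamma_i])$, which vanishes), so there is no linear escape to rule out.

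Next I would exploit the splitting. Since $[\gamma_j]\in V_X$ and $V\perp V^\perp$ symplectically, the displacement sees only the $V$-component of the traversed homology, $i([\gamma_j],c)=i([\gamma_j],\Pi_V c)$, so windings whose homology lands in $V^\perp$ are invisible to $\psi$. Transporting $\overline\gamma$ along the $g_t$-orbit by the Gauss--Manin/KZ-cocycle keeps it in $V$ by invariance of the splitting, while the good cylinder satisfies $m(C)\in V^\perp_Y$; hence $i(m(C),[\gamma_j])=0$ for all $j$, and by the Hooper--Weiss observation quoted earlier $C$ lifts to \emph{closed} (compact) cylinders in the cover. Because $g_t$ only reparametrizes the vertical flow and preserves the cover structure, for the times $t_n$ with $g_{t_n}(X,\omega)\to(Y,\omega_Y)$ the surface $g_{t_n}X$ carries a long vertical cylinder $C_n$ whose integer core class persists in $V^\perp$ (it survives from $C$, and the subbundle is locally constant on integer classes), so $C_n$ likewise lifts to closed cylinders: orbits dwelling in $C_n$ accumulate near-zero $\mathbb{Z}^k$-displacement over the corresponding long time window.

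To assemble recurrence, I would use that $C$ occupies a definite proportion of the area of $Y$, so along $t_n$ a $\nu$-definite proportion of cross-section points enter $C_n$ and spend a long time trapped with frozen $\psi$; and that the $g_t$-orbit returns near $Y$ with positive frequency (Birkhoff genericity of the direction via Theorem~\ref{Chaika-Eskin} on $\mathcal{M}$), so these trapping windows recur with positive density. Together with the zero-mean property this should yield a positive-measure set of points whose partial sums $\psi_n$ revisit a bounded region infinitely often, whence the Hopf dichotomy gives conservativity of $T_\psi$, i.e. recurrence of $\widetilde\phi_t$.

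I expect the main obstacle to be the quantitative content of the previous paragraph: converting ``$g_{t_n}X$ is close to $Y$, which has a good cylinder that lifts compactly'' into an \emph{honest} positive-measure set of cover orbits returning to a bounded region \emph{infinitely often}, rather than merely having bounded displacement on a single finite window. This is exactly the step where $k\ge 2$ is delicate---zero mean alone fails, by Delecroix's example---and where the symplectic orthogonality $m(C)\in V^\perp\perp\overline\gamma$ must do the real work, guaranteeing that the displacement contributed near $Y$ is genuinely frozen in every coordinate, so that recurrence of the base flow to the good-cylinder locus transfers to recurrence of the cocycle.
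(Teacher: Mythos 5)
Your overall framework---the skew-product/cocycle reduction over the ergodic base, the Hopf dichotomy reducing recurrence to a positive-measure set of points whose sums $\psi_n$ visit a bounded set infinitely often, the symplectic invisibility of $V^{\perp}$-windings to the deck cocycle, and the persistence of the good cylinder along $g_{t_n}$---matches the outline the paper attributes to \cite{AH} (the paper itself does not prove this theorem; it records only that the proof is an adaptation of Masur's criterion together with the cylinder-lifting observation). The gap is in your final assembly step, and it sits exactly where you suspect. First, you appeal to positive-frequency returns of $g_t(X,\omega)$ near $(Y,\omega_Y)$ via Birkhoff genericity (Theorem~\ref{Chaika-Eskin}); but the theorem being proved assumes only that $g_{t_n}(X,\omega)\to(Y,\omega_Y)$ along \emph{some} sequence $t_n\to\infty$. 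Birkhoff genericity is what the derived criterion, Theorem~\ref{rec:criterion}, adds afterwards; it is not available here, and the proof must work with a single accumulating sequence. Second, and more fundamentally, ``displacement frozen during recurring trapping windows, plus zero mean'' does not yield returns of $\psi$ to a bounded set: freezing controls the \emph{change} of $\psi$ across a window but says nothing about its value at the window's entry, which can diverge along the orbit; for $k\ge 2$ the zero-mean property cannot exclude that divergence---this is precisely the mechanism of Delecroix's example \cite{De}. So the proposal, as assembled, fails at its crux.

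The missing idea is to anchor the trapping windows at time zero and take a limsup over the accumulating sequence, instead of following one orbit through windows it meets at late times. Pull the persistent cylinder $C_n\subset g_{t_n}X$ back to $X$: one obtains a cylinder $D_n$ on $X$, nearly vertical, of width $O(e^{-t_n})$, core length of order $e^{t_n}$, area bounded below by some $a>0$, and core class $m_n$ with $i(m_n,[\gamma_j])=0$ for all $j$ (by KZ-invariance of the splitting and integrality of core classes, as you argue). Let $E_n\subset D_n$ be the set of points whose forward vertical orbit completes at least one full wrap around $D_n$ before exiting; since the drift per wrap is a vanishing fraction of the width, $\mu(E_n)\ge a/2$ for $n$ large. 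For $x\in E_n$, each wrap closes up (by a tiny arc in the cross-section through $x$) into a loop of class $m_n$, which lifts closed; hence the lifted orbit returns to within distance $O(e^{-t_n})$ of $\widetilde{x}$ at a time at least the core length, i.e.\ $\ge c\,e^{t_n}$. Since $\mu$ is finite, $\mu\left(\limsup_n E_n\right)\ge\limsup_n\mu(E_n)\ge a/2>0$, and every point of $\limsup_n E_n$ is a recurrent point of the lifted flow: it returns arbitrarily close to itself at arbitrarily late times. Your own Hopf/ergodicity reduction then upgrades this positive-measure set of recurrent points to conservativity. Note what this buys: no density of return times and no zero-mean input are needed; the whole weight is carried by the uniform lower bound $\mu(E_n)\ge a/2$ and the limsup over the single sequence $t_n$, which is exactly the sense in which the argument is ``an adaptation of Masur's criterion.''
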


\vspace{3mm}
\noindent
The theorem is an adaptation of Masur's criterion~\ref{thm:masur:unique:ergod}, together with the observation in Section~\ref{sec:z:d:cover} that cylinders lift to isometric cylinders in the cover
if ${ i(m(C), [\gamma_i]) = 0}$, $ i = 1, \dots, k$. 

Following that, another recurrence criterion was shown in \cite{FH,Pa}, which is a direct application of the Avila-Hubert criterion~\ref{AH:rec:criterion} together with the Chaika-Eskin result~\ref{Chaika-Eskin}:

\begin{theorem}
\label{rec:criterion}
Assume that the `no drift' condition holds, $\omega([\gamma_i] \in \overline{\gamma}) = 0$.
Let $(Y,\omega_Y) \in \mathcal{M}(X)$ be a surface with a good cylinder $C$, not necessarily vertical, such that $m(C) \in V^{\perp}_Y$.
Then for every Birkhoff generic direction $\theta \in [0,2\pi)$ for $X$, the flow $\widetilde{\phi}_{t}^{\theta}$ is recurrent.
Consequently the flow on $\widetilde{X}_{\overline{\gamma}}$ is recurrent in almost every direction.
\end{theorem}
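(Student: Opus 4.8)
The plan is to reduce the statement to the vertical-flow criterion of Theorem~\ref{AH:rec:criterion} by rotating, and then to invoke Theorem~\ref{Chaika-Eskin} for the final ``almost every direction'' conclusion. Throughout I would identify the direction-$\theta$ flow $\widetilde{\phi}_{t}^{\theta}$ on $\widetilde{X}_{\overline{\gamma}}$ with the lift of the \emph{vertical} flow on the rotated surface $X_{\theta}$, where $X_{\theta}$ is $X$ turned so that the direction-$\theta$ flow becomes vertical, with $\omega_{X_{\theta}} = e^{i\theta}\omega$ (matching the Birkhoff-generic normalization in Theorem~\ref{Chaika-Eskin}). Correspondingly the $\mathbb{Z}^{k}$-cover of $X_{\theta}$ defined by $\overline{\gamma}$ is just the rotation of $\widetilde{X}_{\overline{\gamma}}$, since the cover depends only on the topological data $\overline{\gamma}$; so recurrence of the vertical lift on $\widetilde{(X_{\theta})}_{\overline{\gamma}}$ is exactly recurrence of $\widetilde{\phi}_{t}^{\theta}$.

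First I would check that all standing hypotheses of Theorem~\ref{AH:rec:criterion} survive the rotation, so that the criterion applies to $X_{\theta}$ in place of $X$. The orbit closure is unchanged, $\mathcal{M}(X_{\theta}) = \mathcal{M}(X) = \mathcal{M}$, because rotation lies in $\mathrm{SL}(2,\mathbb{R})$. The ``no drift'' condition is direction-independent: $\omega([\gamma_i]) = 0$ gives $(e^{i\theta}\omega)([\gamma_i]) = e^{i\theta}\,\omega([\gamma_i]) = 0$. And $\overline{\gamma} \subset V_{X_{\theta}}$ still holds, since $V$ is a KZ-invariant (hence $\mathrm{SL}(2,\mathbb{R})$-equivariant) sub-bundle — concretely the $\iota_{*}$-eigenspace of Corollary~\ref{cor:quad:hodge:split} — carried to itself under rotation, while $\overline{\gamma}$ is unchanged homological data.

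Next I would convert the given \emph{non-vertical} good cylinder into a vertical one. Let $\psi$ be the direction of the core curve of $C$ on $Y$, and set $Y' := r_{\psi}Y \in \mathcal{M}$, so that $C$ becomes a vertical cylinder $C'$ with the same core class $m(C') = m(C)$. The point I would spell out is that $V^{\perp}$ is preserved under this rotation: it is the fixed $(-1)$-eigenspace of the topological involution $\iota_{*}$ (Corollary~\ref{cor:quad:hodge:split}), independent of the flat structure, so $m(C) \in V^{\perp}_{Y}$ forces $m(C') \in V^{\perp}_{Y'}$; thus $Y'$ carries a vertical good cylinder. With $\theta$ a fixed Birkhoff-generic direction, the remaining two inputs to Theorem~\ref{AH:rec:criterion} are immediate from equidistribution: Birkhoff genericity makes $g_{t}(X_{\theta})$ equidistribute in $\mathcal{M}$, hence be non-divergent, so Masur's criterion (Theorem~\ref{thm:masur:unique:ergod}) gives that the vertical flow on $X_{\theta}$ is ergodic; and the same equidistribution, applied to continuous functions supported near $Y'$, shows that $g_{t}(X_{\theta})$ accumulates at $Y' \in \mathcal{M} = \mathrm{supp}(\mu_{\mathcal{M}})$. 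Theorem~\ref{AH:rec:criterion} then yields recurrence of the vertical lift on $\widetilde{(X_{\theta})}_{\overline{\gamma}}$, i.e.\ of $\widetilde{\phi}_{t}^{\theta}$.

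Finally, Theorem~\ref{Chaika-Eskin} guarantees that Lebesgue-almost every $\theta$ is Birkhoff generic for $X$, so the recurrence just established holds for almost every direction, giving the last sentence. I expect the main obstacle to be the careful bookkeeping of the two rotations — verifying that rotating the cylinder to the vertical genuinely preserves the homological condition $m(C) \in V^{\perp}$ and that the no-drift condition is genuinely direction-independent — since this ``verticalization'' is precisely the content Theorem~\ref{rec:criterion} adds on top of the vertical-only statement of Theorem~\ref{AH:rec:criterion}; the equidistribution inputs (ergodicity via Masur, accumulation at $Y'$) are then routine consequences of Birkhoff genericity.
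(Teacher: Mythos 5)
Your proposal is correct and follows essentially the same route as the paper: the paper gives no detailed argument for this theorem, presenting it (with citations to \cite{FH,Pa}) as a direct combination of the Avila--Hubert criterion (Theorem~\ref{AH:rec:criterion}) with the Chaika--Eskin genericity result (Theorem~\ref{Chaika-Eskin}), which is precisely what you carry out. Your explicit bookkeeping --- rotating $X$ so that $\theta$ becomes vertical, rotating $Y$ so that the cylinder becomes vertical, and noting that the no-drift condition and the splitting (being the topological $\iota_*$-eigenspace splitting of Corollary~\ref{cor:quad:hodge:split}) are unaffected, with ergodicity and accumulation at $Y'$ extracted from Birkhoff genericity via Masur's criterion --- is exactly the content implicit in that ``direct application.''
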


\noindent
We will use the latter criterion in the proof of the main theorem. 
We will do so with $Y = X$.

\section{Proof of the main theorem}

We turn to the proof of the main result, Theorem~\ref{thm:main}.\\
Using the unfolding procedure from Section~\ref{Unfolding:planar:billiard} and the recurrence criterion~\ref{rec:criterion}, we can rephrase the main theorem as follows.

\vspace{5mm}
\noindent
Given a wind-tree model $\mathrm{WT}(\Lambda, \Omega)$ with rational configuration $\Omega$, we construct a new rational configuration $\widehat{\Omega}$
by adding obstacles to $\Omega$. We look at $\mathrm{WT}(\Lambda, \widehat{\Omega})$ and want to show that upon fixing a starting point $p \in WT(\Lambda, \widehat{\Omega})$, 
the billiard flow $\mathcal{F}_{t}^{\theta}(p)$ on $\mathrm{WT}(\Lambda, \widehat{\Omega})$ is recurrent in almost every direction $\theta \in [0,2\pi)$.

As in Section~\ref{Unfolding:planar:billiard}, we can model the billiard flow on $\mathrm{WT}(\Lambda, \widehat{\Omega})$
using the linear flow on an infinite translation surface $X^{\infty}$. 
By Proposition~\ref{prop:wt:cover}, $X^{\infty} = \widetilde{X}_{\overline{\gamma}}$ is a $\mathbb{Z}^2$-cover of a (finite type) translation surface $X$,
$\overline{\gamma} \subset H_1(X,\Sigma,\mathbb{Z})$. 
Let $\mathcal{M}$ denote the $\mathrm{SL}(2,\mathbb{R})$-orbit closure of $(X,\omega)$ in $\mathcal{H}_{1}(\kappa)$.
We show:

\begin{theorem}
\label{thm:main:re}
\vspace{1mm}
\noindent
$\widehat{\Omega}$ can be constructed so that the following hold.
\begin{enumerate}
\item $X$ has an involution $\iota \in \text{Aff}(X,\Sigma)$.
\end{enumerate}
 By Corollary~\ref{cor:quad:hodge:split}, this gives a splitting of the Hodge bundle ${ H_{g}^{1}(\mathcal{M}) = V(\mathcal{M}) \oplus V^{\perp}(\mathcal{M}) }$,
 invariant for the KZ-cocycle. The fibers are $H_{g}^{1} (X) = H_1^{\iota+}(X,\Sigma,\mathbb{R}) \oplus H_1^{\iota-}(X,\Sigma,\mathbb{R})$.
\begin{enumerate}
\setcounter{enumi}{1}
\item The `no drift' condition is satisfied: $\omega([\gamma_i] \in \overline{\gamma}) = 0$.
\item $ \overline{\gamma} \subset  H_1^{\iota+}(X,\Sigma,\mathbb{R})  $.
\item There exists a good cylinder $C$ in $X$ with $m(C) \in  H_1^{\iota-}(X,\Sigma,\mathbb{R})$.
\end{enumerate}
\end{theorem}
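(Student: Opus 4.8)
The plan is to take $\widehat\Omega$ to be $\Omega$ together with a pair of right-angled $L$-shaped obstacles placed inside a small open disk $D\subset\operatorname{int}(\Omega)$ disjoint from all the $O_i$, oriented so that their edges run in two perpendicular directions, one of them parallel to an edge already present in $\Omega$. Aligning one direction with an existing edge keeps every angle difference in $\frac{m}{n}\pi$, so $\widehat\Omega$ is again rational, while the two perpendicular edge directions of the $L$-shapes contribute a difference of exactly $\frac\pi2$; this forces $2\mid\widehat n$, i.e. the new unfolding constant $\widehat n$ is even. Since $\widehat n$ is even, $D_{\widehat n}$ contains the rotation by $\pi$; I take $\iota$ to be this element. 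Then $d\iota=-I$, $\iota^2=I$, and $\iota$ is one of the $\rho_i$, so $\iota\in\operatorname{Aff}(X,\Sigma)$ by Section~\ref{sec:billiard:action}. This gives (1), and Corollary~\ref{cor:quad:hodge:split} then produces the KZ-invariant symplectic splitting $H^1_g(\mathcal M)=V(\mathcal M)\oplus V^\perp(\mathcal M)$ with fibres $H_1^{\iota\pm}(X,\Sigma,\mathbb R)$.

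Properties (2) and (3) should follow from the dihedral averaging already visible in the cover cycles of Proposition~\ref{prop:wt:cover}, for any $\widehat n\ge2$. Writing $\tau=\sum_j a_1^j\tau_{e_j}$ and applying $\operatorname{hol}_\omega$, the linear part of each group element acts on the holonomy, so $\omega([\gamma_1])=\sum_{\theta_i}\theta_i\tau-\sum_{\rho\theta_i}\rho\theta_i\tau$. Here each sum runs over a full set of equally spaced rotations (resp. reflections) of the fixed vector $\tau$, and therefore vanishes for $\widehat n\ge2$. The same holds for $[\gamma_2]$, which is the no-drift condition (2). For (3), left multiplication by the rotation $\iota$ permutes $\{\theta_i\}$ among themselves and $\{\rho\theta_i\}$ among themselves; since in the formula for $[\gamma_l]$ the coefficient $a_l^j$ is attached to the edge $e_j$ and is identical in every copy, $\iota_*$ only reindexes each of the two sums and fixes their signs, so $\iota_*[\gamma_l]=[\gamma_l]$ and $\overline\gamma\subset H_1^{\iota+}(X,\Sigma,\mathbb R)$.

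The core of the argument is the good cylinder (4), and this is where the right angles are essential. Let $r_h,r_v\in D_{\widehat n}$ be the reflections whose axes are the two perpendicular edge directions of the $L$-shapes; since these directions are $\frac\pi2$ apart, $r_hr_v=-I=\iota$, so $K=\{I,r_h,r_v,\iota\}$ is a Klein four-subgroup of $D_{\widehat n}$. I would position the two $L$-shapes inside $D$ so that four of their edges present two facing horizontal and two facing vertical wall segments bounding an open square, leaving the other two corners open so that $\operatorname{WT}(\Lambda,\widehat\Omega)$ stays connected, and so that the inscribed $45^\circ$ (slope one) billiard orbit meeting the midpoints of the four walls is periodic and stays inside $D$. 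This orbit reflects off both wall orientations, so the reflections accumulated along it generate all of $K$; hence the cylinder $C$ it sweeps out in $X$ occupies exactly a coset $Kg_0$ of copies, for some base copy $g_0$. Now every element of $D_{\widehat n}$ acts on $X$ by left multiplication on the copy label while fixing the $P$-coordinate, so $\iota$ sends the piece of $C$ in copy $g$ to the piece of $C$ in copy $\iota g$; as $\iota K=K$, this merely permutes the four copies of $C$ and gives $\iota(C)=C$ as a set. On each copy $\iota$ acts with derivative $-I$, reversing the direction of the core geodesic, whence $\iota_*m(C)=-m(C)$, i.e. $m(C)\in H_1^{\iota-}(X,\Sigma,\mathbb R)$, which is (4).

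With (1)--(4) established, $\overline\gamma\subset V=H_1^{\iota+}$ and $m(C)\in V^\perp=H_1^{\iota-}$ are symplectically orthogonal by formula~(\ref{half:split:hom}), so $C$ lifts isometrically to $\widetilde X_{\overline\gamma}$; Theorem~\ref{rec:criterion} applied with $Y=X$ then gives recurrence in almost every direction, proving Theorem~\ref{thm:main:re} and hence the main theorem. I expect the main obstacle to lie entirely in (4): one has to choose the sizes, orientation and placement of the two $L$-shapes (depending on the model) so that the $45^\circ$ orbit is genuinely periodic, avoids the open corners and all other obstacles, and sweeps a nondegenerate embedded cylinder whose core class $m(C)$ is nonzero; and one must check that this orbit really visits all four copies of $Kg_0$ rather than a two-element subset, since only then is $C$ itself $\iota$-invariant. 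The orientation signs entering the last step are governed by Corollary~\ref{cor:billiard:sign} and should be tracked exactly as in the proof of Proposition~\ref{prop:wt:cover}.
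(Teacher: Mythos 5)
Your items (1)--(3) are exactly the paper's argument: the $\tfrac{\pi}{2}$ angle difference forces an even unfolding constant, $\iota$ is the rotation by $\pi$ in $D_{\widehat n}$, and the dihedral averaging computations for no drift and for $\iota_*\overline{\gamma}=\overline{\gamma}$ are the ones the paper performs. The gap is in item (4), and it is fatal as written: the claim that $\iota(C)=C$ because $\iota$ stabilizes the coset of copies is false. Recall how an element of $D_{\widehat n}$ acts on $X$: in the cartesian description $X=(\Omega\times D_{\widehat n})/\sim$ it sends $(p,g)$ to $(p,\iota g)$, i.e.\ it \emph{fixes the table coordinate} $p$ and only relabels the copy. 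Now trace your $45^{\circ}$ diamond orbit: its pieces over the four copies $g_0,\,g_0r_v,\,g_0\iota,\,g_0r_h$ are the four \emph{different} sides of the diamond in table coordinates (the side from the bottom-wall midpoint to the right-wall midpoint lies over $g_0$, the next side over $g_0r_v$, and so on). Hence $\iota$ sends the piece of $C$ over $g_0$ (lower-right sides) to the same lower-right segments now labelled $g_0\iota$ --- but the piece of $C$ over $g_0\iota$ consists of the upper-left sides. So $\iota(C)\neq C$: the image is the parallel ``companion'' cylinder $C'$ swept by the other family of slope-one closed geodesics (the two cylinders together fill the four squares; on the closed torus obtained by sealing the corner gaps they would merge into a single cylinder, and it is precisely the gaps that separate them). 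Orienting both core curves NE, one gets $\iota_*m(C)=-m(C')$, and in general $m(C')\neq m(C)$: a closed curve that enters the square region through one corner gap, runs along the bottom wall inside copy $g_0$, and leaves through the adjacent gap intersects $m(C)$ once and $m(C')$ zero times. So $m(C)$ has a nonzero component in $H_1^{\iota+}$ and your cylinder is not good. In particular the check you flag at the end --- that the orbit visits all four copies of the coset --- does pass, but it is nowhere near sufficient: an $\iota$-stable set of copy labels does not make the cylinder itself $\iota$-invariant.

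The fix is the paper's key observation, which your construction misses: to get $\iota_*m(C)=-m(C)$ one makes the core geodesic pass through a \emph{fixed point} of $\iota$, and the only fixed points of $\iota$ are vertices whose identification class contains $\iota$. This is exactly why the paper uses the reflex corners $s_1,s_2$ (table angle $\tfrac{\pi}{2}$) of the two L-shapes: after unfolding, such a corner becomes a regular marked point of cone angle $4\cdot\tfrac{\pi}{2}=2\pi$ whose four quarter-neighbourhoods lie over copies $g,\,g\rho_a,\,g\iota,\,g\rho_b$ (where $\rho_a,\rho_b$ are the reflections in the two edges at the corner, $\rho_a\rho_b=\iota$), so the point is fixed by $\iota$. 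Rotating $L_2$ by $\pi$ relative to $L_1$ and joining $s_1$ to $s_2$ by a straight segment $\mathcal{F}(s_1,s_2)$ inside the table, the closed geodesic runs along $\mathcal{F}$ over copy $g$, passes straight through the marked point $s_2$ into copy $\iota g$, and returns along the \emph{same} segment $\mathcal{F}$ (same table coordinates) to close up at $s_1$. Its two pieces are the same subset of $P$ over copies swapped by $\iota$, so it is genuinely $\iota$-invariant as a set with orientation reversed, giving $m(C)\in H_1^{\iota-}(X,\Sigma,\mathbb{R})$. Your right angles are doing real work, but not where you put them: they are needed to create the $\iota$-fixed regular singularities (this is the paper's remark that ``the only fixed points of $\iota$ are vertices''), not to build a Klein four-group of reflections.
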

Using Criterion~\ref{rec:criterion}, we conclude that the linear flow on $\widetilde{X}_{\overline{\gamma}}$, and thus also the billiard flow on $\mathrm{WT}(\Lambda, \widehat{\Omega})$, are recurrent in almost every direction.

\vspace{5mm}
\noindent
We describe the construction of $\widehat{\Omega}$ (see also the following figure).\\
We take two L-shaped polygons, $L_1,L_2$, with corners $s_1,s_2$ of angle $\frac{1}{2}\pi$.
We will put $\widehat{\Omega} = \Omega \backslash \{L_1,L_2\}$.
We want to keep $\widehat{\Omega}$ rational, so we take some $\xi \in \xi_{\text{diff}}(\Omega)$, and rotate $L_1$ by $\xi$.
We rotate $L_2$  by $\xi + \pi$.
We take $L_1,L_2$ small enough and place them close to each other,
so that we can draw a straight line between $s_1$ to $s_2$ inside $\widehat{\Omega}$, $\mathcal{F}(s_1,s_2)$.

\vspace{5mm}
\noindent
There are other constructions that will work as well,
namely any two rational polygons with corners of angle $\frac{1}{2t}\pi$ that can be connected by a straight line in $\widehat{\Omega}$.
We see that $\xi_{\text{diff}}(\widehat{\Omega}) = \xi_{\text{diff}}(\Omega) \cup \{ \frac{\pi}{2} \}$.
So the unfolding constant $n(\widehat{\Omega})$ is even. 
We note that for $n$ even, the dihedral group $D_n$ contains a rotation by $\pi$, which we denote by $\iota$.
As in Section~\ref{sec:billiard:action}, $\iota$ is an involution in $\text{Aff}(X,\Sigma)$. 
This gives \ref{thm:main:re}(1).

Unfolding $P(\widehat{\Omega})$ to $X$, we note that $s_1,s_2$ are regular singularities of $X$.
Indeed $s_1$ is identified with $\rho_ {\xi} (s_1),\iota (s_1)$ and $\rho_ {\xi} \circ \iota (s_1)$,
to give a cone angle of $4 \cdot \frac{\pi}{2} = 2\pi$ (and similarly for $s_2$).
We will consider the geodesic flow going through them.

\vspace{5mm}
\noindent
For the cylinder $C$, we construct a closed geodesic $\gamma_C$ in $X$, which will be the core curve of $C$ (also see Figure~\ref{good:cylinder:fig}).

Go along $\mathcal{F}(s_1,s_2)$. 
Upon hitting the regular singularity $s_2$, the geodesic flow continues through $\iota(s_2)$ and goes along $\iota(\mathcal{F}(s_1,s_2))$.
Upon hitting $\iota(s_1)$ we are back at the start at $s_1$

\begin{figure}
\centering

\tikzset{every picture/.style={line width=0.75pt}} 

\begin{tikzpicture}[x=0.75pt,y=0.75pt,yscale=-1,xscale=1]

\draw  [fill={rgb, 255:red, 23; green, 5; blue, 5 }  ,fill opacity=1 ] (231,41) -- (291,100.33) -- (231,100.33) -- cycle ;
\draw   (221,32.33) -- (301,32.33) -- (301,109.33) -- (221,109.33) -- cycle ;
\draw  [line width=3]  (278,44) -- (288,44) -- (288,55) ;
\draw  [line width=3]  (279,65) -- (269,65) -- (269,54) ;
\draw [color={rgb, 255:red, 255; green, 9; blue, 0 }  ,draw opacity=1 ][line width=1.5]    (272,61.75) -- (283.32,49.22) ;
\draw [shift={(286,46.25)}, rotate = 492.09] [fill={rgb, 255:red, 255; green, 9; blue, 0 }  ,fill opacity=1 ][line width=0.08]  [draw opacity=0] (9.29,-4.46) -- (0,0) -- (9.29,4.46) -- cycle    ;
\draw  [fill={rgb, 255:red, 23; green, 5; blue, 5 }  ,fill opacity=1 ] (401.69,100.82) -- (341.5,39.33) -- (401.69,39.33) -- cycle ;
\draw   (411,109.81) -- (329.5,109.81) -- (329.5,30) -- (411,30) -- cycle ;
\draw  [line width=3]  (353.22,97.71) -- (342.91,97.71) -- (342.91,86.31) ;
\draw  [line width=3]  (352.19,75.95) -- (362.5,75.95) -- (362.5,87.35) ;
\draw [color={rgb, 255:red, 255; green, 9; blue, 0 }  ,draw opacity=1 ][line width=1.5]    (345.97,94.38) -- (357.73,81.29) ;
\draw [shift={(360.41,78.32)}, rotate = 491.95] [fill={rgb, 255:red, 255; green, 9; blue, 0 }  ,fill opacity=1 ][line width=0.08]  [draw opacity=0] (9.29,-4.46) -- (0,0) -- (9.29,4.46) -- cycle    ;

\draw (263,67) node [anchor=north west][inner sep=0.75pt]  [font=\scriptsize,color=blue  ,opacity=1 ] [align=left] {$\displaystyle s_{1}$};
\draw (284,34) node [anchor=north west][inner sep=0.75pt]  [font=\scriptsize,color=blue  ,opacity=1 ] [align=left] {$\displaystyle s_{2}$};
\draw (343.34,62.77) node [anchor=north west][inner sep=0.75pt]  [font=\scriptsize,color=blue  ,opacity=1 ] [align=left] {$\displaystyle \iota ( s_{1})$};
\draw (331.75,98.01) node [anchor=north west][inner sep=0.75pt]  [font=\scriptsize,color=blue  ,opacity=1 ] [align=left] {$\displaystyle \iota ( s_{2})$};
\draw (252,16) node [anchor=north west][inner sep=0.75pt]  [font=\footnotesize] [align=left] {$\displaystyle P$};
\draw (358,14) node [anchor=north west][inner sep=0.75pt]  [font=\footnotesize] [align=left] {$\displaystyle \iota ( P)$};

\end{tikzpicture}
\caption{The closed geodesic $\gamma_C$ in $X(\widehat{\Omega})$.}
\label{good:cylinder:fig}
\end{figure}
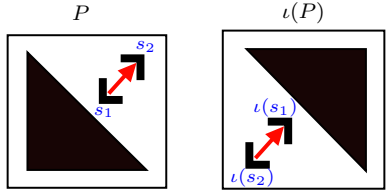

\vspace{5mm}
\noindent
We see that $\iota(\gamma_C) = -\gamma_C$, and so $m(C) = [\gamma_C] \in  H_1^{\iota-}(X,\Sigma,\mathbb{R})$,
giving \ref{thm:main:re}(4).
The idea behind the construction here was to have a closed geodesic passing through a fixed point of $\iota$.
Such a closed geodesic is necessarily $\iota$ anti-invariant.
The problem is that the only fixed points of $\iota$ are vertices. 
For this reason, we needed a regular singularity.

\vspace{5mm}
\noindent
We complete the proof of the theorem.\\
Recall $[\gamma_1] =  \sum_{\theta_i \text{\,rotation}}^{i = 1, \dots, 2k} \, \sum_{j=1}^{d} a_{1}^{j} {\theta_i}_* h\left(e_{j}\right)    -    \sum_{\rho\theta_i \text{\,reflection}}^{i = 1, \dots, 2k} \, \sum_{j=1}^{d} a_{1}^{j} {\rho\theta_i}_* h\left(e_{j}\right)$,
and similarly for $[\gamma_2]$.
\begin{proof}[Proof of Theorem \ref{thm:main:re} items (2) and (3)]
 ~\\
$$\omega(\sum_{\theta_i \text{\,rotation}}^{i = 1, \dots, n} \, \sum_{j=1}^{d} a_{1}^{j} {\theta_i}_* h(e_{j})) = 
\sum_{j=1}^{d}  a_{1}^{j} \sum_{\theta_i \text{\,rotation}}^{i = 1, \dots, n} \omega ({\theta_i}_* h(e_{j}))  = 
\sum_{j=1}^{d}  a_{1}^{j} \sum_{\theta_i \text{\,rotation}}^{i = 1, \dots, n} \theta_i \omega (h(e_{j})). 
$$
But $\sum_{\theta_i \text{\,rotation}}^{i = 1, \dots, n} \theta_i v = 0$ for any $v \in \mathbb{R}^2$, given that $n > 1$.
Similarly, $\sum_{\rho\theta_i \text{\,reflection}}^{i = 1, \dots, n} \rho\theta_i v = \rho(\sum_{\theta_i \text{\,rotation}}^{i = 1, \dots, n} \theta_i v) = 0$.
Thus, $\omega([\gamma_1]) = 0$, and likewise $\omega([\gamma_2]) = 0$.

~\\
Marking $\theta_k = \iota$, we have
 $$\iota_*(\sum_{j=1}^{d}  a_{1}^{j} \sum_{\theta_i \text{\,rotation}}^{i = 1, \dots, n} \theta_{i*} h(e_{j})) = 
 \sum_{j=1}^{d}  a_{1}^{j} \sum_{\theta_i \text{\,rotation}}^{i = 1, \dots, n}\iota_*(\theta_{i*} h(e_{j})) = 
  $$
   $$
 \sum_{j=1}^{d}  a_{1}^{j} \sum_{\theta_i \text{\,rotation}}^{i = 1, \dots, n} \theta_{{i+k(\mathrm{mod}~n)}*} h(e_{j}) = 
 \sum_{j=1}^{d}  a_{1}^{j} \sum_{\theta_i \text{\,rotation}}^{i = 1, \dots, n} \theta_{i*} h(e_{j}).
 $$
 This holds similarly for the second summand, giving $\iota_*[\gamma_1] = [\gamma_1]$, and likewise for $[\gamma_2]$. 
\end{proof}

\vspace{5mm}

\end{document}